\newtheorem{mainthm}{Main Theorem}
\newtheorem{theorem}{Theorem}[section]
\newtheorem{lemma}[theorem]{Lemma}
\newtheorem{proposition}[theorem]{Proposition}
\newtheorem{corollary}[theorem]{Corollary}
\newtheorem*{definition}{Definition}
\newtheorem*{remark}{Remark}
\newcommand{\norm}[3]{{\mathbf{N}}_{{#1}/{#2}}({#3})}
\newcommand{\congruente}[3]{#1 \equiv #2 \pmod{#3}}
\newcommand{\ncongruente}[3]{#1 \not\equiv #2 \pmod{#3}}
\newcommand{\legendre}[2]{\ensuremath{\left(\frac{#1}{#2}\right)}}
\providecommand{\keywords}[1]
{
	\small	
	\textbf{\textit{Keywords:}} #1
}
\providecommand{\subjclass}[1]
{
	\small	
	\textbf{\textit{MSC2020:}} #1
}
\title{Undecidability of infinite algebraic extensions of $\mathbb{F}_p(t)$}
\author{
    Carlos Martínez-Ranero\\
    Dubraska Salcedo\\
    Javier Utreras\\
    {\small Departamento de Matemática}\\
    {\small Universidad de Concepción}
}
\date{}
\begin{document}

\maketitle

\begin{abstract}
    Building on work of J. Robinson and A. Shlapentokh, we develop a general framework to obtain definability and decidability results of large classes of infinite algebraic extensions of $\mathbb{F}_p(t)$. As an application, we show that for every odd rational prime $p$ there exist infinitely many primes $r$ such that the fields  $\mathbb{F}_{p^a}\left(t^{r^{-\infty}}\right)$ have undecidable first-order theory in the language of rings without parameters. Our method uses character theory to construct families of non-isotrivial elliptic curves whose Mordell-Weil group is finitely generated  and of positive rank in $\mathbb{Z}_r$-towers.\footnote{ The first named author was partially supported by  Proyecto VRID-Investigación  No. 220.015.024-INV\\ The second named author was supported by Beca ANID Doctorado Nacional 21200661 \\The third named author was supported by Proyecto VRID 2022000419-INI}
\end{abstract}
\keywords{Decidability, Function fields, Elliptic curves}


\subjclass[MSC Classification]{11U05, 11G05, 03C40, 03D35}
\section{Introduction}\label{sec1}

Though the search for systematic procedures or algorithms to solve special type of equations has been considered since antiquity, it became one of the central topics of research in the 20th century  driven by some famous questions posed by D. Hilbert (e.g. Hilbert's 10th problem and Hilbert’s Entscheidungsproblem)and by the pioneering work of K. G\"odel, A. Church and A. Turing in the theoretical development of the notion of computability.

Notably, B. Rosser \cite{rosser}, building on the celebrated result of K. G\"odel \cite{godel}, showed that the first-order theory of $\mathbb{Z}$ is undecidable, i.e. there is no algorithm (Turing machine) which can answer every \emph{arithmetic} (yes/no)-question correctly. This undecidability result was extended to other number theoretic objects like rings of integers and number fields by J. Robinson \cite{robinson59}. On the other hand, at around the same time A. Tarski \cite{tarski2} showed that the rings $\mathbb{Q}^{\rm alg}$ and $\mathbb{Q}^{\rm alg}\cap \mathbb{R}$ are decidable. This was extended later on by J. Ax and S. Kochen \cite{axkochen} to the field $\mathbb{Q}^{\rm alg}\cap \mathbb{Q}_p$ of $p$-adic algebraic numbers. Thus the following question, first considered by J. Robinson in \cite{robinson49}, naturally arises: 

\begin{quote}
    \textbf{Problem.} Which (possibly infinite) algebraic extensions of $\mathbb{Q}$ have an undecidable first-order theory?
\end{quote}

The question for infinite algebraic extensions is more subtle as from the aforementioned results of A. Tarski it follows that there exist extensions whose first-order theory is decidable. Intuitively, fields close to ${\mathbb{Q}^{\rm alg}}$ are expected to be decidable, whereas fields closer to $\mathbb{Q}$ are expected to be undecidable.

J. Robinson showed in \cite{robinson59} that the first-order theory of the ring of all totally real algebraic integers is undecidable and produced a "blueprint" for rings of integers which are not necessarily totally real. Later on, building on work of R. Rumely \cite{rumely}, C. Videla \cite{videlaprop} give a definition of the ring of integers on pro-$p$ Galois extensions. This was extended by K. Fukuzaki \cite{fukuzaki} and A. Shlapentokh \cite{sasha}. By combining J. Robinson's blueprint with the definability results of C. Videla, K. Fukuzaki and A. Shlapentokh, several authors have produced families of infinite algebraic extensions of $\mathbb{Q}$ with undecidable first order theory (see \cite{videlaelliptic},\cite{vv}, \cite{MUV},\cite{caleb}).

The aim of this paper is to extend the aforementioned results to the context of infinite algebraic extensions of $\mathbb{F}_p(t)$. It is worth pointing out that the undecidability of the first-order theory of rings of positive characteristic has been a very active area of research, mostly linked with the analogue of Hilbert's tenth problem for function fields. Hilbert’s Tenth Problem is known to be undecidable for global function fields (see \cite{pheidas}, \cite{videlac2}, \cite{sashaff}, \cite{eisentrager}), and the first-order theory of any function field of positive characteristic in the language of rings without parameters is undecidable \cite{kirstensasha}. The undecidability problem for infinite algebraic extensions of $\mathbb{F}_p(t)$, to the best of the authors' knowledge, has not being touched.

To begin, we build upon the foundational work of J. Robinson and C. W. Henson (unpublished, refer to \cite{vandendries}) to establish a highly versatile framework for proving the undecidability of a wide range of commutative rings that encompass meaningful arithmetic. Notably, this framework applies seamlessly to rings of algebraic integers and, in a broader context, to rings of $S$-integers, serving as illustrative prototypes.

\begin{mainthm}[Theorem \ref{thm:henson}] \label{mainthm:henson}  
    Let $S$ be an infinite commutative domain satisfying the following properties:
    \begin{itemize}
        \item For any finite set $A\subseteq S$ there exists an element $x$, neither zero nor a unit, such that $(x)+(a_i)=S$ for every $a_i\in A\setminus\left\{0\right\}$.
        \item For any finite set $A\subseteq S$ that does not contain zero and any $a\in S$ that is neither a zero nor a unit there exists an element $g\in S$ such that
        \begin{itemize}
            \item for any $a_i\in A$, $1+a_ig$ is neither zero nor a unit;
            \item for any $a_i\in A$, $(a)+(1+a_ig) = S$; and
            \item for distinct $a_i,a_j\in A$, $(1+a_ig) + (1+a_jg) = S$.
        \end{itemize} 
    \end{itemize}
    Consider a first--order structure on $S$ (which we will also denote with $S$) extending the ring structure.
    If there exists a uniformly parametrizable family of subsets of $S$ in this structure such that it contains sets of arbitrarily large finite cardinalities, then there exists a model of the theory $R$ of Tarski, Mostovski and Robinson interpretable in $S$.
    Hence the theory of $S$ is undecidable.
\end{mainthm}

Next we extend the results of A. Shlapentokh to the positive characteristic and give a general criterion to define the ring of integers of an algebraic extension of a function field.

\begin{mainthm}[Theorem \ref{thm:sasha}] \label{mainthm:sasha}
    Let $K$ be a finite separable geometric field extension of $\mathbb{F}_{p^a}(t), $ and let ${\mathscr S}_{K}$ be a finite nonempty set of primes of $K$. Let $\ell, \ell'$ be rational primes (not necessarily different) and coprime to the characteristic of $\mathbb{F}_{p^a}.$  Let $K_{\rm inf}$ be an infinite algebraic extension of $K$. Suppose all primes of $K$ are hereditarily $\ell$-bounded and all primes in ${\mathscr S}_K$  are completely $\ell'$-bounded, then the integral closure $\mathcal{O}_{K_{\rm inf},\mathscr{S}_{{\rm inf}}}$ of $\mathcal{O}_{K,{\mathscr S}_K}$ in $K_{\rm inf}$ is first-order definable with parameters. 
\end{mainthm}

The proof is a straightforward adaptation of the methods of \cite{sasha} to the function field case. However, in order to not place the burden of proof on the reader, we shall provide a streamlined proof. 

Finally, as an application we show the following.

\begin{mainthm}[Theorem \ref{thm:main}] \label{mainthm:main}
    Let $p$ be an odd prime, $a$ a positive integer and $r$ a prime congruent to $3$ $\pmod{4}$ and such that $p$ is a quadratic residue modulo $r$.

    The first-order theory of the structure $\mathbb{F}_{p^a}\left(t^{r^{-\infty}}\right)$ in the language of rings $\{0,1,+,\times\}$ interprets the theory R of Tarski, Mostowski and Robinson. In particular, it is undecidable.
\end{mainthm}

Our method uses character theory and results by R. Conceição, C. Hall and D. Ulmer \cite{ulmerleg2} to construct families of non-isotrivial elliptic curves whose Mordell-Weil
group is finitely generated and of positive rank in $\mathbb{Z}_r$-towers.

The structure of the paper is as follows. Section \ref{sec:henson} details the proof of Main Theorem \ref{mainthm:henson}. Main Theorem \ref{mainthm:sasha} is covered in Section \ref{sec:sasha}. In Section~\ref{sec:ulmer} we construct a family of definable sets that satisfies the criterion given by Main Theorem~\ref{mainthm:henson} using results on a specific elliptic curve over a function field by R. Conceição, C. Hall and D. Ulmer \cite{ulmerleg2}. Finally, Main Theorem \ref{mainthm:main} is proved in Section~\ref{sec:mainthm}.

It is worth mentioning that Sections~\ref{sec:henson}, \ref{sec:sasha} and \ref{sec:ulmer} can be read independently. Furthermore, if one is willing to take the main theorems from each of these sections for granted, then Section~\ref{sec:mainthm}, which consists of the proof of Main Theorem~\ref{mainthm:main}, can also be read on its own.

\section{A criterion for undecidability of rings} \label{sec:henson}

The aim of this section is to prove the first Main Theorem. In order to do so, we will seek to interpret a known essentially undecidable theory.

\subsection{The theory $R$ of Tarski, Mostwoski and Robinson}

Let $\mathcal{L}_{\rm arit}=\{0,S,+,\cdot\}$ be the language of arithmetic. Define recursively, for each $n$, $\Delta_0=0$ and $\Delta_{n+1}=S(\Delta_n)$. Write $\alpha \leq \beta$ instead of $\exists w (w+\alpha=\beta)$, where $\alpha, \beta$ are arbitrary terms and $w$ is any variable which occurs in neither $\alpha$ nor $\beta$.

In \cite{tarski} A. Tarski, A. Mostwoski and R. Robinson consider the theory $R$ given by the following axiom instances for any natural integers $n, m$:

\begin{enumerate}
    \item[$\Omega_1.$] $\Delta_n+\Delta_m=\Delta_{n+m};$
    \item[$\Omega_2.$] $\Delta_n \cdot \Delta_m=\Delta_{nm};$
    \item[$\Omega_3.$] $\Delta_n \ne \Delta_m$ for $n\ne m;$
    \item[$\Omega_4.$] $\forall x\,\left(x\leq \Delta_n \implies x=\Delta_0\vee\dots\vee x=\Delta_n\right)$;
    \item[$\Omega_5.$] $\forall x\,\left(x\leq \Delta_n \vee \Delta_n\leq x\right)$.
\end{enumerate}

\begin{theorem} [\cite{tarski}, Theorem II.9]
    Any model of $R$ has undecidable theory.
\end{theorem}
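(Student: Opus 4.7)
The plan is to establish the stronger fact that $R$ is \emph{essentially undecidable}, meaning every consistent extension of $R$ is undecidable. Since for an arbitrary model $M \models R$ the complete theory $\mathrm{Th}(M)$ is a consistent extension of $R$, this gives the stated conclusion.

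The proof divides into two standard ingredients. First, a \emph{representation theorem}: every recursive subset $A \subseteq \mathbb{N}$ (and more generally every recursive function) is representable in $R$, in the sense that there is a formula $\varphi_A(x)$ of $\mathcal{L}_{\rm arit}$ such that $n \in A$ implies $R \vdash \varphi_A(\Delta_n)$ and $n \notin A$ implies $R \vdash \neg\varphi_A(\Delta_n)$. Second, the classical recursion-theoretic fact that there exist disjoint recursively enumerable sets $A, B \subseteq \mathbb{N}$ which cannot be separated by any recursive set (for instance, standard index sets arising from the halting problem). Combining these, one represents such a pair by a single formula $\psi(x)$ satisfying $n \in A \Rightarrow R \vdash \psi(\Delta_n)$ and $n \in B \Rightarrow R \vdash \neg\psi(\Delta_n)$. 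For any $M \models R$, the set $\{n \in \mathbb{N} : M \models \psi(\Delta_n)\}$ then separates $A$ from $B$; if $\mathrm{Th}(M)$ were decidable this separator would be recursive, contradicting inseparability.

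Carrying out the representation theorem is the bulk of the work. I would proceed by induction along the Kleene hierarchy of recursive functions: projections and successor are trivial, composition is straightforward, and the work concentrates on primitive recursion and unbounded search. The axioms $\Omega_1$ and $\Omega_2$ handle all concrete numerical instances of $+$ and $\cdot$; $\Omega_3$ ensures that distinct numerals are provably distinct, which is what allows negative instances to be derived. The axioms $\Omega_4$ and $\Omega_5$ are the crucial ingredients for simulating bounded quantification in a theory with no induction schema: $\Omega_5$ forces every element to be comparable with each numeral $\Delta_n$, while $\Omega_4$ collapses $x \le \Delta_n$ to the finite disjunction $x = \Delta_0 \vee \cdots \vee x = \Delta_n$. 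Together they let $\forall x\le \Delta_n$ and $\exists x \le \Delta_n$ be replaced by finite conjunctions and disjunctions whose instances are decidable from $\Omega_1$--$\Omega_3$. Primitive recursion and minimization require a coding of finite sequences; one can introduce a Gödel $\beta$-function-style device, expressible in $\{0,S,+,\cdot\}$, and verify that the needed instances of its defining equations follow from $\Omega_1$--$\Omega_5$.

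The main obstacle is that $R$ is deliberately very weak: there is no induction principle and no universal arithmetic statements are available, so the representation argument must produce, for every recursive $f$ and every tuple of arguments, an individual proof from $R$ of the correct instance and of the uniqueness of the output. Verifying that the $\beta$-coding and the handling of bounded quantifiers survive at this instance level is the delicate point; once this is done, the passage from essential undecidability to undecidability of the theory of any model $M \models R$ is the short argument in the previous paragraph.
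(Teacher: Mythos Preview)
The paper does not prove this theorem at all: it is stated as a citation of \cite{tarski}, Theorem II.9, with no argument given, since it is a classical result being invoked as a black box. Your sketch is a faithful outline of the standard proof of essential undecidability of $R$ (representability of recursive functions via $\Omega_1$--$\Omega_5$, then recursively inseparable r.e.\ sets), and nothing in it is wrong as a plan; but there is no ``paper's own proof'' to compare it against here.
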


\subsection{An undecidability criterion for rings}

\begin{definition}
    Let $S$ be any ring and let $\mathcal{F}\subseteq \mathcal{P}(S)$. We say that $\mathcal{F}$ is a uniformly parametrizable family of subsets of $S$ if and only if there exist a definable set $C\subseteq S^k$ (the parameter set) and a first-order formula $\varphi(x,y_1,\dots,y_k)$ (the formula of definition) such that for any $F\subset S$
    \[
    F\in \mathcal{F} \Longleftrightarrow \exists\, \overline{y}\in C\, \forall x\in S\, \left[x\in F \Longleftrightarrow \varphi(x,\overline{y})\right]. 
    \]
\end{definition}

We show the following general criterion:

\begin{theorem} \label{thm:henson}
    Let $S$ be an infinite commutative domain satisfying the following properties:
    \begin{itemize}
        \item For any finite set $A\subseteq S$ there exists an element $x$, neither zero nor a unit, such that $(x)+(a_i)=S$ for every $a_i\in A\setminus\left\{0\right\}$.
        \item For any finite set $A\subseteq S$ that does not contain zero and any $a\in S$ that is neither a zero nor a unit there exists an element $g\in S$ such that
        \begin{itemize}
            \item for any $a_i\in A$, $1+a_ig$ is neither zero nor a unit;
            \item for any $a_i\in A$, $(a)+(1+a_ig) = S$; and
            \item for distinct $a_i,a_j\in A$, $(1+a_ig) + (1+a_jg) = S$.
        \end{itemize} 
    \end{itemize}

    Consider a first--order structure on $S$ (which we will also denote with $S$) extending the ring structure.

    If there exists a uniformly parametrizable family of subsets of $S$ in this structure such that it contains sets of arbitrarily large finite cardinalities, then there exists a model of the theory $R$ interpretable in $S$.

    Hence the theory of $S$ is undecidable.
\end{theorem}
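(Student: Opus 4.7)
The plan is to interpret the theory $R$ in $S$ by using the uniformly parametrizable family $\mathcal{F}$ together with the two coprimality hypotheses to simulate a Chinese Remainder style encoding of finite subsets, so that the interpreted natural numbers become equivalence classes of such subsets under equipotence.

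First I would set up the encoding. Given any $F\in\mathcal{F}$ of finite cardinality (which we may assume avoids $0$), I apply the first hypothesis with $A=F$ to produce a nonzero non-unit $a\in S$ coprime to every element of $F$, and then the second hypothesis to $F$ and this $a$ to produce $g\in S$ such that each element $b(f):=1+fg$, for $f\in F$, is a nonzero non-unit coprime to $a$, and such that the $b(f)$'s are pairwise coprime. Each of these conditions is first-order (existence of a B\'ezout witness), so the predicate ``$(F,a,g)$ is admissible'' is definable uniformly in the parameters defining $F$. Because $S$ is a commutative domain and the $b(f)$'s are pairwise coprime, the Chinese Remainder Theorem yields, for every subset $J\subseteq F$, an element $x\in S$ with $b(f)\mid x$ precisely for $f\in J$. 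Thus an admissible tuple $(F,a,g,x)$ encodes a definable subset $J(F,a,g,x):=\{f\in F:b(f)\mid x\}\subseteq F$.

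Next I would construct the interpretation. Its domain consists of admissible tuples, and two tuples are declared equivalent when their encoded subsets are equipotent. Equipotence (and the preorder $|J_1|\le|J_2|$) is rendered first-order by asserting the existence of $y\in S$ whose residues modulo pairwise coprime moduli, built from products $b^{(1)}(f_1)\cdot b^{(2)}(f_2)$ (to which the hypotheses can be reapplied to obtain genuine pairwise coprimality on the $N_1\cdot N_2$ indexed family), describe the graph of an injection (resp.\ bijection) between the two subsets. The interpreted $\Delta_0$ is the class of any admissible tuple with $x$ a unit (so $J=\emptyset$); addition is realized by exhibiting a tuple whose encoded subset admits a disjoint decomposition into two pieces equipotent to $J_1$ and $J_2$; multiplication is realized analogously by a cartesian product construction on a doubly indexed subset; and $\le$ is the injection predicate above.

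Finally I would verify the axioms $\Omega_1$--$\Omega_5$ in turn. $\Omega_1$ and $\Omega_2$ hold by the standard combinatorial identities for disjoint unions and products, witnessed by explicit admissible tuples. $\Omega_3$ holds because an injection cannot exist in both directions between finite sets of different cardinalities, provided both numerals are actually represented---here we crucially use that $\mathcal{F}$ contains sets of arbitrarily large finite cardinality, so that every $\Delta_n$ has a witness. $\Omega_4$ follows by induction on $n$: any encoded subset of size at most $n$ is put in CRT-constructed bijection with one of $\emptyset,\{f_1\},\ldots,\{f_1,\ldots,f_n\}$. $\Omega_5$ follows because any two finite subsets of a common ambient set are comparable by cardinality, and we can always embed two admissible tuples into a common one by enlarging $F$ via $\mathcal{F}$. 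The main obstacle throughout will be making equipotence, addition, and multiplication uniformly first-order, with no hidden appeal to external cardinality; the hypotheses on $\mathcal{F}$ and on pairwise coprimality are precisely what allow a single element $y\in S$ to play the role of any desired function between encoded subsets, and allow the construction to iterate to doubly (or higher) indexed families of moduli as needed.
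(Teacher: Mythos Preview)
Your overall strategy---encode finite subsets by Chinese Remainder data, define a first-order ``equipotence'' predicate via an element of $S$ coding the graph of an injection, and build $+,\times$ from disjoint union and product---is exactly the approach the paper takes. The paper's Lemma on the parametrized family $\mathcal{C}_{\mathrm{fill}}$ is your subset-encoding step, and its sentence $\Psi$ in the extended language is your CRT-coded injection.

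However, there is a genuine gap. You implicitly assume that every admissible tuple $(F,a,g,x)$ encodes a \emph{finite} subset $J$, and that your equipotence predicate is a true equivalence relation on which $+$ and $\times$ are well-defined. Neither is forced by the hypotheses: the family $\mathcal{F}$ may contain infinite sets, and nothing prevents an infinite $F$ from admitting $a,g$ satisfying your admissibility conditions (the theorem only guarantees existence for finite $A$, not failure for infinite $A$). On such nonstandard tuples your CRT-coded ``bijection'' predicate is just some first-order formula with no controlled behaviour: it need not be transitive, and the graphs $\Gamma(+),\Gamma(\times)$ need not respect it, so the quotient is not even well-defined. Even if you patch well-definedness, axiom $\Omega_5$ becomes a real problem: for a nonstandard class $[J]$ you must produce $[K]$ with $[K]+[\Delta_n]\sim[J]$, and nothing in your setup guarantees such a $K$ exists inside the family.

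The paper devotes most of its proof to exactly this issue. It first establishes what its injection predicate $\psi$ does and does not say when one argument is infinite, then performs a sequence of \emph{definable} restrictions of the parameter set---to $\mathcal{C}_{\mathrm{equiv}}$ where $\sim$ becomes transitive, to $\mathcal{C}_{+}$ and $\mathcal{C}_{+,\times}$ where the graphs of $+$ and $\times$ respect $\sim$, and to $\mathcal{C}_{\mathrm{bound}}$ so that subtraction by a finite class is always witnessed (this is what rescues $\Omega_5$)---checking at each stage that no finite set is lost. Finally it adjoins $S$ itself as a terminal ``overflow'' class so that $+$ and $\times$ become total. Your sketch would need to incorporate an analogous tower of restrictions before the verification of $\Omega_1$--$\Omega_5$ goes through.
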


The proof of this theorem will require a number of definitions. Until the end of this proof, let $S$ be a ring satisfying the conditions of the statement, and let $\mathcal{C}$ be a uniformly parametrizable family of subsets of $S$ that contains sets of arbitrarily large finite cardinalities.

\subsubsection{Sketch of the proof}

As the proof might get quite dense, we will provide an overview of the steps we will later follow: we will aim to interpret a model of the theory $R$ as a quotient of our parametrizable family; a first approach would be to make it so that each class consists of sets of a fixed cardinality. This, of course, may be impossible, as ``existence of a bijection between two given definable sets'' does not seem to be first--order definable. However, we will ensure that it behaves nicely for finite sets by carrying out the following steps:

\begin{itemize}
    \item Add sets to the family $\mathcal{C}$ so that it contains sets of every finite cardinality and does not contain $S$; this generates a new parametrized family $\mathcal{C}_{0}$. (cf. Lemma~\ref{def:cfill} and  Definition~\ref{def:c0})
    \item Define a binary relation $\psi$ between sets of $\mathcal{C}_{0}$. This relation will behave like ``less or equal cardinality than'' when involving a finite set. (cf. Lemma~\ref{lem:definject})
    \item Using this relation, define a relation $\sim$ between sets of $\mathcal{C}_{0}$. Between finite sets, this will behave like ``equal cardinality''. (cf. Definition~\ref{def:cardequiv})
    \item Restrict the parameter set of our parametrized family to obtain a smaller family $\mathcal{C}_{\rm equiv}$ where $\sim$ is an equivalence relation. (cf. Definition~\ref{def:Cequiv})
    \item Define a ternary relation between sets of $\mathcal{C}_{\rm equiv}$. Our aim is to have this relation satisfy: $(A,B,C)$ are related iff $|A|+|B|=|C|$. This will not always be the case, but it will be satisfied for finite sets. (cf. Definitions~\ref{def:equivplus} and \ref{def:Cplus})
    \item Do the same for a ternary relation that should behave like multiplication of cardinalities. Once again, this will hold for finite sets (cf. Definition~\ref{def:equivtimes}).
    \item Restrict the parameter set of our parametrized family to obtain a smaller family $\mathcal{C}_{+,\times}$ where the two ternary relations respect $\sim$. (cf. Definition~\ref{def:Cplustimes})
\end{itemize}

At this point, the quotient set of our family modulo $\sim$ will look like a structure with addition and multiplication. There will be two final adjustments to make:

\begin{itemize}
    \item Restrict the parameter set of our parametrized family to obtain a smaller family $\mathcal{C}_{\rm bound}$ where for any $A$ finite and $B$ infinite there exists $C$ infinite such that $C+A=B$ (this is required as an axiom of the theory $R$). (cf. Definition~\ref{def:Cbound} and Proposition~\ref{prop:finite_substraction}) 
    \item By this point, the operations may be only partial. We include a placeholder ``infinite'' set in our family such that any still undefined addition or multiplication returns that set. (cf. Definition~\ref{def:Cbar})
\end{itemize}

\subsubsection{Filling up the family to include all finite cardinalities}

\begin{lemma} \label{def:cfill}
    There exists a uniformly parametrizable family $\mathcal{C}_{\rm fill}$ of subsets of $S$ such that $\mathcal{C}\subseteq \mathcal{C}_{\rm fill}$ and $\mathcal{P}(F)\subseteq\mathcal{C}_{\rm fill}$ for any finite $F\in\mathcal{C}$.
\end{lemma}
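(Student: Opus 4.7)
The plan is to enrich the defining formula of $\mathcal{C}$ with extra divisibility-based parameters that will let us carve out arbitrary subsets of a given finite $F\in\mathcal{C}$. Writing $\mathcal{C}$ with parameter set $C$ and formula $\varphi(x,\overline{y})$, I would take
\[
\varphi_{\rm fill}(x,\overline{y},g,z,b)\;:\;\varphi(x,\overline{y})\wedge\bigl[\bigl(x\neq 0\wedge\exists u\,((1+xg)u=z)\bigr)\vee\bigl(x=0\wedge b\neq 0\bigr)\bigr],
\]
with parameter set $C\times S^3$, and declare $\mathcal{C}_{\rm fill}$ to be the family so defined.

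To verify $\mathcal{C}\subseteq\mathcal{C}_{\rm fill}$, I would observe that the choice $(g,z,b)=(0,1,1)$ trivialises the bracketed clause: $(1+xg)=1$ divides $z=1$ for every $x\neq 0$, and the disjunct at $x=0$ contributes $0$ exactly when $\varphi(0,\overline{y})$ holds. So the extended formula cuts out the same set as $\varphi(x,\overline{y})$.

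For the power-set clause, fix a finite $F=\{a_1,\dots,a_n\}\in\mathcal{C}$ and a subset $F'\subseteq F$, and set $F_0=F\setminus\{0\}$. The key step is a chinese-remainder-style construction: assuming $F_0\neq\emptyset$, I would chain the two hypotheses of Theorem~\ref{thm:henson}, first applied to $A=F_0$ to produce a nonzero non-unit $x_0$ coprime to every $a_i\in F_0$, then applied again with $A=F_0$ and $a=x_0$, obtaining $g\in S$ such that the elements $b_i:=1+a_ig$ (for $a_i\in F_0$) are pairwise coprime and none is zero or a unit. Setting $z=\prod_{a_i\in F'\cap F_0}b_i$ (with $z=1$ for the empty product) and $b=1$ or $b=0$ according to whether $0\in F'$, pairwise coprimality delivers $b_i\mid z\iff a_i\in F'\cap F_0$, non-unitness of the $b_i$ ensures $b_i\nmid 1$ in the empty-product case, and the $b$-clause handles the point $0$ separately. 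The degenerate cases $F\subseteq\{0\}$ reduce to simply toggling $b$.

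The main obstacle is the asymmetry at $x=0$: since $1+0\cdot g=1$ divides every element of $S$, the divisibility clause alone is powerless to exclude $0$ from the subset being carved out, which is exactly what forces the auxiliary parameter $b$ into the formula. Apart from that bookkeeping, the argument is a routine chinese-remainder computation powered by the coprimality guaranteed by the two hypotheses of Theorem~\ref{thm:henson}.
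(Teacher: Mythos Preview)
Your proof is correct and follows essentially the same approach as the paper: both enrich the defining formula of $\mathcal{C}$ with a divisibility clause $(1+xg)\mid z$ and invoke the coprimality hypothesis of Theorem~\ref{thm:henson} to carve out an arbitrary subset of a finite $F\in\mathcal{C}$. The only cosmetic difference is in the handling of the point $x=0$ (where $1+0\cdot g=1$ divides everything): you introduce a boolean toggle parameter $b$, whereas the paper instead appends a clause $x\neq y_{k+3}$ that optionally excludes a single designated point.
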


\begin{proof}
    Let $\varphi(x,y_1,\dots,y_k)$ be the uniform parametrization of $\mathcal{C}$. Consider the formula $\Phi(x,y_1,\dots,y_{k+3})$ defined by
    \[
    \varphi(x,y_1,\dots,y_k)\, \wedge\, \left(1+xy_{k+1} \mid y_{k+2}\right) \,\wedge \,\left(x\neq y_{k+3}\right).
    \]

    We claim that the family $\mathcal{C}_{\rm fill}$ parametrized by $\Phi$ is as required. Let $b_1,\dots,b_k$ be such that $\varphi(S,b_1,\dots,b_k)=\{a_1,\dots,a_n\}=F$ is finite, and let $G\subset F.$ We need to show that for some parameters $b_{k+1},b_{k+2}$ and $b_{k+3}$ we have that
    \[
    \psi(S,b_1,\dots,b_{k+3})=G.
    \]

    By the hypothesis on $S$, we can choose $g$ such that the ideals $I_i=(1+a_ig)$ are pairwise comaximal and the elements $1+a_ig$ are neither zero nor units for any $i=1,\dots,n$, except if $a_i=0$. Set $b_{k+1}=g, b_{k+2}=\prod\limits_{a_i\in G}(1+a_ib_{k+1})$ and $b_{k+3}=0$ if $0\not\in G$, or $b_{k+3}\not\in G$ otherwise. Note that if $a_j\in F\setminus G$, then $1+a_jb_{k+1}\notin \bigcap\limits_{a_i\in G} I_i=\left(\prod\limits_{a_i\in G}\left(1+a_ib_{k+1}\right)\right)$, thus it follows that
    \[
    x\in G \Longleftrightarrow \varphi(x,b_1,\dots,b_k) \wedge 1+xb_{k+1} \mid b_{k+2} \wedge x\ne b_{k+3}. 
    \]
\end{proof}

\begin{definition} \label{def:c0}
    The uniformly parametrizable family $\mathcal{C}_0$ is defined as $\mathcal{C}_{\rm fill}\setminus\{S\}$.
\end{definition}

\subsubsection{Definable uniform injection}

Extend the language of the structure with two unary predicate symbols $P_1$, $P_2$. For any $A,B \subseteq S$, let $S_{A,B}$ be the structure extending $S$ to this new language, where $P_1$ is interpreted as belonging to $A$ and $P_2$ is interpreted as belonging to $B$. To simplify notations, we will write $P_1$ as ``$\in A$'' and $P_2$ as ``$\in B$'' where possible.

\begin{lemma} \label{lem:definject}
    There exists a first--order sentence $\Psi$ in the extended language described above satisfying the following properties:
    \begin{itemize}
        \item if $A\subseteq B$, then $S_{A,B}\models\Psi$;
        \item if $A$ is a finite subset of $S$ and $B$ is any subset of $S$, then
        \[
            S_{A,B}\models \Psi \Leftrightarrow \left|A\right|\le\left|B\right|;
        \] 
        \item if $A$ is infinite and $B$ is finite, then $S_{A,B}\not\models \Psi$.
    \end{itemize} 
\end{lemma}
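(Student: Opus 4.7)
The plan is to take $\Psi$ as the disjunction $(\forall x\,(x\in A\to x\in B))\,\vee\,\Psi'$, where the first disjunct handles the trivial case $A\subseteq B$ directly, and
\[
\Psi' \;\equiv\; \exists g\,\exists d\;\forall x\in A\;\exists y\in B\;\bigl[(1+xg)\mid(d-y)\,\wedge\,\forall x'\in A\,\bigl((x'\neq x\wedge x'\neq 0)\to(1+x'g)\nmid(d-y)\bigr)\bigr]
\]
encodes an injection $A\hookrightarrow B$ via ring divisibility. The inner clause forces each witness $y_x\in B$ to lie in the residue class $d\pmod{1+xg}$ and in no other such class indexed by a non-zero $x'\in A$, which makes the assignment $x\mapsto y_x$ automatically injective: if $y_{x_1}=y_{x_2}$ for distinct $x_1,x_2\in A$, then at least one of them (say $x_1$) is non-zero, and the ``no foreign class'' clause for $y_{x_2}$ applied at $x'=x_1$ contradicts the divisibility clause for $y_{x_1}$. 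The exclusion ``$x'\neq 0$'' is a small technical adjustment to prevent the trivial divisor $1+0\cdot g=1$ from making the non-divisibility vacuously false.

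For verification, the case $A\subseteq B$ is covered by the first disjunct. Whenever $\Psi$ holds, the extracted injection shows $|A|\le|B|$, handling both the forward direction of the finite-$A$ equivalence and the failure of $\Psi$ when $A$ is infinite and $B$ is finite. For the converse, assume $A=\{a_1,\dots,a_n\}$ is finite with $|A|\le|B|$ and $A\not\subseteq B$; pick distinct targets $b_1,\dots,b_n\in B$ and let $u$ be any non-zero non-unit in $S$ (existing by hypothesis~(1) of Theorem~\ref{thm:henson}). Setting $a:=u\cdot\prod_{i<j}(b_i-b_j)$, itself a non-zero non-unit, I apply hypothesis~(2) to $A\setminus\{0\}$ with this $a$ to produce $g$ such that the $1+a_ig$ (for $a_i\neq 0$) are pairwise comaximal non-units and each coprime to every $b_i-b_j$; the Chinese Remainder Theorem then yields $d$ with $d\equiv b_i\pmod{1+a_ig}$ for each $a_i\neq 0$. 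Taking $y=b_i$ as the witness for $x=a_i$ verifies both halves of the inner clause: the divisibility by construction of $d$, and the non-divisibility for $x'=a_j\neq 0$ since the non-unit $1+a_jg$ is coprime to $(d-b_i)\equiv b_j-b_i\pmod{1+a_jg}$ and hence cannot divide it.

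The main subtlety is the design of the ``no foreign class'' conjunct. A weaker formulation asking only that the fibers $\{y\in B:(1+a_ig)\mid(d-y)\}$ be pairwise disjoint inside $B$ would be insufficient, since for infinite $B$ one can always find (by CRT applied in the ambient ring) elements of $B$ simultaneously congruent to $d$ modulo several of the $1+a_ig$, which would break the extracted injection. Forcing each witness $y$ to avoid every foreign residue class bypasses this obstruction, and in the finite-$A$ case is arrangeable precisely through the coprimality with target differences guaranteed by hypothesis~(2), with hypothesis~(1) entering only as a subsidiary tool to ensure that the element $a$ fed into hypothesis~(2) is itself a non-unit.
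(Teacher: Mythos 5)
Your proposal is correct, and while it follows the same overall strategy as the paper (a disjunct for $A\subseteq B$ plus an existential sentence encoding an injection into $B$ via residue classes modulo elements $1+xg$, verified with hypotheses (1)--(2) of Theorem~\ref{thm:henson} and CRT for pairwise comaximal ideals), the injectivity mechanism is genuinely different. The paper's $\Psi_0$ encodes the map in \emph{both} directions: a forward CRT element $m$ with moduli $1+g(x-c)$ and a backward element $m'$ with moduli $1+g'(y-d)$, together with an auxiliary $u$ divisible by every difference of elements of $A$; injectivity there comes from recovering $x$ modulo $1+g'(y-d)$ while that modulus does not divide $u$. Your sentence is forward-only and extracts injectivity from the explicit ``no foreign residue class'' conjunct; your soundness argument is fine (the witness sets are pairwise disjoint and nonempty, so no choice issues arise even for infinite $A$), and the verification works because you fold the target differences $b_i-b_j$ into the nonzero nonunit $a$ fed to hypothesis (2), which is legitimate since that hypothesis accepts an arbitrary nonzero nonunit. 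Two simplifications come for free: you handle $0\in A$ by excluding $x'=0$ (divisibility by $1+0\cdot g=1$ being trivial, and the non-divisibility for the witness of $x=0$ following from the same coprimality argument), whereas the paper shifts by an element $c\notin A$; and since your construction never needs an element outside $B$, you can drop the paper's third disjunct $\forall x\,(x\in B)$, which the paper includes only because its verification requires some $d\notin B$. Both routes rest on the same ring-theoretic ingredients, so yours is a somewhat leaner variant achieving the same conclusion.
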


\begin{proof}
    We work in the structure $S_{A,B}$ in order to write the predicates as ``$\in A$'' and  ``$\in B$'' as described above. Consider the following auxiliary sentences:
    \[
        \operatorname{NZU}(x)\,:\,\forall\,y\,\left[xy\neq 1\,\wedge\,\left(x\neq 0\right)\right]  
    \]
    which holds for $x$ if and only if $x$ is neither zero nor a unit; and
    \[
        \operatorname{CM}(x,y)\,:\,\exists\,u,v\,\left(ux+vy=1\right)
    \]
    which holds for $x,y$ if and only if $(x)+(y)=S$.
    
    Let $\Psi_0$ be the sentence stating that there exist elements $g,m,c,d,u,g',m'$ such that all the following statements hold:
    \begin{itemize}
        \item $u\neq 0$
        \item $c\not\in A$
        \item $d\not\in B$
        \item $\forall\,x\in A\,\left(\operatorname{NZU}(1+g(x-c))\right)$
        \item $\forall\,x_1,x_2\in A\,\Big(x_1\neq x_2\,\rightarrow\,\operatorname{CM}(1+g(x_1-c), 1+g(x_2-c))\Big)$
        \item $\forall\,x_1,x_2\in A\,\Big(x_1\neq x_2\,\rightarrow\,(x_1-x_2)\mid u\Big)$
        \item $\forall\,x\in A\,\exists\,y\in B$ such that
        \begin{itemize}
            \item[$\triangleright$] $1+g(x-c)\mid m-y$; and
            \item[$\triangleright$] $\operatorname{NZU}(1+g'(y-d))$; and
            \item[$\triangleright$] $1+g'(y-d)\mid m'-x$; and
            \item[$\triangleright$] $1+g'(y-d)\nmid u$
        \end{itemize}
    \end{itemize}

    Suppose $\Psi_0$ holds. Let $f:A\to B$ be given by the final bullet point in that main list: for $x\in A$, $y=f(x)$ satisfies the statements of the sublist. We will show that $f$ is injective. Indeed, suppose $y=f(x_1)=f(x_2)$. Then $1+g'(y-d)$ divides both $m'-x_1$ and $m'-x_2$, hence it divides $x_1-x_2$. But, as $1+g'(y-d)$ does not divide $u$ and $u$ is divisible by every difference between elements of $A$, then $x_1=x_2$.

    Let
    \[
        \Psi : \Psi_0\,\vee\,\forall\,x\,\Big(x\in A\,\rightarrow\, x\in B\Big)\,\vee\,\forall\,x\Big(x\in B\Big).
    \]

    By the injectivity described above, this sentence $\Psi$ satisfies most of the statements of this Lemma. It only remains to show that if $A$ is finite and $|A|\le |B|$, with $B\neq S$, then $S_{A,B}\models\Psi$.

    Let $c,d$ be elements not in $A$, $B$ respectively. Let $u$ be a nonzero, nonunit divisible by all the differences between elements of $A$ ($u$ any nonzero, nonunit if $|A|\le 1$). Let $B_0$ be a subset of $B$ of cardinality $|A|$. Let $(a_i)$ be an enumeration of the elements of $A$; let $(b_i)$ be an enumeration of the elements of $B_0$.
    
    Let $g$ be such that the elements $1+g(a_i-c)$ are nonzero, nonunits, and pairwise comaximal; let $g'$ be such that the elements $u$ and $1+g'(b_i-d)$ are nonzero, nonunits and pairwise comaximal. By the Chinese Remainder Theorem, there exists $m$ such that $1+g(a_i-c)\mid m-b_i$; and there exists $m'$ such that $1+g'(b_i-d)\mid m'-a_i$. Hence $S_{A,B}\models \Psi$. 
\end{proof}

If $\mathcal{F}$, $\mathcal{G}$ are two uniformly parametrizable families of subsets of $S$, with parameter sets $C_1$ and $C_2$ respectively, then we can adapt the sentence $\Psi$ constructed in the proof to obtain a formula $\psi_{\mathcal{F}_1, \mathcal{F}_2}(\bar{x},\bar{y})$, in the original language and with arguments in $C_1\times C_2$, such that if $F_{\bar{x}}\in\mathcal{F}$ is the definable set corresponding to the parameters $\bar{x}$ and $G_{\bar{y}}\in\mathcal{G}$ is the definable set corresponding to the parameters $\bar{y}$, then
\begin{itemize}
    \item if $F_{\bar{x}}\subseteq G_{\bar{y}}$, then $S\models \psi_{\mathcal{F}_1, \mathcal{F}_2}(\bar{x},\bar{y})$;
    \item if $F_{\bar{x}}$ is a finite set, then $S\models\psi_{\mathcal{F}_1, \mathcal{F}_2}(\bar{x},\bar{y})$ if and only if $\left|F_{\bar{x}}\right|\le \left|G_{\bar{y}}\right|$; and
    \item if $F_{\bar{x}}$ is an infinite set and $G_{\bar{y}}$ a finite one, then $S\not\models \psi_{\mathcal{F}_1, \mathcal{F}_2}(\bar{x},\bar{y})$.
\end{itemize}

To make the notation clearer and less cumbersome we will write $\psi(F,G)$ for $F\in\mathcal{F}$ and $G\in\mathcal{G}$, and we will quantify over sets in a definable family as a shorthand for quantifying over parameters in their parameter set.

We will require this additional property later:

\subsubsection{$\Psi$-equivalence}

\begin{definition} \label{def:cardequiv}
    Let $\mathcal{F}$, $\mathcal{G}$ be two uniformly parametrizable families of subsets of $S$. We define the relation $\sim$ between these two families as
    \[
        F\sim G\,\Leftrightarrow\,\psi(F,G)\,\wedge\,\psi(G,F).
    \]

    Note that this relation is first--order definable (on the parameters used to define the families).
\end{definition}

Note that, despite the name, this relation is not necessarily an equivalence relation, as it may fail to be transitive.

\begin{definition} \label{def:Cequiv}
    The uniformly parametrizable family $\mathcal{C}_{\rm equiv}$ is defined from $\mathcal{C}_0$ via the following condition:
    \[
        \mathcal{C}_{\rm equiv} = \left\{F\in\mathcal{C}_0\,:\,\forall G,H\in\mathcal{C}_0\,\left(F\sim G\,\wedge\,F\sim H\,\rightarrow\,G\sim H\right)\right\}.
    \]    
    \end{definition}

Some remarks about this definition:
\begin{itemize}
    \item This is first--order definable: it will restrict the parameter set of $\mathcal{C}$ (definably) while keeping the same formula of definition.
    \item The relation $\sim$ is an equivalence relation on $\mathcal{C}_{\rm equiv}$.
    \item By the construction of $\Psi$, $\sim$ over the finite sets is just equipotence (equal cardinality). Thus $\mathcal{C}_{\rm equiv}$ contains the same finite sets as $\mathcal{C}$.
\end{itemize}

\subsubsection{The graph of addition}

\begin{definition}
    Let $\mathcal{F}$ be a parametrized family of definable subsets of $S$. We define the ternary relation $\Gamma(+)$ on $\mathcal{F}$ by having $\left(F_1,F_2,F_3\right)\in\Gamma(+)$ if and only if
    \[
        \exists x\,\left(F_1\cap (F_2+x) = \emptyset \,\wedge\, \left(F_1\cup \left(F_2+x\right)\sim F_3\right)\right).
    \]
\end{definition}

A few observations about this relation:
\begin{itemize}
    \item For any set $X$, $(X,\emptyset,X)\in\Gamma(+)$ and $(\emptyset, X, X)\in\Gamma(+)$.
    \item For a fixed family $\mathcal{F}$ this relation is first--order definable, as quantification over sets can be replaced by quantification over parameters.
    \item The role of the element $x$ is to shift the set $F_2$ so that the union of $F_1$ and $F_2+x$ is disjoint. In particular, if $F_1$ and $F_2$ are finite then $(F_1,F_2,F_3)\in\Gamma(+)$ if and only if $\left|F_1\right| + \left|F_2\right| = \left|F_3\right|$.
    \item By the previous bullet point, this relation respects $\sim$ if all the sets involved are finite. It might not do so if some of the sets are infinite.
\end{itemize}

\begin{definition} \label{def:equivplus}
    We define the binary relation $\equiv_+$ on $\mathcal{C}_{\rm equiv}$ as follows: $X\equiv_+ Y$ if and only if there exist $A,A',B,B'\in\mathcal{C}_{\rm equiv}$ such that
    \[
        A\sim A'\,\wedge\, B\sim B'\,\wedge\, (A,B,X)\in\Gamma(+)\,\wedge\,(A',B',Y)\in\Gamma(+).
    \]
\end{definition}

Clearly, this relation is first--order definable on the parameter set. It is coarser than $\sim$: if $X\sim Y$, then $(X,\emptyset, X)\in\Gamma(+)$ and $(Y,\emptyset, Y)\in\Gamma(+)$, thus $X\equiv_+ Y$. Moreover, for finite sets $\sim$ and $\equiv_+$ coincide.

\begin{definition} \label{def:Cplus}
    The uniformly parametrizable family $\mathcal{C}_{+}$ is defined from $\mathcal{C}_{\rm equiv}$ via the following condition:
    \[
        \mathcal{C}_{+} = \left\{F\in\mathcal{C}_{\rm equiv}\,:\,\forall G\in\mathcal{C}_{\rm equiv}\,\left(F\sim G\,\leftrightarrow\,F\equiv_+ G\right)\right\}.
    \]   
\end{definition}

\subsubsection{The graph of multiplication}

\begin{definition}
    Let $\mathcal{F}$ be a parametrized family of definable subsets of $S$. We define the ternary relation $\Gamma(\times)$ on $\mathcal{F}$ by having $\left(F_1,F_2,F_3\right)\in\Gamma(\times)$ if and only if there exist elements $\alpha,\beta$ satisfying
    \begin{itemize}
        \item $\operatorname{NZU}(\alpha)$; and
        \item $\operatorname{NZU}(\beta)$; and
        \item $\operatorname{CM}(\alpha,\beta)$; and
        \item $\forall\, x_1,x_2\in F_1\,\Big(x_1\neq x_2\,\rightarrow\,\operatorname{CM}(\alpha, x_1-x_2)\,\wedge\,\operatorname{CM}(\beta, x_1-x_2)\Big)$; and
        \item $\forall\, y_1,y_2\in F_2\,\Big(y_1\neq y_2\,\rightarrow\,\operatorname{CM}(\alpha, y_1-y_2)\,\wedge\,\operatorname{CM}(\beta, y_1-y_2)\Big)$; and
        \item $\left\{\beta y + \alpha x\,:\,x\in F_1\,\wedge\,y\in F_2\right\} \sim F_3$.
    \end{itemize}
\end{definition}

 Note that, as $S$ satisfies the conditions of Theorem \ref{thm:henson}, then such $\alpha$, $\beta$ can always be found when $F_1$, $F_2$ are finite. Moreover, similarly to the case of $\Gamma(+)$ and addition of cardinalities, this relation $\Gamma(\times)$ and multiplication of cardinalities coincide for finite sets:

\begin{proposition}
    Let $(F_1,F_2,F_3)\in\Gamma(\times)$ with $F_1$ and $F_2$ finite. Then $|F_1||F_2| = |F_3|$.
\end{proposition}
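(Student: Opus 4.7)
The plan is to exhibit an explicit injection $\phi\colon F_1\times F_2 \hookrightarrow S$ whose image is precisely the set $E = \{\beta y + \alpha x : x\in F_1,\, y\in F_2\}$ appearing in the last bullet of the definition of $\Gamma(\times)$. Once injectivity is established, $E$ is finite of cardinality exactly $|F_1|\cdot|F_2|$, and since $E\sim F_3$ by definition, Lemma~\ref{lem:definject} forces $F_3$ to be finite of the same cardinality, yielding $|F_3|=|F_1|\cdot|F_2|$.

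For injectivity, let $\phi(x,y) = \beta y + \alpha x$ and suppose $\phi(x_1,y_1) = \phi(x_2,y_2)$, so that
\[
\alpha(x_1 - x_2) = \beta(y_2 - y_1).
\]
Then $\alpha$ divides $\beta(y_2-y_1)$, and since $(\alpha)+(\beta)=S$ (third bullet in the definition of $\Gamma(\times)$), a standard Bezout manipulation $u\alpha + v\beta = 1$ gives $\alpha \mid y_2-y_1$. If we had $y_1\neq y_2$, the fifth bullet would supply $(\alpha) + (y_2-y_1) = S$; combined with $\alpha\mid y_2-y_1$, this would force $\alpha$ to be a unit, contradicting $\operatorname{NZU}(\alpha)$. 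Hence $y_1 = y_2$, and then $\alpha(x_1-x_2) = 0$ in the domain $S$ with $\alpha\neq 0$ yields $x_1 = x_2$.

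Finally, $E$ is the image of the injection $\phi$, so $|E| = |F_1|\cdot |F_2|$. The relation $E\sim F_3$ unfolds as $\psi(E,F_3)\wedge\psi(F_3,E)$: the third bullet of Lemma~\ref{lem:definject} prevents $F_3$ from being infinite (since $E$ is finite), and the second bullet then gives both $|E|\le |F_3|$ and $|F_3|\le |E|$. Thus $|F_3| = |F_1|\cdot|F_2|$, as required. The only genuinely algebraic content is the injectivity argument, but the comaximality axioms built into the definition of $\Gamma(\times)$ are tailored precisely to make it go through, so no real obstacle arises; the fourth bullet (comaximality with differences from $F_1$) is not even needed for this direction, which suggests it is present to guarantee the analogous injectivity when the roles of $F_1$ and $F_2$ are symmetrized elsewhere.
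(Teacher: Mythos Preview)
Your proof is correct and follows essentially the same approach as the paper: both arguments reduce to showing injectivity of the map $(x,y)\mapsto \beta y+\alpha x$ by combining comaximality of $(\alpha)$ and $(\beta)$ with the fifth bullet $\operatorname{CM}(\alpha,y_1-y_2)$ to force $\alpha$ to be a unit, contradicting $\operatorname{NZU}(\alpha)$. Your write-up is slightly more explicit about the Bezout step and about how $E\sim F_3$ forces equality of cardinalities via Lemma~\ref{lem:definject}, and your observation that the fourth bullet is unused here is accurate.
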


\begin{proof}
    By the nice properties of $\sim$ relative to finite sets, it suffices to show that the set $\left\{\beta y + \alpha x\,:\,x\in F_1\,\wedge\,y\in F_2\right\}$ constructed with $\alpha,\beta$ as in the definition of $\Gamma(\times)$ has cardinality $|F_1||F_2|$. In order to do so, we must show that if $(x,y),(x',y')\in F_1\times F_2$ are distinct pairs, then $\beta y + \alpha x \neq \beta y' + \alpha x'$.

    Suppose otherwise. As $S$ is a domain, this implies both $x-x'\neq 0$ and $y-y'\neq 0$. As $(\alpha,\beta)=(1)$ and $(\alpha, y-y')=(1)$, then $(\alpha,\beta(y-y'))=(1)$. But our supposition implies that $\beta(y-y')=-\alpha(x-x')$, hence $(\alpha)=(1)$. Contradiction.
\end{proof}

As was the case for $\Gamma(+)$, this relation is first--order definable on the parameter set of the parametrizable family.

\begin{definition} \label{def:equivtimes}
    We define the binary relation $\equiv_\times$ on $\mathcal{C}_{+}$ as follows: $X\equiv_\times Y$ if and only if there exist $A,A',B,B'\in\mathcal{C}_{+}$ such that
    \[
        A\sim A'\,\wedge\, B\sim B'\,\wedge\, (A,B,X)\in\Gamma(\times)\,\wedge\,(A',B',Y)\in\Gamma(\times).
    \]
\end{definition}

Note that this relation is first--order definable on the parameter set. For finite sets, $\sim$ and $\equiv_\times$ coincide.

\begin{definition} \label{def:Cplustimes}
    The uniformly parametrizable family $\mathcal{C}_{+,\times}$ is defined from $\mathcal{C}_{+}$ via the following condition:
    \[
        \mathcal{C}_{+,\times} = \left\{F\in\mathcal{C}_{+}\,:\,\forall G\in\mathcal{C}_{+}\,\left(F\sim G\,\leftrightarrow\,F\equiv_\times G\right)\right\}.
    \]
\end{definition}

Note the following:
\begin{itemize}
    \item This is first--order definable: it will restrict the parameter set of $\mathcal{C}_{+}$ (definably) while keeping the same formula of definition.
    \item $\mathcal{C}_{+,\times}$ contains the same finite sets as $\mathcal{C}_{+}$.
    \item $\sim$ is an equivalence relation on $\mathcal{C}_{+,\times}$.
    \item $\sim$ is compatible with $\Gamma(\times)$ in the following sense: if $A\sim A'$, $B\sim B'$ and $(A,B,X),(A',B',Y)\in\Gamma(\times)$, then $X\sim Y$.
    \item As $\mathcal{C}_{+,\times}$ is a subset of $\mathcal{C}_{+}$, $\sim$ is still compatible with $\Gamma(+)$ as mentioned above.
\end{itemize}

About this definition:
\begin{itemize}
    \item This is first--order definable: it will restrict the parameter set of $\mathcal{C}_{\rm equiv}$ (definably) while keeping the same formula of definition.
    \item $\mathcal{C}_{+}$ contains the same finite sets as $\mathcal{C}_{\rm equiv}$.
    \item $\sim$ is an equivalence relation on $\mathcal{C}_{+}$.
    \item $\sim$ is compatible with $\Gamma(\times)$ in the following sense: if $A\sim A'$, $B\sim B'$ and $(A,B,X),(A',B',Y)\in\Gamma(\times)$, then $X\sim Y$.
\end{itemize}

\subsubsection{The bound restriction}

\begin{definition}
    The uniformly parametrizable family $\mathcal{M}$ is defined from $\mathcal{C}_{+,\times}$ via the following condition:
    \begin{align}
        \mathcal{M} = \left\{F\in\mathcal{C}_{+,\times}\,: \,\forall G,H\in\mathcal{C}_{+,\times}\,\Big(\psi(G,F)\,\wedge\,\psi(H,F)\,\wedge\,\psi(G,H)\, \right. \nonumber \\
        \left. \rightarrow\,\exists K\in\mathcal{C}_{+,\times}\,\left(\psi(K,F)\,\wedge\,(K,G,H)\in\Gamma(+)\right)\Big)\right\}. \nonumber
    \end{align}

\end{definition}

This is first--order definable: it will restrict the parameter set of $\mathcal{C}_{+,\times}$ (definably) while keeping the same formula of definition.

As our original family $\mathcal{C}$ contains sets of every finite cardinality, for finite $F$ such sets $K$ always exist (they are defined by the condition $|K|=|H|-|G|$). Thus $\mathcal{M}$ contains the same finite sets as $\mathcal{C}_{+,\times}$.

\begin{definition} \label{def:Cbound}
    The uniformly parametrizable family $\mathcal{C}_{\rm bound}$ is defined as a subfamily of $\mathcal{C}_{+,\times}$ via the following condition:
    \[
        \mathcal{C}_{\rm bound} = \left\{F\in\mathcal{C}_{+,\times}\,:\,\exists M\in\mathcal{M}\,\Big(\psi(F,M)\Big)\right\}.
    \]

    This is first--order definable: it will restrict the parameter set of $\mathcal{C}_{+,\times}$ (definably) while keeping the same formula of definition. As such, the relation $\sim$ will be an equivalence relation on $\mathcal{C}_{\rm bound}$.
\end{definition}

As $\mathcal{C}_{+,\times}$ contains sets of every finite cardinality, $\psi$ behaves like the ``less or equal cardinality than'' relation for finite sets and $\Gamma(+)$ respects addition of cardinalities over finite sets, then $\mathcal{C}_{\rm bound}$ contains the same finite sets as $\mathcal{C}_{+,\times}$.

\begin{proposition} \label{prop:finite_substraction}
    Let $A$ be a finite set and $B$ be an infinite one, both of them in $\mathcal{C}_{\rm bound}$. There exists $K\in\mathcal{C}_{\rm bound}$ such that $(K,A,B)\in\Gamma(+)$.
\end{proposition}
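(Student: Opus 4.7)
The plan is to unpack the definition of $\mathcal{C}_{\rm bound}$ and apply the defining property of $\mathcal{M}$ directly. Since $B\in\mathcal{C}_{\rm bound}$, there exists some $M\in\mathcal{M}$ witnessing this, that is, with $\psi(B,M)$. I would first observe that $M$ must itself be infinite: by the third bullet of Lemma~\ref{lem:definject} (lifted to the parametric version), if $M$ were finite then $\psi(B,M)$ would fail because $B$ is infinite.

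Next I would verify the three hypotheses needed to apply the defining condition of $\mathcal{M}$ at $F=M$ with $G=A$ and $H=B$. Both $A$ and $B$ lie in $\mathcal{C}_{+,\times}$ since $\mathcal{C}_{\rm bound}\subseteq\mathcal{C}_{+,\times}$. The condition $\psi(B,M)$ is given. For $\psi(A,M)$ and $\psi(A,B)$, use that $A$ is finite together with the ``finite vs.\ anything'' clause of the adapted $\psi$: $\psi(A,X)$ holds for finite $A$ whenever $|A|\le |X|$, and this is automatic when $X$ is infinite.

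By the definition of $\mathcal{M}$, these hypotheses produce $K\in\mathcal{C}_{+,\times}$ such that $\psi(K,M)$ and $(K,A,B)\in\Gamma(+)$. Finally, since $M\in\mathcal{M}$ and $\psi(K,M)$ holds, the very definition of $\mathcal{C}_{\rm bound}$ places $K$ in $\mathcal{C}_{\rm bound}$, which completes the proof.

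There is no real obstacle in this argument; it is a pure bookkeeping exercise. The only point to keep in mind is that the quantification over sets in the defining condition of $\mathcal{M}$ is really quantification over parameters, so one must ensure that $A$ and $B$ are given by parameters in $\mathcal{C}_{+,\times}$, which is immediate from $\mathcal{C}_{\rm bound}\subseteq\mathcal{C}_{+,\times}$. I would also note in passing that although the statement only claims $K\in\mathcal{C}_{\rm bound}$, the set $K$ is necessarily infinite: from $(K,A,B)\in\Gamma(+)$ with $A$ finite and $B$ infinite, the shifted disjoint union $K\cup(A+x)$ is $\sim$-equivalent to the infinite set $B$, forcing $K$ to be infinite, which is precisely the behaviour the axiom of $R$ demands.
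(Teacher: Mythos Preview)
Your argument is correct and follows essentially the same route as the paper: obtain $M\in\mathcal{M}$ with $\psi(B,M)$ from $B\in\mathcal{C}_{\rm bound}$, note $M$ is infinite, verify $\psi(A,M)$, $\psi(A,B)$, $\psi(B,M)$ to apply the defining property of $\mathcal{M}$, and conclude $K\in\mathcal{C}_{\rm bound}$ from $\psi(K,M)$. Your additional remarks about parameter quantification and the infinitude of $K$ are accurate but not needed for the statement as written.
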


\begin{proof}
    \begin{itemize}
        \item $\psi(A,B)$ (basic property of $\Psi$)
        \item there exists $M\in\mathcal{M}$ such that $\psi(B,M)$ (definition of $\mathcal{C}_{\rm bound}$)
        \item $M$ infinite (otherwise $\psi(B,M)$ would be false)
        \item $\psi(A,M)$ (basic property of $\Psi$)
        \item there exists $K\in\mathcal{C}_{+,\times}$ such that $\psi(K,M)$ and $(K,A,B)\in\Gamma(+)$ (definition of $\mathcal{M}$)
        \item $K\in\mathcal{C}_{\rm bound}$ (definition of $\mathcal{C}_{\rm bound}$). 
    \end{itemize}
\end{proof}

\subsubsection{Operations on the set of classes}

\begin{definition}
    The uniformly parametrizable family $\overline{\mathcal{C}}$ is defined as $\overline{\mathcal{C}} = \mathcal{C}_{\rm bound}\cup\{S\}$.
\end{definition}

As $S\not\in \mathcal{C}_{\rm bound}$, the union that defines the family $\overline{\mathcal{C}}$ is disjoint. Because of this, we can extend many of our definitions, in a first--order definable manner, from $\mathcal{C}_{\rm bound}$ to $\overline{\mathcal{C}}$:

\begin{definition} \label{def:Cbar}
    \begin{itemize}
        \item We set $S\sim S$, and for every $X\in \mathcal{C}_{\rm bound}$ we set $X\not\sim S$, $S\not\sim X$. Thus $\sim$ is an equivalence relation on $\overline{\mathcal{C}}$.
        \item For every $X\in\overline{\mathcal{C}}$, we set $(X,S,S)\in\Gamma(+)$ and $(S,X,S)\in\Gamma(+)$. The relation $\Gamma(+)$ is compatible with $\sim$ in the way stated before.
        \item We set $(\emptyset,S,\emptyset)\in\Gamma(\times)$ and $(S,\emptyset,\emptyset)\in\Gamma(\times)$. For $X\in\overline{\mathcal{C}}$, $C\neq\emptyset$, we set $(X,S,S)\in\Gamma(\times)$ and $(S,X,S)\in\Gamma(\times)$. The relation $\Gamma(\times)$ is compatible with $\sim$ in the way stated before.
    \end{itemize}
\end{definition}

\begin{definition}
    Let $\mathcal{N}$ be the quotient set of $\overline{\mathcal{C}}$ modulo $\sim$. We define two binary operations, $+$ and $\times$, on $\mathcal{N}$:
    \begin{itemize}
        \item If there exist sets $A_0\in [A]$, $B_0\in [B]$ and $C_0\in\overline{\mathcal{C}}$ such that $(A_0,B_0,C_0)$ is in $\Gamma(+)$, then we set $[A]+[B] = [C_0]$. By the compatibility of $\Gamma(+)$ with $\sim$, this is well-defined.
        \item Otherwise, define $[A]+[B]=[S]$.
        \item Similarly for $\times$: $[A]\times [B]=[C_0]$ if $A_0\in [A]$, $B_0\in [B]$ and $(A_0,B_0,C_0)\in\Gamma(\times)$, this is well-defined due to compatibility; otherwise $[A]\times [B] = [S]$.
    \end{itemize}
\end{definition}

\begin{definition}
    Let $[1]$ be the class of the singleton sets. The successor function in $\mathcal{N}$ is defined as $S([A])=[A]+[1]$.
\end{definition}

\begin{proposition}
    $\left(\mathcal{N};[\emptyset],S,+,\times\right)$ is a model of the theory $R$ of Tarski, Mostowski, and Robinson.
\end{proposition}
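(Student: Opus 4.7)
The plan is to verify each of the five axiom schemes $\Omega_1$--$\Omega_5$ in the interpreted structure $(\mathcal{N};[\emptyset],S,+,\times)$. The key preliminary observation is that, by induction on $n$ using the definition of the successor, the term $\Delta_n$ is interpreted as the $\sim$-class of any $n$-element set in $\mathcal{C}_{\rm bound}$: this class is nonempty because Lemma \ref{def:cfill} stocks $\mathcal{C}_{\rm fill}$ with representatives of every finite cardinality and these are preserved all the way down through $\mathcal{C}_{\rm bound}$, and it is a single $\sim$-class because on finite sets $\sim$ is equipotence. From here, essentially all the work amounts to showing that on finite representatives $+$ and $\times$ agree with ordinary natural-number arithmetic, and that the order $\leq$ (defined as $\exists w\,(w+x=y)$) behaves correctly on the finite initial segment.

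For $\Omega_1$ I would pick representatives $A,B$ of cardinalities $n,m$ in $\mathcal{C}_{\rm bound}$; since $S$ is an infinite domain I can choose $x_0$ avoiding the finitely many differences $a-b$, making $A \cap (B+x_0) = \emptyset$ and $|A \cup (B+x_0)| = n+m$; picking a representative $C$ of cardinality $n+m$ yields $(A,B,C) \in \Gamma(+)$ and hence $\Delta_n+\Delta_m=\Delta_{n+m}$. For $\Omega_2$ I would appeal to the hypotheses of Theorem \ref{thm:henson} to produce $\alpha,\beta$ witnessing $\Gamma(\times)$, invoke the proposition already proved to see that $\{\beta y+\alpha x\}$ has cardinality $nm$, and conclude $\Delta_n \times \Delta_m=\Delta_{nm}$. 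For $\Omega_3$ I would simply note that distinct $n,m$ produce non-equipotent finite sets, which $\sim$ separates.

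For $\Omega_4$ I would start from the hypothesis $[G] + [F] = \Delta_n$: because $\Delta_n \neq [S]$ the default case in the definition of $+$ is ruled out, so some representatives give $(G_0,F_0,C_0) \in \Gamma(+)$ with $C_0$ an $n$-element set; the conjunction $G_0 \cap (F_0+x_0) = \emptyset$ and $G_0 \cup (F_0+x_0) \sim C_0$ forces $G_0, F_0$ both finite with $|G_0|+|F_0|=n$, so $[F] = \Delta_k$ for some $0 \leq k \leq n$. For $\Omega_5$ I would split on the nature of $x$: if $x=[S]$, take $w=[S]$, so $w+\Delta_n=[S]=x$ by the extension of $\Gamma(+)$ to $\overline{\mathcal{C}}$; if $x = \Delta_k$ is finite, either $k \leq n$ (take $w=\Delta_{n-k}$) or $k > n$ (take $w=\Delta_{k-n}$), both cases closing by $\Omega_1$; and if $x=[F]$ with $F \in \mathcal{C}_{\rm bound}$ infinite, invoke Proposition \ref{prop:finite_substraction} on a representative of $\Delta_n$ and $F$ to obtain $K \in \mathcal{C}_{\rm bound}$ with $(K, A, F) \in \Gamma(+)$, yielding $\Delta_n \leq x$.

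The main obstacle, and the raison d'\^etre of the chain $\mathcal{C} \supseteq \mathcal{C}_{\rm fill} \supseteq \cdots \supseteq \mathcal{C}_{\rm bound}$, is precisely this last case of $\Omega_5$: without a definable form of ``finite subtraction'' for infinite sets, an infinite class would sit above every $\Delta_n$ with no witness to $\Delta_n \leq x$, and the axiom would fail. The intermediate family $\mathcal{M}$ and the restriction to $\mathcal{C}_{\rm bound}$ were engineered precisely so that Proposition \ref{prop:finite_substraction} holds, which is what makes this final case go through.
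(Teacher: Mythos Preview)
Your proposal is correct and complete. The paper itself states this proposition without proof, treating it as an immediate consequence of the preceding construction; you have accurately identified and verified the five axiom schemes, correctly isolating the role of Proposition~\ref{prop:finite_substraction} in handling the infinite case of $\Omega_5$, which is indeed the only nontrivial point and the reason the bound restriction $\mathcal{C}_{\rm bound}$ was introduced.
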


\subsection{Applying the criterion to the present work}

Let $p$ be an odd prime and let $q$ be a power of $p$. Let $R=\mathbb{F}_{q}[t], K=\mathbb{F}_{q}(t)$, and let $\|x\|_\infty:=q^{\deg(b)-\deg(a)}$  where $x=\frac{a}{b}$ with $a\in R$ and $b\in R\setminus\{0\}$. Denote by $K^{\rm sep}$ a fixed separable closure of $K$ and fix a norm on $K^{\rm sep}$ extending the one in $K$, which we will also  denote it by  $\| \ \|_\infty$ by abuse of notation. Let $\mathcal{O}_{K^{\rm sep}}$ be the integral closure of $R$ in $K^{\rm sep}$. We will show that many subrings $S$ of  $\mathcal{O}_{K^{\rm sep}}$ satisfy the hypotheses of Theorem~\ref{thm:henson}.

\begin{lemma} \label{divisibilidades}
    Let $S$ be any subring of $\mathcal{O}_{K^{\rm sep}}$ containing $R$. Then the following holds:
     \begin{enumerate}[(a)]
            \item \label{item:chinese}For any finite set $A\subseteq S$ there exists an element $x$, neither zero nor a unit, such that $(x)+(a_i)=S$ for every $a_i\in A\setminus\left\{0\right\}$.
            \item \label{item:comax}For any finite set $A\subseteq S$ that does not contain zero and any $a\in S$ that is neither a zero nor a unit there exists an element $g\in S$ such that:
            \begin{itemize}
                \item for any $a_i\in A$, $1+a_ig$ is neither zero nor a unit;
                \item for any $a_i\in A$, $(a)+(1+a_ig) = S$; and
                \item for distinct $a_i,a_j\in A$, $(1+a_ig) + (1+a_jg) = S$.
            \end{itemize}
        \end{enumerate}
\end{lemma}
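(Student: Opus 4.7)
The key tool throughout is that every nonzero $s\in S$ is integral over $R$ (since $S\subseteq \mathcal{O}_{K^{\mathrm{sep}}}$), and because $R$ is integrally closed, the minimal polynomial of $s$ over $K$ lies in $R[T]$ and has a nonzero constant term $\nu(s)\in R\setminus\{0\}$ satisfying $\nu(s)\in (s)\cdot S$ (by isolating the constant coefficient in the minimal relation). This transports divisibility questions from $S$ into the PID $R$.

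For \ref{item:chinese}, the plan is to take $x=\pi$ for an irreducible $\pi\in R$ of positive degree that is coprime in $R$ to each $\nu(a_i)$ with $a_i\in A\setminus\{0\}$; since $R=\mathbb{F}_q[t]$ has infinitely many irreducibles, such a $\pi$ exists. The element $\pi$ is not a unit of $S$ because $1/\pi\notin\mathcal{O}_{K^{\mathrm{sep}}}$ (a monic integral relation for $1/\pi$ over $R$ would force $\pi$ to be a unit of $R$, contradicting $\deg\pi\ge 1$). A B\'ezout identity $1=r\pi+s\,\nu(a_i)$ in $R$, combined with $\nu(a_i)\in(a_i)S$, then yields $1\in (\pi,a_i)S$ as required.

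For \ref{item:comax}, the plan is to set
\[
g \;=\; a\,\prod_{i<j}(a_i-a_j)\,\pi^N \;\in\; S,
\]
with $\pi\in R$ irreducible of positive degree and $N$ a positive integer to be chosen large (the product over $i<j$ is understood as $1$ when $|A|\le 1$). By construction, both $a$ and each $a_i-a_j$ divide $g$ in $S$. The divisibility $a\mid g$ together with the identity $1=(1+a_ig)-a_ig$ immediately produces $(a)+(1+a_ig)=S$. For the pairwise coprimality, the algebraic identity
\[
(a_i-a_j)\bigl(1+g(a_i+a_j)\bigr)\;=\;a_i(1+a_ig)-a_j(1+a_jg)
\]
together with $(a_i-a_j)g=(1+a_ig)-(1+a_jg)$ places $a_i-a_j$ inside $(1+a_ig,\,1+a_jg)\,S$; then $(a_i-a_j)\mid g$ in $S$ and the same subtraction trick as before yields $1\in(a_i-a_j,\,1+a_ig)\,S\subseteq(1+a_ig,\,1+a_jg)\,S$. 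Finally, $1+a_ig$ cannot vanish because $a_ig=-1$ would force the non-unit $\pi^N$ to divide $1$; and it is not a unit of $S$ for $N$ large by the norm computation inside any finite separable extension $L/K$ containing $a$ and the $a_i$, since the pole orders at the places of $L$ above $\infty$ grow linearly in $N$, so $N_{L/K}(1+a_ig)\in R$ becomes a nonconstant polynomial, whence $1+a_ig\notin\mathcal{O}_L^\times$ and therefore (using $S\cap L\subseteq\mathcal{O}_L$) also not a unit in $S$.

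The main obstacle is that $S$ need not be Dedekind and may sit strictly inside the integral closure $\mathcal{O}_L$ of $R$ in any convenient finite extension, so B\'ezout relations obtained in $\mathcal{O}_L$ do not lift to $S$ automatically. The design of $g$ circumvents this by making the B\'ezout coefficients in $S$ entirely explicit through the built-in divisibilities $a\mid g$ and $(a_i-a_j)\mid g$.
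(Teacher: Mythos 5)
Your proposal is correct and follows essentially the same route as the paper: reduce to $R=\mathbb{F}_q[t]$ via the nonzero constant term of the minimal polynomial for part (a), and for part (b) take $g$ to be a multiple of $a$, of the pairwise differences $a_i-a_j$, and of a large-degree element of $R$ so that the norm at infinity shows $1+a_ig$ is neither zero nor a unit, with the built-in divisibilities giving the comaximality statements. The only (harmless) deviation is your final identity $a_i(1+a_ig)-a_j(1+a_jg)=(a_i-a_j)\bigl(1+g(a_i+a_j)\bigr)$, which yields $1\in(1+a_ig)+(1+a_jg)$ without the paper's step $1=2(1+a_ig)-(1+2a_ig)$ and hence does not even use that the characteristic is odd.
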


\begin{proof}
Let $s\in S\setminus\{0\}$ be given. Since $s$ is integral over $R$, it follows that the minimal polynomial of $s$, $p_s(x):=x^n+c_{n-1}x^{n-1}+\dots+c_0$, belongs to $R[x]$ and the constant coefficient $c_0$ is different from $0$. It follows that $-c_0=s(s^{n-1}+c_{n-1}s^{n-1}+\dots+c_1).$ Thus, $sS\cap R$ contains a non-zero element. We now proceed to prove clause (\ref{item:chinese}): let $a_1,\dots, a_n$ be an enumeration of the non-zero elements of $A$. For each $i$, pick a non-zero multiple $b_i$ of $a_i$ which belongs to $R$. If $x$ is any irreducible polynomial in $R$ coprime with the $b_i$'s, then $x$ is as required.

Let us now prove clause (\ref{item:comax}). Let $L=K(a_1,\dots,a_n)$ and let $\sigma_1,\dots,\sigma_k$ be the embeddings of $L$ into $K^{\rm sep}$ fixing $K$. Let $c$ be a nonzero multiple of $\prod\limits_{i\ne j}(a_i-a_j)^2$ which belongs to $R$ (if $n=1$, then set $\prod\limits_{i\ne j}(a_i-a_j)^2=1$). Pick $M\in R,$ of large enough degree, so that $\| \sigma_j(1+a_iaMc)\|_\infty>1$ for any $j=1,\dots,k$ and any $i=1,\dots,n.$ We claim that $g=aMc$ is as required. First of all, note that $1+a_ig$ are neither zero nor units as $\|\sigma_1(1+a_ig)\|_\infty\cdots \|\sigma_1(1+a_ig)\|_\infty= \| N_{L/K}(1+a_ig)\|_\infty>1$ for any $i=1,\dots,n.$ Notice that since $a | g$, then clearly $I+I_i=S$. We are left to show the last bullet point. Let $i\ne j$ be given. Observe that
\[
    (a_i-a_j)g=(1+a_ig)-(1+a_jg)\in I_i+I_j
\]
By our choice of $g$ there is an $s\in S$ such that $s(a_i-a_j)g=g^2$. Thus, $g^2\in I_i+I_j$. It follows that 
\[
    1+2a_ig=(1+a_ig)^2-a_i^2g^2
\]
also belongs to $I_i+I_j$. Finally, we have that
\[
    1=2(1+a_ig)-(1+2a_ig)\in I_i+I_j
\]
as required.
\end{proof}	
		
\begin{corollary} \label{coro:hensonapplied}
    Let $S$ be any subring of $\mathcal{O}_{K^{\rm sep}}$ containing $R$. Let $\mathcal{F}$ be a uniformly parametrizable family of subsets of $R$ which contains finite sets of arbitrary large size, then there is an interpretation of the Theory $R$ in the first-order structure of the ring $S$. In particular, the theory of $S$ is undecidable.
\end{corollary}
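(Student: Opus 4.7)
The plan is to verify the two hypotheses of Theorem \ref{thm:henson} for the ring $S$ and then invoke that theorem. The divisibility-type conditions on the commutative domain $S$ are precisely clauses (\ref{item:chinese}) and (\ref{item:comax}) of Lemma \ref{divisibilidades}, which have been proved for every subring of $\mathcal{O}_{K^{\rm sep}}$ containing $R$; since $S$ is such a ring, this step is immediate.

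For the parametrizability hypothesis, the key observation is the trivial inclusion $R \subseteq S$: every subset of $R$ is already a subset of $S$, so the given family $\mathcal{F}$ is in particular a family of subsets of $S$. Its uniform parametrization is therefore a uniform parametrization of a family of subsets of $S$ in the sense of the definition preceding Theorem \ref{thm:henson}, and by hypothesis $\mathcal{F}$ contains finite sets of arbitrarily large size.

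With both hypotheses in hand, Theorem \ref{thm:henson} produces an interpretation of the theory $R$ of Tarski, Mostowski and Robinson inside the first-order structure of $S$, giving the asserted undecidability. The argument is essentially bookkeeping, so there is no real obstacle: the corollary is a one-line synthesis of the Main Theorem with the arithmetic lemma proved just above it. The only point worth keeping in mind—and it is what makes Lemma \ref{divisibilidades} strong enough to close the argument—is that the two divisibility conditions are stated for the ambient ring $S$, not for $R$ alone, which is exactly why the construction in the proof of Theorem \ref{thm:henson} (e.g.\ the choice of the element $g$ in Lemma \ref{def:cfill}) has room to operate.
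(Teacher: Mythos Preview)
Your proof is correct and is exactly the intended approach: the paper states the corollary without proof immediately after Lemma~\ref{divisibilidades}, so its implicit argument is precisely the one-line synthesis you give---Lemma~\ref{divisibilidades} supplies the two divisibility hypotheses of Theorem~\ref{thm:henson}, the given family supplies the parametrizability hypothesis, and the theorem does the rest. The only trivial checks you might make explicit are that $S$ is an infinite commutative domain (clear, since $S$ is a subring of the field $K^{\rm sep}$ and contains the infinite ring $R=\mathbb{F}_q[t]$).
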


\section{Definability of the ring of integers} \label{sec:sasha}

\subsection{Some results from algebraic number theory} \label{ant}

In this section we review some basic results from algebraic number theory and also some more specific results from the appendix in \cite{sasha}.

Let $K$ be a global function field. By a prime of $K$ we mean an equivalence class of nontrivial absolute values of $K$. In a global function field, all such absolute values are non-Archimedean and we can represent a prime as a pair $(\mathfrak{p}, \mathcal{O}_\mathfrak{p})$ where $\mathcal{O}_\mathfrak{p}$ is a valuation ring of $K$ and $\mathfrak{p}\subseteq \mathcal{O}_\mathfrak{p}$ is its unique maximal ideal. We will often refer only to the ideal $\mathfrak{p}$ as a prime of $K$. We denote by $v_\mathfrak{p}$ the associated normalized discrete valuation on $K$. 

Define $K_\mathfrak{p}$ to be the completion of $K$ at $\mathfrak{p}$, $R_\mathfrak{p}$ the ring of integers in $K_\mathfrak{p}$, the maximal ideal of $R_\mathfrak{p}$ by $\hat{\mathfrak{p}}$, and by $\mathbb{F}_\mathfrak{p}$ the residue field of $\hat{\mathfrak{p}}$, and let $\operatorname{red}_\mathfrak{p}: R_\mathfrak{p}\to \mathbb{F}_\mathfrak{p}.$ Let $\mathcal{O}_\mathfrak{p}=R_\mathfrak{p}\cap K$.

If $L$ is a finite separable extension of $K$ and $\mathfrak{q}$ is a prime in $L$ above $\mathfrak{p}$, then the ramification degree $e(\hat{\mathfrak{q}},\hat{\mathfrak{p}})$ is the exponent in the factorization $\hat{\mathfrak{p}}R_\mathfrak{q}=\hat{\mathfrak{q}}^e$ and the extension degree $f(\hat{\mathfrak{q}},\hat{\mathfrak{p}})$ is $[\mathbb{F}_\mathfrak{q} : \mathbb{F}_\mathfrak{p}]$. It is well known that $R_\mathfrak{p}/\hat{\mathfrak{p}}\cong \mathcal{O}_\mathfrak{p}/\mathfrak{p}$ and that the residue field degrees and the ramification indices coincide, i.e., $f(\mathfrak{q},\mathfrak{p})=f(\hat{\mathfrak{q}},\hat{\mathfrak{p}})$ and $e(\mathfrak{q},\mathfrak{p})=e(\hat{\mathfrak{q}},\hat{\mathfrak{p}})$.

Let $\ell$ be a rational prime coprime with the characteristic of $K$ and let $\zeta_\ell$ be a primitive $\ell$-th root of unity.
   
\begin{lemma}\label{lemma:rosen}
    Let $\zeta_\ell, b\in K$ with $b$ not an $\ell$-th power, and let $\mathfrak{p}$ a prime of $K$. The following hold:
    \begin{enumerate}[{\rm (i)}]
        \item \label{item:rosen_1}The prime $\mathfrak{p}$ is ramified in $K(\sqrt[\ell]{b})$ if and only if $\ncongruente{v_{\mathfrak{p}}(b)}{0}{\ell}.$
        \item If $v_{\mathfrak{p}}(b)=0$, then $\mathfrak{p}$ splits in $F(\sqrt[\ell]{b })$ if and only if $b\in  \mathbb{F}_\mathfrak{p}^{\times\ell} $.
    \end{enumerate}
\end{lemma}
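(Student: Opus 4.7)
The plan is to treat this as a standard exercise in Kummer theory, exploiting the three hypotheses $\zeta_\ell \in K$, $\ell \neq \mathrm{char}(K)$, and $b \notin K^{\times \ell}$ together, which make $K(\sqrt[\ell]{b})/K$ a cyclic Galois extension of degree exactly $\ell$. I will prove (i) first by an additive valuation argument, then reduce (ii) to counting via the $efg = \ell$ relation.

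For (i), the ``if'' direction I would handle with a direct computation. Let $\mathfrak{q}$ lie above $\mathfrak{p}$ in $K(\sqrt[\ell]{b})$, let $e = e(\mathfrak{q}/\mathfrak{p})$, and write $\beta = \sqrt[\ell]{b}$ so that $v_\mathfrak{q}(\beta) = \frac{1}{\ell} v_\mathfrak{q}(b) = \frac{e}{\ell} v_\mathfrak{p}(b)$. Since this must be an integer and $\gcd(v_\mathfrak{p}(b), \ell) = 1$ by hypothesis, $\ell \mid e$; combined with $e \leq [K(\beta):K] = \ell$, this forces $e = \ell$, so $\mathfrak{p}$ is (totally) ramified. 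For the ``only if'' direction, first I would observe that replacing $b$ with $b \pi^{-\ell k}$ (where $\pi$ is a uniformizer at $\mathfrak{p}$ and $v_\mathfrak{p}(b) = \ell k$) does not change the field $K(\sqrt[\ell]{b})$, so we may assume $v_\mathfrak{p}(b) = 0$. Then $b$ reduces to a nonzero element $\bar b \in \mathbb{F}_\mathfrak{p}$, the polynomial $x^\ell - \bar b$ is separable over $\mathbb{F}_\mathfrak{p}$ (its derivative $\ell x^{\ell-1}$ is coprime to it because $\ell$ is invertible in $\mathbb{F}_\mathfrak{p}$), and one concludes that $\mathfrak{p}$ is unramified in $K(\beta)$ by the standard criterion: the local extension $K_\mathfrak{p}(\beta)/K_\mathfrak{p}$ is generated by a root of a polynomial whose reduction is separable, hence unramified.

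For (ii), by part (i) the prime $\mathfrak{p}$ is unramified, so $e = 1$. Since $K(\sqrt[\ell]{b})/K$ is Galois of prime degree $\ell$, the fundamental identity $efg = \ell$ forces either $f = \ell, g = 1$ (inert) or $f = 1, g = \ell$ (totally split). Thus $\mathfrak{p}$ splits if and only if the residue extension is trivial, equivalently if and only if the polynomial $x^\ell - b$ (whose root generates the extension) already has a root modulo $\mathfrak{p}$. By Hensel's lemma applied to $x^\ell - b$ over the complete local ring $R_\mathfrak{p}$ (using again that $\ell$ is invertible in $\mathbb{F}_\mathfrak{p}$ so that the derivative is nonzero at any root), $\bar b$ being an $\ell$-th power in $\mathbb{F}_\mathfrak{p}$ is equivalent to $b$ being an $\ell$-th power in $K_\mathfrak{p}$, which is the required statement.

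I do not anticipate a genuine obstacle here; the only point requiring attention is keeping the hypothesis $\ell \neq \mathrm{char}(K)$ visible throughout, since it is what makes both the separability of $x^\ell - \bar b$ and the applicability of Hensel's lemma go through, and is also what guarantees that adjoining $\sqrt[\ell]{b}$ produces a \emph{separable} degree-$\ell$ extension so that the identity $efg = \ell$ is available. An alternative, more conceptual route would be to cite the standard discriminant computation $\mathrm{disc}(x^\ell - b) = \pm \ell^\ell b^{\ell - 1}$ and note that a prime ramifies in $K(\sqrt[\ell]{b})$ exactly when it divides this discriminant, but the direct valuation argument above is cleaner and avoids having to separate the case $\mathfrak{p} \mid \ell$.
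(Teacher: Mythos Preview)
Your argument is correct and is precisely the standard Kummer-theoretic proof; the paper itself does not give a proof at all but simply cites Propositions~10.3 and~10.5 of Rosen's \emph{Number Theory in Function Fields}, where essentially the same valuation-and-separability argument appears. So there is no meaningful difference in approach to compare---you have supplied what the paper outsources.

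Two minor remarks. First, the Hensel step at the end of your treatment of~(ii) overshoots the target: the lemma as stated asks for $\bar b\in\mathbb{F}_\mathfrak{p}^{\times\ell}$ (i.e.\ $b$ an $\ell$-th power in the \emph{residue} field), which you already obtain at the line ``has a root modulo $\mathfrak{p}$''; lifting to $K_\mathfrak{p}$ is not required. Second, your closing comment about ``separating the case $\mathfrak{p}\mid\ell$'' is moot in the function-field setting, since $\ell$ is coprime to the characteristic and hence a unit in every residue field---there is no such prime. Neither point affects the validity of the proof.
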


\begin{proof}
    See Propositions 10.3 and 10.5 in \cite{rosen}.
\end{proof}
 	        
\begin{lemma}\label{lemma:rumely}
    If $\zeta_\ell, b\in K$ with $b$ not an $\ell$-th power, $\mathfrak{p}$ a prime of $K$, and $\mathfrak{q}$ a prime of $L:=K(\sqrt[\ell]{b})$ above $\mathfrak{p}$, then the following hold:
    \begin{enumerate}[{\rm (i)}]
        \item If $v_{\mathfrak{p}}(b)\not\equiv\ 0 \ {\rm mod}\ \ell$, then $L_{\mathfrak{q}}/K_{\mathfrak{p}}$ is totally ramified  and $\norm{L_\mathfrak{q}}{K_\mathfrak{p}}{L_\mathfrak{q}^\times}$ is generated by $b$ and $(K_{\mathfrak{p}}^\times)^\ell.$
        \item \label{item:rumely_2}If $v_{\mathfrak{p}}(b)\equiv\ 0 \ {\rm mod}\ \ell$ but $b\notin \mathbb{F}^{\times\ell}$, then $L_\mathfrak{q}/ K_\mathfrak{p}$ is unramified and of degree $\ell$ and $\norm{L_\mathfrak{q}}{K_\mathfrak{p}}{L_\mathfrak{q}^\times }=\{x\in K_\mathfrak{p}^\times : v_\mathfrak{p}(x)\equiv\ 0 \ {\rm mod}\ \ell \}$. 
        \item \label{item:rumely_3}If $\congruente{v_\mathfrak{p}(b)}{0}{\ell}$ and $b\in K_\mathfrak{p}{^\times\ell}$, then $L_\mathfrak{q} / K_\mathfrak{p}$ is trivial  and $\norm{L_\mathfrak{q}}{K_\mathfrak{p}}{L_\mathfrak{q}^\times }=K_\mathfrak{p}^\times$.
    \end{enumerate}
\end{lemma}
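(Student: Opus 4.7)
The plan is to first determine the ramification/inertia data of $\mathfrak{q}$ over $\mathfrak{p}$ by reading off Lemma~\ref{lemma:rosen}, using that $[L:K]=\ell$ is prime so the factorization $\sum e_if_i=\ell$ forces a rigid local picture in each case. Then I would compute the norm groups case by case by passing to the completions and exploiting Kummer theory.

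First I would set up the decompositions. In case (iii), $v_\mathfrak{p}(b)\equiv 0\pmod\ell$ and $b\in K_\mathfrak{p}^{\times\ell}$ means $\sqrt[\ell]{b}\in K_\mathfrak{p}$, hence $L_\mathfrak{q}=K_\mathfrak{p}$ and the norm group is trivially all of $K_\mathfrak{p}^\times$. In case (ii), Lemma~\ref{lemma:rosen}(\ref{item:rosen_1}) gives unramifiedness, while Lemma~\ref{lemma:rosen} together with the failure of $b$ to be a residual $\ell$-th power shows that $\mathfrak{p}$ does not split; since $[L:K]=\ell$ is prime we must have $e=1$, $f=\ell$, so $L_\mathfrak{q}/K_\mathfrak{p}$ is the unique unramified extension of degree $\ell$. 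In case (i), $\mathfrak{p}$ is ramified by Lemma~\ref{lemma:rosen}(\ref{item:rosen_1}), and primality of $\ell$ forces $e=\ell$, $f=1$, i.e.\ $L_\mathfrak{q}/K_\mathfrak{p}$ is totally ramified of degree $\ell$.

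Next I would handle the norm computation in the unramified case (ii). A uniformizer $\pi$ of $K_\mathfrak{p}$ is also a uniformizer of $L_\mathfrak{q}$, and since $N(x)=x^\ell$ for $x\in K_\mathfrak{p}^\times$, we have $v_\mathfrak{p}(N(\alpha))=f\cdot v_\mathfrak{q}(\alpha)=\ell\, v_\mathfrak{q}(\alpha)$. So every norm has valuation divisible by $\ell$. Conversely, the norm on units is surjective: the induced map on the residue fields is the usual norm of finite fields $\mathbb{F}_\mathfrak{q}\to\mathbb{F}_\mathfrak{p}$, which is surjective, and surjectivity lifts to $\mathcal{O}_{L_\mathfrak{q}}^\times\to\mathcal{O}_{K_\mathfrak{p}}^\times$ by successive approximation along the unit filtration (Hensel). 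Combining these, any $x=\pi^{\ell k}u\in K_\mathfrak{p}^\times$ with $\ell\mid v_\mathfrak{p}(x)$ equals $N(\pi^k u')$ for some $u'$ with $N(u')=u$, giving the desired equality in (\ref{item:rumely_2}).

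The main work, and the main obstacle, is case (i). The easy containment $\langle b,(K_\mathfrak{p}^\times)^\ell\rangle\subseteq N(L_\mathfrak{q}^\times)$ comes from $(K_\mathfrak{p}^\times)^\ell=N(K_\mathfrak{p}^\times)\subseteq N(L_\mathfrak{q}^\times)$ together with the direct computation
\[
N_{L_\mathfrak{q}/K_\mathfrak{p}}\bigl(\sqrt[\ell]{b}\bigr)=\prod_{k=0}^{\ell-1}\zeta_\ell^k\sqrt[\ell]{b}=\zeta_\ell^{\ell(\ell-1)/2}\,b,
\]
which equals $b$ when $\ell$ is odd and $-b$ when $\ell=2$ (and in either case lies in $\langle b,(K_\mathfrak{p}^\times)^\ell\rangle$ up to the usual sign adjustment absorbed by squares). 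For the reverse containment I would invoke the fundamental equality $[K_\mathfrak{p}^\times:N(L_\mathfrak{q}^\times)]=[L_\mathfrak{q}:K_\mathfrak{p}]=\ell$ for cyclic extensions of local fields (local class field theory, or equivalently Tate cohomology of $L_\mathfrak{q}^\times$ together with Hilbert 90), and combine it with the Kummer-theoretic fact that $K_\mathfrak{p}^\times/(K_\mathfrak{p}^\times)^\ell\cong(\mathbb{Z}/\ell)^2$ because $\zeta_\ell\in K_\mathfrak{p}$. Since $v_\mathfrak{p}(b)\not\equiv 0\pmod\ell$, the class of $b$ in this group is nontrivial, so $\langle b,(K_\mathfrak{p}^\times)^\ell\rangle$ has index exactly $\ell$; being contained in $N(L_\mathfrak{q}^\times)$, which also has index $\ell$, the two groups coincide.

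The main obstacle is justifying the index equality in case (i); the rest is routine once the ramification data has been pinned down. One could avoid local class field theory by a direct filtration computation of $[K_\mathfrak{p}^\times:N(L_\mathfrak{q}^\times)]$ in a totally ramified cyclic degree-$\ell$ extension, but that is notationally heavier, so I would prefer to cite the standard result. A minor bookkeeping point, which I would flag rather than grind through, is the sign $\zeta_\ell^{\ell(\ell-1)/2}$ at $\ell=2$, which is innocuous because the resulting discrepancy is absorbed into $(K_\mathfrak{p}^\times)^\ell$ in the statement.
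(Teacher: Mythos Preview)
The paper does not prove this lemma at all: its entire proof is the sentence ``See Lemma~1 in \cite{rumely}.'' So there is no approach to compare against; you have supplied the argument that the paper outsources. Your plan is the standard one and matches what one finds in Rumely: read off the splitting behaviour from Lemma~\ref{lemma:rosen}, use that $[L:K]=\ell$ is prime to pin down $(e,f)$, and then compute norm groups via the index formula $[K_\mathfrak{p}^\times:N(L_\mathfrak{q}^\times)]=\ell$ from local class field theory together with the Kummer description $K_\mathfrak{p}^\times/(K_\mathfrak{p}^\times)^\ell\cong(\mathbb{Z}/\ell)^2$ (valid here because $\ell\neq p$ kills the $1$-units and $\zeta_\ell\in K$ forces $\ell\mid |\mathbb{F}_\mathfrak{p}^\times|$). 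Cases (ii) and (iii) are handled cleanly.

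One genuine wrinkle you wave away too quickly: in case (i) with $\ell=2$ you compute $N(\sqrt{b})=-b$ and then assert the sign is ``absorbed by squares.'' That is not automatic. Whether $-1\in\langle b,(K_\mathfrak{p}^\times)^2\rangle$ is exactly the question of whether $-1$ is a square in $K_\mathfrak{p}$, since $b$ has odd valuation while $-1$ has valuation zero. If $-1\notin(\mathbb{F}_\mathfrak{p}^\times)^2$ then $\langle b,(K_\mathfrak{p}^\times)^2\rangle$ and $\langle -b,(K_\mathfrak{p}^\times)^2\rangle$ are the two \emph{distinct} index-$2$ subgroups of $K_\mathfrak{p}^\times$ containing $(K_\mathfrak{p}^\times)^2$ with odd-valuation representatives, and it is the latter, not the former, that equals the norm group. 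So either restrict to odd $\ell$ (which is harmless for the applications in this paper, where one is free to choose $\ell$), or state case~(i) with $(-1)^{\ell-1}b$ in place of $b$; do not claim the discrepancy disappears into $(K_\mathfrak{p}^\times)^\ell$.
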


\begin{proof}
        See Lemma 1 in \cite{rumely}.
\end{proof}

Let $x, b, d \in K^*$ be such that $bx^\ell+b^\ell\ne 0$ and $d+d^{-1}\ne 0.$ Define 
\begin{itemize}
    \item  $L_1:=K\left (\sqrt[\ell]{1+x^{-1}} \right ).$
    \item  $L_2:=K\left (\sqrt[\ell]{1+x^{-1}},\sqrt[\ell]{1+(bx^\ell+b^\ell)^{-1}} \right ).$
    \item  $L:=K\left (\sqrt[\ell]{1+x^{-1}},\sqrt[\ell]{1+(bx^\ell+b^\ell)^{-1}},\sqrt[\ell]{1+(d+d^{-1})x^{-1}} \right ).$
\end{itemize}

\begin{lemma}\label{lemma:ncong1}   
    If $\zeta_\ell\in K$, and $\mathfrak{p}$ is a prime of $K$ which satisfies the following conditions: 
    \begin{enumerate}[{\rm (i)}]
        \item the valuation $v_{\mathfrak{p}}(c)=0$ and $\operatorname{red}_{\mathfrak{p}}(c)\notin \mathbb{F}_\mathfrak{p}^{\times\ell}$;
        \item $v_{\mathfrak{p}}(x)<0;$
        \item $\ncongruente{v_{\mathfrak{p}}(b)}{0}{\ell}$;
        \item $v_{\mathfrak{p}}(bx^\ell)< v_{\mathfrak{p}}(b^\ell)$ and
        \item \label{item:negative_v}$v_{\mathfrak{p}}(b)<0$,
    \end{enumerate}
    then for any prime factor $\mathfrak{q}$ of $\mathfrak{p}$ in $L$ we have that $\operatorname{red}_{\mathfrak{q}}(c)\notin \mathbb{F}_\mathfrak{q}^{\times\ell}$ and $\ncongruente{v_{\mathfrak{q}}(bx^\ell+b^\ell)}{0}{\ell}$. 
\end{lemma}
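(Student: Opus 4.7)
The strategy is to prove that $\mathfrak{p}$ splits completely in $L$; the two conclusions then follow at once. Since $L$ is a compositum of three Kummer extensions of degree dividing $\ell$, for any $\mathfrak{q}\mid\mathfrak{p}$ in $L$ both $e(\mathfrak{q},\mathfrak{p})$ and $f(\mathfrak{q},\mathfrak{p})$ are powers of $\ell$ dividing $\ell^3$; showing either is coprime to $\ell$ is equivalent to showing it equals $1$. Once $e=f=1$, the residue field $\mathbb{F}_\mathfrak{q}$ coincides with $\mathbb{F}_\mathfrak{p}$, so hypothesis (i) gives the first conclusion immediately. Moreover $v_\mathfrak{q}$ restricts to $v_\mathfrak{p}$ on $K$, and by the ultrametric inequality combined with hypothesis (iv),
\[
v_\mathfrak{q}(bx^\ell+b^\ell)=v_\mathfrak{p}(bx^\ell+b^\ell)=v_\mathfrak{p}(b)+\ell\, v_\mathfrak{p}(x),
\]
which by hypothesis (iii) is not divisible by $\ell$, giving the second conclusion.

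To prove complete splitting at $\mathfrak{p}$ in each Kummer factor $K(\sqrt[\ell]{z_i})$, the plan is to invoke Lemma~\ref{lemma:rumely}(\ref{item:rumely_3}): it suffices to check that each $z_i$ is an $\ell$-th power in the completion $K_\mathfrak{p}$. Since $\ell$ is coprime to $\operatorname{char}\mathbb{F}_\mathfrak{p}$, Hensel's lemma applied to $y^\ell-z_i$ at the approximate root $y=1$ yields a genuine $\ell$-th root in $K_\mathfrak{p}$ as soon as $v_\mathfrak{p}(z_i-1)>0$. For $z_1=1+x^{-1}$, hypothesis (ii) gives $v_\mathfrak{p}(x^{-1})>0$ directly. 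For $z_2=1+(bx^\ell+b^\ell)^{-1}$, the ultrametric inequality together with hypotheses (iii)--(v) yields $v_\mathfrak{p}(bx^\ell+b^\ell)=v_\mathfrak{p}(b)+\ell v_\mathfrak{p}(x)<0$, hence $v_\mathfrak{p}((bx^\ell+b^\ell)^{-1})>0$, and the Hensel argument applies verbatim.

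The main technical obstacle is the third generator $z_3=1+(d+d^{-1})x^{-1}$, since the hypotheses (i)--(v) place no direct constraint on $v_\mathfrak{p}(d)$. The plan is to verify $v_\mathfrak{p}((d+d^{-1})x^{-1})>0$ by a finer valuation analysis leveraging the strong negativity of $v_\mathfrak{p}(x)$ from (ii) against whatever contribution $d+d^{-1}$ makes at $\mathfrak{p}$; with this positivity in place, Hensel's lemma applies as before and $K(\sqrt[\ell]{z_3})/K$ also splits completely at $\mathfrak{p}$. Assembling the three splittings shows that $\mathfrak{p}$ splits completely in $L$, and both conclusions follow via the reduction described in the first paragraph.
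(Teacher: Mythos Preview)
Your overall architecture is correct and matches the argument the paper defers to in \cite{sasha}: show that $\mathfrak{p}$ splits completely in $L$, whence $e(\mathfrak{q},\mathfrak{p})=f(\mathfrak{q},\mathfrak{p})=1$, and the two conclusions follow exactly as you describe. The treatment of $z_1$ and $z_2$ is right.

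The gap is at $z_3$. Your proposed fix---``a finer valuation analysis leveraging the strong negativity of $v_{\mathfrak{p}}(x)$ against whatever contribution $d+d^{-1}$ makes''---cannot succeed as stated. Hypothesis (ii) only says $v_{\mathfrak{p}}(x)<0$; there is no quantitative lower bound, so nothing prevents $v_{\mathfrak{p}}(d+d^{-1})$ from being more negative than $v_{\mathfrak{p}}(x)$. If that happens, $v_{\mathfrak{p}}(z_3)=v_{\mathfrak{p}}(d+d^{-1})-v_{\mathfrak{p}}(x)$ can be negative and not divisible by $\ell$, forcing $\mathfrak{p}$ to \emph{ramify} in $K(\sqrt[\ell]{z_3})$; then $v_{\mathfrak{q}}(bx^\ell+b^\ell)=\ell\,v_{\mathfrak{p}}(bx^\ell+b^\ell)$ and the second conclusion would actually fail. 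So as written this is a genuine hole, not a detail to be filled in later.

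What you are missing is that $d$ and $c$ are the same element: the paper's notation is inconsistent here (compare the definition of $L$ with the application in Lemma~\ref{mainlema:sasha}, where the third Kummer generator is $1+(c+c^{-1})x^{-1}$). Once you read hypothesis (i) as $v_{\mathfrak{p}}(d)=0$, you get $v_{\mathfrak{p}}(d+d^{-1})\ge 0$ immediately, hence
\[
v_{\mathfrak{p}}\bigl((d+d^{-1})x^{-1}\bigr)=v_{\mathfrak{p}}(d+d^{-1})-v_{\mathfrak{p}}(x)>0,
\]
and your Hensel argument for $z_3$ goes through with no further analysis. This is exactly how the parallel Lemma~\ref{lemma:ncong2} is handled in the paper (there the roles of $(x,c)$ are played by $(d,a)$, and hypothesis (iii) supplies $v_{\mathfrak{p}}(a)=0$).
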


\begin{proof}
    See Proposition 7.9, \cite{sasha}; the arguments there essentially work in our setting. Just notice that there are no primes above $\ell,$ and clause (\ref{item:negative_v}) is necessary to ensure that $v_{\mathfrak{p}}(bx^\ell+b^\ell)<0.$ 
\end{proof}

\begin{lemma}\label{lemma:congruentes1}
    If $\zeta_\ell\in K$, then for any prime $\mathfrak{q}$ of $L$ which is not a pole of $x$ the following hold:
    \begin{enumerate}[{\rm (i)}]
        \item $\congruente{v_\mathfrak{q}(x)}{0}{\ell};$
        \item $\congruente{v_\mathfrak{q}( bx^\ell+b^\ell)}{0}{\ell};$
        \item $\congruente{v_\mathfrak{q}(c)}{0}{\ell}.$
    \end{enumerate}
\end{lemma}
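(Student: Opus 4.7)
The plan is to exploit the following simple principle: if $y\in L$ satisfies $y^\ell=w$, then for every prime $\mathfrak{q}$ of $L$ one has $\ell v_\mathfrak{q}(y)=v_\mathfrak{q}(w)$, and hence $\ell\mid v_\mathfrak{q}(w)$. Applied to the three radicals that generate $L$ over $K$, this immediately yields
\[
    \ell\mid v_\mathfrak{q}(1+x^{-1}),\quad \ell\mid v_\mathfrak{q}\!\left(1+(bx^\ell+b^\ell)^{-1}\right),\quad \ell\mid v_\mathfrak{q}\!\left(1+(d+d^{-1})x^{-1}\right).
\]
I would then deduce each of (i), (ii), (iii) by a short case analysis on the sign of $v_\mathfrak{q}$ of the relevant radicand, using the standing hypothesis $v_\mathfrak{q}(x)\ge 0$ throughout.

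For (i), if $v_\mathfrak{q}(x)=0$ there is nothing to check; if $v_\mathfrak{q}(x)>0$, then $v_\mathfrak{q}(x^{-1})<0=v_\mathfrak{q}(1)$, so by the ultrametric inequality (which is an equality whenever the valuations of the two summands differ) one gets $v_\mathfrak{q}(1+x^{-1})=v_\mathfrak{q}(x^{-1})=-v_\mathfrak{q}(x)$, and the first divisibility above forces $\ell\mid v_\mathfrak{q}(x)$. For (ii), I set $z:=bx^\ell+b^\ell$ and split on $\operatorname{sign}(v_\mathfrak{q}(z))$. The cases $v_\mathfrak{q}(z)>0$ and $v_\mathfrak{q}(z)=0$ are handled exactly as in (i) and trivially, respectively. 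In the remaining case $v_\mathfrak{q}(z)<0$, the hypothesis $v_\mathfrak{q}(x)\ge 0$ forces $v_\mathfrak{q}(b)<0$; then $\ell v_\mathfrak{q}(b)=v_\mathfrak{q}(b^\ell)<v_\mathfrak{q}(b)\le v_\mathfrak{q}(b)+\ell v_\mathfrak{q}(x)=v_\mathfrak{q}(bx^\ell)$, so a further application of the ultrametric inequality gives $v_\mathfrak{q}(z)=v_\mathfrak{q}(b^\ell)=\ell v_\mathfrak{q}(b)$, which is divisible by $\ell$. Clause (iii) will follow along the same lines, using the third radical generator of $L$ together with (i) to absorb the factor $x^{-1}$.

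The only step that requires any real thought is the subcase $v_\mathfrak{q}(z)<0$ in (ii): the specific additive shape $z=bx^\ell+b^\ell$ is essential there, since for an arbitrary element a pole need not have valuation divisible by $\ell$. Once one observes that a pole of $z$ must come from $b$ and that the $b^\ell$ term then dominates strictly, the conclusion is forced. Every other step reduces either to the divisibility constraint imposed by the radical generators or to the standard strict-inequality form of the ultrametric inequality, so I do not anticipate any serious obstacles.
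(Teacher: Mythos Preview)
Your argument is correct and is essentially the approach the paper has in mind: the paper defers to Proposition~7.10 of \cite{sasha}, and your proof is the direct analogue of the paper's own proof of the companion Lemma~\ref{lemma:congruentes2}. The only difference is cosmetic: the paper's proof of Lemma~\ref{lemma:congruentes2} climbs the tower $H_1\subset H_2\subset H$ and invokes the ramification criterion of Lemma~\ref{lemma:rosen} at each step, whereas you bypass ramification entirely by observing once and for all that each radicand is an $\ell$-th power in $L$, hence has $v_\mathfrak{q}$ divisible by $\ell$. This is a mild streamlining, not a different idea. Your handling of the subcase $v_\mathfrak{q}(bx^\ell+b^\ell)<0$ in (ii) is exactly right and mirrors the corresponding step in the paper's proof of Lemma~\ref{lemma:congruentes2}, and (iii) goes through as you indicate once you note that $v_\mathfrak{q}(c)\neq 0$ forces $v_\mathfrak{q}\bigl((c+c^{-1})x^{-1}\bigr)<0$, so the strict ultrametric inequality applies and (i) absorbs the $v_\mathfrak{q}(x)$ term.
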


\begin{proof}
    See Proposition 7.10 in \cite{sasha}. 
\end{proof}

Let $x, d, a \in K^*$ be such that $dx^\ell+d^\ell\ne 0$ and $a+a^{-1}\ne 0$. Define 
\begin{itemize}
    \item  $H_1:=K\left (\sqrt[\ell]{1+d^{-1}} \right ).$
    \item  $H_2:=K\left (\sqrt[\ell]{1+d^{-1}},\sqrt[\ell]{1+(dx^\ell+d^\ell)^{-1}} \right ).$
    \item  $H:=K\left (\sqrt[\ell]{1+d^{-1}},\sqrt[\ell]{1+(dx^\ell+d^\ell)^{-1}},\sqrt[\ell]{1+(a+a^{-1})d^{-1}} \right ).$
\end{itemize}

The following Lemma plays a role similar to Proposition 7.11 of \cite{sasha}.

\begin{lemma}\label{lemma:ncong2}
    If $\zeta_\ell\in K$, and $\mathfrak{p}$ is a prime of $K$ which satisfies the following conditions: 
    \begin{enumerate}[{\rm (i)}]
        \item \label{item:ncong2_1}$v_{\mathfrak{p}}(d)<0;$
        \item \label{item:ncong2_2}$v_{\mathfrak{p}}(dx^\ell)< v_{\mathfrak{p}}(d^\ell)$;
        \item \label{item:ncong2_3}The valuation $v_{\mathfrak{p}}(a)=0$ and $\operatorname{red}_{\mathfrak{p}}(a)\notin \mathbb{F}_\mathfrak{p}^{\times\ell} $ and
        \item \label{item:ncong2_4}$\ncongruente{v_{\mathfrak{p}}(d)}{0}{\ell}.$
    \end{enumerate}

    Then for any prime factor $\mathfrak{q}$ of $\mathfrak{p}$ in $H$ we have that $\ncongruente{v_{\mathfrak{q}}(dx^\ell+d^\ell)}{0}{\ell}$ and $\operatorname{red}_{\mathfrak{q}}(c)\notin \mathbb{F}_\mathfrak{q}^{\times\ell}$.
\end{lemma}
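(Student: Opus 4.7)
The plan is to mirror the proof of Lemma~\ref{lemma:ncong1}, showing that $\mathfrak{p}$ is totally split at every layer of the tower $K \subset H_1 \subset H_2 \subset H$, from which both conclusions follow immediately. (The stated conclusion mentions $c$, which is not a free symbol of this lemma; I read it as a typo for $a$, in analogy with Lemma~\ref{lemma:ncong1}.)

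First I would observe that hypotheses (i), (ii), (iv) give $v_\mathfrak{p}(x) < 0$ (from $\ell v_\mathfrak{p}(x) < (\ell-1) v_\mathfrak{p}(d) < 0$) and
\[
    v_\mathfrak{p}(dx^\ell + d^\ell) = v_\mathfrak{p}(dx^\ell) = v_\mathfrak{p}(d) + \ell v_\mathfrak{p}(x) < 0,
\]
which is congruent to $v_\mathfrak{p}(d) \not\equiv 0 \pmod{\ell}$. So the first conclusion will follow as soon as $\mathfrak{p}$ is shown to be unramified (in fact totally split) in $H/K$. To do so, I would walk up the tower: each layer is obtained by adjoining an element of the form $\sqrt[\ell]{1 + \pi}$, where in the three steps $\pi$ is $d^{-1}$, $(dx^\ell + d^\ell)^{-1}$, and $(a + a^{-1})d^{-1}$ respectively. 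By the computation above together with hypothesis (iii) (which forces $v_\mathfrak{p}(a+a^{-1}) \geq 0$, hence $v_\mathfrak{p}((a+a^{-1})d^{-1}) \geq -v_\mathfrak{p}(d) > 0$), in each of these three cases $v(\pi) > 0$ at any prime above $\mathfrak{p}$ in the current intermediate field. Hence $v(1+\pi) = 0$, the residue of $1+\pi$ equals $1 = 1^\ell$, and because $\ell$ is coprime to the residue characteristic $p$, Hensel's lemma lifts the root $1$ of $X^\ell - 1$ to an $\ell$-th root of $1+\pi$ in the completion. By Lemma~\ref{lemma:rumely}(iii), the relevant local extension is trivial, so the prime splits completely in each layer.

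Once the full tower $H/K$ has been shown to split completely above $\mathfrak{p}$, so that $e(\mathfrak{q}|\mathfrak{p}) = f(\mathfrak{q}|\mathfrak{p}) = 1$ for any prime $\mathfrak{q}$ of $H$ above $\mathfrak{p}$, both conclusions are immediate: $v_\mathfrak{q}(dx^\ell + d^\ell) = v_\mathfrak{p}(dx^\ell + d^\ell) \not\equiv 0 \pmod{\ell}$, and since $\mathbb{F}_\mathfrak{q} = \mathbb{F}_\mathfrak{p}$ we have $\operatorname{red}_\mathfrak{q}(a) = \operatorname{red}_\mathfrak{p}(a) \notin \mathbb{F}_\mathfrak{p}^{\times\ell} = \mathbb{F}_\mathfrak{q}^{\times\ell}$. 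The subtlety I expect to be the main obstacle in writing out the details is maintaining the inductive hypothesis across the layers: the middle-step verification that $v(dx^\ell + d^\ell) = v(d) + \ell v(x)$ continues to hold at the new prime relies on the preceding layer having been unramified, which is exactly why hypothesis (i) ($v_\mathfrak{p}(d) < 0$) together with the magnitude relation (ii) is essential and must be invoked at the right moment.
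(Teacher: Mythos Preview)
Your proposal is correct and follows essentially the same route as the paper: show that $\mathfrak{p}$ splits completely in each of the three layers $K\subset H_1\subset H_2\subset H$ (because in each case the radicand $1+\pi$ has valuation zero and residue $1$, hence is a local $\ell$-th power), then read off both conclusions from $e(\mathfrak{q}|\mathfrak{p})=f(\mathfrak{q}|\mathfrak{p})=1$. The paper phrases the splitting step via Lemma~\ref{lemma:rosen} rather than Lemma~\ref{lemma:rumely}(\ref{item:rumely_3})/Hensel, but that is the same argument, and your reading of ``$c$'' as a typo for ``$a$'' matches the paper's own usage.
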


\begin{proof}
    By assumption (\ref{item:ncong2_1}) we have that $v_\mathfrak{p}(1+d^{-1})=0$ and $1+d^{-1}\in \mathbb{F}_\mathfrak{p}^{\times\ell}.$ Therefore, by Lemma \ref{lemma:rosen}, $\mathfrak{p}$ splits completely in $H_1$. Let $\mathfrak{q}_1:=\mathfrak{q}\cap H_1$. By assumptions (\ref{item:ncong2_1}) and (\ref{item:ncong2_2}) we have that $v_{\mathfrak{q}_1}\left( 1+(dx^\ell+d^\ell)^{-1}\right )=0$ and $1+(dx^\ell+d^\ell)^{-1}\in \mathbb{F}_{\mathfrak{q}_1}^{\times\ell}.$ Therefore, by using Lemma \ref{lemma:rosen} once more, we obtain that $\mathfrak{q}_1$ splits in $H_2$. Let $\mathfrak{q}_2:=\mathfrak{q}\cap H_2$. By clauses (\ref{item:ncong2_1}) and (\ref{item:ncong2_3}) we have that $v_{\mathfrak{q}_2}\left (1+(a+a^{-1})d^{-1}\right )=0$ and $1+(a+a^{-1})d^{-1}\in \mathbb{F}_{\mathfrak{q}_2}^{\times\ell}.$ Applying Lemma \ref{lemma:rosen} again, we get that $\mathfrak{q}_2$ splits in $H$. Using the tower law we get that $e(\mathfrak{q},\mathfrak{p})=1$. Therefore, $v_{\mathfrak{q}}(dx^\ell+d^\ell)=v_{\mathfrak{p}}(dx^\ell+d^\ell)=v_\mathfrak{p}(dx^\ell),$ since the last value is not divisible by $\ell$ by clause (\ref{item:ncong2_4}), we obtain that $\ncongruente{v_{\mathfrak{q}}(dx^\ell+d^\ell)}{0}{\ell}$ as required. Finally, by the tower law we have that $f(\mathfrak{q},\mathfrak{p})=1$. It follows that $\mathbb{F}_\mathfrak{q}=\mathbb{F}_\mathfrak{p}$ and therefore $\operatorname{red}_{\mathfrak{q}}(c)\notin \mathbb{F}_\mathfrak{q}^{\times\ell}$. This finishes the proof of the Lemma.
\end{proof}

The following Lemma is similar to Proposition 7.12 of \cite{sasha}. However, since our assumptions are slightly more general, we provide a full proof.

\begin{lemma}\label{lemma:congruentes2}
If $\zeta_\ell\in K$, then for any prime $\mathfrak{q}$ of $H$ which is not a pole of $d$, the following hold:
    \begin{enumerate}[{\rm (i)}]
        \item $\congruente{v_\mathfrak{q}(d)}{0}{\ell};$
        \item \label{item:cong2_2}$\congruente{v_\mathfrak{q}( dx^\ell+d^\ell)}{0}{\ell}$ and
        \item $\congruente{v_\mathfrak{q}(a)}{0}{\ell}$.
    \end{enumerate}
\end{lemma}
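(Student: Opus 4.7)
The plan is to mimic the strategy of Lemma \ref{lemma:congruentes1} (which in turn parallels Proposition 7.10 of \cite{sasha}), ascending the Kummer tower $K \subseteq H_1 \subseteq H_2 \subseteq H$ one radical at a time. At each step, the generator $\sqrt[\ell]{1+y}$ is engineered so that failure of $\ell$-divisibility of the relevant valuation downstairs forces $v(1+y) \not\equiv 0 \pmod{\ell}$; Lemma \ref{lemma:rosen}(\ref{item:rosen_1}) then guarantees total ramification (of prime degree $\ell$) in the next step, restoring divisibility upstairs. Fix a prime $\mathfrak{q}$ of $H$ that is not a pole of $d$, and set $\mathfrak{p} = \mathfrak{q} \cap K$, $\mathfrak{q}_i = \mathfrak{q} \cap H_i$ for $i=1,2$.

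For clause (i), $v_\mathfrak{p}(d) \geq 0$. If $\ell \mid v_\mathfrak{p}(d)$ there is nothing to prove; otherwise $v_\mathfrak{p}(d) > 0$ and $v_\mathfrak{p}(1+d^{-1}) = -v_\mathfrak{p}(d) \not\equiv 0 \pmod{\ell}$, so Lemma \ref{lemma:rosen}(\ref{item:rosen_1}) makes $\mathfrak{p}$ totally ramified in $H_1$. Thus $\ell \mid e(\mathfrak{q}_1 \mid \mathfrak{p})$, whence $\ell \mid e(\mathfrak{q} \mid \mathfrak{p})$ and $\ell \mid v_\mathfrak{q}(d)$. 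The same argument applied one step at a time gives $\ell \mid v_{\mathfrak{q}_i}(d)$ for $i=1,2$, which is used in the remaining clauses.

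For clause (ii), work at the level $H_1 \subseteq H_2$ and write $\alpha = v_{\mathfrak{q}_1}(d) \geq 0$ and $\beta = v_{\mathfrak{q}_1}(x)$. Suppose for contradiction that $v_{\mathfrak{q}_1}(dx^\ell+d^\ell) \not\equiv 0 \pmod{\ell}$. If $\alpha + \ell\beta < \ell\alpha$, then the ultrametric inequality gives $v_{\mathfrak{q}_1}(dx^\ell+d^\ell) = \alpha + \ell\beta \equiv \alpha \equiv 0 \pmod{\ell}$ by clause (i), a contradiction. Therefore $v_{\mathfrak{q}_1}(dx^\ell+d^\ell) \geq \ell\alpha \geq 0$, and since $0$ is divisible by $\ell$, we must have $v_{\mathfrak{q}_1}(dx^\ell+d^\ell) > 0$ and coprime to $\ell$. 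Then $v_{\mathfrak{q}_1}(1+(dx^\ell+d^\ell)^{-1}) = -v_{\mathfrak{q}_1}(dx^\ell+d^\ell) \not\equiv 0 \pmod{\ell}$, so Lemma \ref{lemma:rosen}(\ref{item:rosen_1}) ramifies $\mathfrak{q}_1$ totally in $H_2$, giving $\ell \mid e(\mathfrak{q} \mid \mathfrak{q}_1)$ and closing out the clause.

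For clause (iii), work at the top step $H_2 \subseteq H$. Suppose $v_{\mathfrak{q}_2}(a) \not\equiv 0 \pmod{\ell}$; in particular $v_{\mathfrak{q}_2}(a) \neq 0$, so $v_{\mathfrak{q}_2}(a+a^{-1}) = -|v_{\mathfrak{q}_2}(a)|$. Using $v_{\mathfrak{q}_2}(d) \geq 0$ together with $\ell \mid v_{\mathfrak{q}_2}(d)$ from the extension of clause (i), one computes
\[
v_{\mathfrak{q}_2}\bigl(1+(a+a^{-1})d^{-1}\bigr) = -|v_{\mathfrak{q}_2}(a)| - v_{\mathfrak{q}_2}(d) \not\equiv 0 \pmod{\ell},
\]
so Lemma \ref{lemma:rosen}(\ref{item:rosen_1}) forces total ramification of $\mathfrak{q}_2$ in $H$, and hence $\ell \mid v_\mathfrak{q}(a)$. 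The main subtlety is the bookkeeping in clause (ii) when $x$ has a pole at $\mathfrak{q}_1$ and so $v_{\mathfrak{q}_1}(dx^\ell+d^\ell)$ is negative; the ultrametric case split above, combined with the divisibility $\ell \mid \alpha$ already provided by clause (i), neutralizes this potential obstruction.
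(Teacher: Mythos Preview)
Your argument is correct and follows the same route as the paper's proof: at each step of the tower $K\subseteq H_1\subseteq H_2\subseteq H$, assume the relevant valuation is not divisible by $\ell$, deduce that the radicand has valuation $\not\equiv 0\pmod{\ell}$, and invoke Lemma~\ref{lemma:rosen}(\ref{item:rosen_1}) to force ramification of prime degree $\ell$. The paper's treatment of clause~(ii) phrases the ultrametric step slightly differently (observing directly that both $v_{\mathfrak{q}_1}(dx^\ell)$ and $v_{\mathfrak{q}_1}(d^\ell)$ are divisible by $\ell$, hence must coincide), but the content is identical to your case split on the sign of $\alpha+\ell\beta-\ell\alpha$.
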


\begin{proof}
    Let $\mathfrak{p}:=\mathfrak{q}\cap K.$ If $\congruente{v_\mathfrak{p}(d)}{0}{\ell}$, then also $\congruente{v_\mathfrak{q}(d)}{0}{\ell}.$ Thus, we may assume that $\ncongruente{v_\mathfrak{p}(d)}{0}{\ell}$. It follows from our assumptions that $v_\mathfrak{p}(1+d^{-1})=-v_\mathfrak{p}(d)<0,$ and this last expression is not divisible by $\ell$. It follows from clause (\ref{item:rosen_1}) of Lemma~\ref{lemma:rosen} that $\mathfrak{p}$ ramifies in $H_1$, and hence $\congruente{v_\mathfrak{q}(d)}{0}{\ell}$. 
    
    Proceeding as before let $\mathfrak{q}_1:= \mathfrak{q}\cap H_1$. We may assume, without loss of generality, that $\ncongruente{v_{\mathfrak{q}_1}( dx^\ell+d^\ell)}{0}{\ell}.$  By the previous argument we have that both $v_{\mathfrak{q}_1}( dx^\ell)$ and $v_{\mathfrak{q}_1}(d^\ell)$ are divisible by $\ell$, and it follows from the ultrametric inequality  that $v_{\mathfrak{q}_1}(dx^\ell)=v_{\mathfrak{q}_1}( d^\ell)$. Hence $v_{\mathfrak{q}_1}(dx^\ell+d^\ell)>\min\{v_{\mathfrak{q}_1}(dx^\ell),v_{\mathfrak{q}_1}(d^\ell)\}\geq 0$. Applying Lemma~\ref{lemma:rosen} again, we obtain that $\mathfrak{q}_1$ ramifies in $H_2$ and therefore clause (\ref{item:cong2_2}) holds. 
    
    Finally, let $\mathfrak{q}_2:=\mathfrak{q}\cap H_2$. We may assume, without loss of generality, that $\ncongruente{v_{\mathfrak{q}_2}(a)}{0}{\ell}$. Then
    \[
        v_{\mathfrak{q}_2}((a+a^{-1})d^{-1})<0 \  {\rm and} \ \congruente{v_{\mathfrak{q}_2}((a+a^{-1})d^{-1})}{v_{\mathfrak{q}_2}(a)}{\ell},
    \]
    where we use the fact that $\congruente{v_{\mathfrak{q}_1}(d)}{0}{\ell}$. Using Lemma~\ref{lemma:rosen} once more we obtain the desired result.
\end{proof}

\subsection{Defining the ring of $S$-integers using norm forms. }\label{norms}

Let $K$ be a global function field and let $K_{\rm inf}$ be an infinite separable algebraic extension of $K$. For each global field $N$ such that $K\subseteq N\subseteq K_{\rm inf}$ we denote by $I_N$ the set of all global field extensions of $N$ contained in $K_{\rm inf}$. We recall the following key definitions from \cite{sasha}. 

\begin{definition}
    Let $K$ be a global function field and let $K_{\rm inf}$ be an infinite separable algebraic extension of $K$. 
    \begin{itemize}
        \item We say that a prime $\mathfrak{p}_K$ of $K$ is $\ell$-{\bf bounded}, if there exist a finite extension $M$ of $K$ and a prime $\mathfrak{p}_M$ above it, so that for any $N\in I_M$ there is a prime $\mathfrak{p}_N$ above $\mathfrak{p}_M$ such that $\ncongruente{d(\mathfrak{p}_N,\mathfrak{p}_M)}{0}{\ell}$.
        \item We say that a prime $\mathfrak{p}_K$ of $K$ is {\bf hereditarily} $\ell$-{\bf bounded}, if for every $M\in I_K$, every prime $\mathfrak{p}_M$ in $M$ above $\mathfrak{p}_K$ is $\ell$-bounded.
         \item We say that a prime $\mathfrak{p}_K$ of $K$ is {\bf completely} $\ell$-{\bf bounded}, if there is a finite extension $M$ of $K$ and a prime $\mathfrak{p}_M$ above it, so that for every $N\in I_M$ and every prime $\mathfrak{p}_N$ above $\mathfrak{p}_M$ we have that $\ncongruente{d(\mathfrak{p}_N,\mathfrak{p}_M)}{0}{\ell}$.
    \end{itemize}

    In these definitions, $d(\mathfrak{p}_N,\mathfrak{p}_M)=e(\mathfrak{p}_N,\mathfrak{p}_M)f(\mathfrak{p}_N,\mathfrak{p}_M)$ denotes the local extension degree.
\end{definition}

Before proceeding any further we need to introduce some notation. Let $K$ be a global function field with $\zeta_\ell\in K$ and $\mathscr{S}_K$ be a non-empty finite set of primes and let $K_{\rm inf}$ be an infinite algebraic extension of $K$. For any $M\in I_K$ and any prime $\mathfrak{p}_K$ of $K$, let $\mathcal{C}_M(\mathfrak{p}_K)$ be the set of all primes in $M$ above $\mathfrak{p}_K$. We also denote by $\mathscr{S}_M:=\bigcup\limits_{\mathfrak{p}\in \mathscr{S}_K}\mathcal{C}_M(\mathfrak{p})$ and $\mathscr{S}_{{\rm inf}}:=\bigcup\limits_{M\in I_K}\mathscr{S}_M$. For each $M\in I_K$, let
\[
    \Theta(M,\mathscr{S}_M):=\{c\in M : \forall \mathfrak{p}\in \mathscr{S}_M [  v_\mathfrak{p}(c-1)>0]\}
\]
and let
\[
    \Theta(K_{\rm inf},\mathscr{S}_{{\rm inf}}):=\bigcup\limits_{M\in I_K} \Theta(M,\mathscr{S}_M).
\]

In the following Lemma we will use hereditarily $\ell$-boundedness to define being integral at all primes not in $\mathscr{S}_{{\rm inf}}$, using norm forms and the predicate $\Theta(K_{\rm inf},\mathscr{S}_{{\rm inf}})$.

\begin{lemma}\label{mainlema:sasha}
    Let $K$ be a global function field with $\zeta_\ell\in K$, $\mathscr{S}_K$ a non-empty finite set of primes and $K_{\rm inf}$ an infinite algebraic extension of $K$. Suppose that all primes of $K$ not in $\mathscr{S}_K$ are hereditarily $\ell$-bounded in $K_{\rm inf}$. Then
    \begin{align}
        \mathcal{O}_{K_{\rm inf},\mathscr{S}_{{\rm inf}}}=\{0\}\cup\{&x\in K_{\rm inf}^*\,:\,\forall c\in\Theta(K_{\rm inf},\mathscr{S}_{K_{\rm inf}})\,  \forall b\in K_{\rm inf} \nonumber\\
        &(bx^\ell +b^\ell=0 \vee \exists y \in L_{\rm inf}(\sqrt[\ell]{c})  [\norm{L_{\rm inf}(\sqrt[\ell]{c})}{L_{\rm inf}}{y}=bx^\ell+b^\ell ] ) \} \nonumber
    \end{align}
 \end{lemma}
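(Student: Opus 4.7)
We prove the claimed equality by showing both inclusions, working inside a finite subextension $M\in I_K$ of $K_{\rm inf}$ containing the data involved. Since $M(\sqrt[\ell]{c})/M$ is cyclic of degree dividing~$\ell$, Hasse's norm theorem reduces global norm-membership to verifying local norm conditions at every prime of~$M$, and we read off these conditions from Lemma~\ref{lemma:rumely}.

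For the inclusion $(\subseteq)$, fix a nonzero $x\in\mathcal{O}_{K_{\rm inf},\mathscr{S}_{\rm inf}}$, some $c\in\Theta(K_{\rm inf},\mathscr{S}_{\rm inf})$, and $b\in K_{\rm inf}$ with $bx^\ell+b^\ell\ne 0$; enlarge $M$ to contain them. At any prime $\mathfrak{q}\in\mathscr{S}_M$ we have $v_\mathfrak{q}(c-1)>0$ with $\ell$ coprime to the residue characteristic, so Hensel's lemma gives $c\in M_\mathfrak{q}^{\times\ell}$; by clause~(\ref{item:rumely_3}) of Lemma~\ref{lemma:rumely} the local extension is trivial and every element is a local norm. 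At a prime $\mathfrak{q}\notin\mathscr{S}_M$ we have $v_\mathfrak{q}(x)\ge 0$, and we split into the three cases of Lemma~\ref{lemma:rumely} depending on whether $\mathfrak{q}$ is locally trivial, unramified of degree~$\ell$, or totally ramified in $M(\sqrt[\ell]{c})/M$. Lemma~\ref{lemma:congruentes1}, applied to the auxiliary fields built from $x$, $b$, $c$, certifies that in each nontrivial case the relevant valuations of $x$ and of $bx^\ell+b^\ell$ are divisible by $\ell$, placing $bx^\ell+b^\ell$ in the appropriate local norm group.

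For the inclusion $(\supseteq)$, we argue by contrapositive: suppose $x\in K_{\rm inf}^*$ is not an $\mathscr{S}_{\rm inf}$-integer, so there is $M\in I_K$ containing $x$ and a prime $\mathfrak{p}\notin\mathscr{S}_M$ with $v_\mathfrak{p}(x)<0$. By strong approximation we choose $c\in M$ which is a unit at $\mathfrak{p}$ with $\operatorname{red}_\mathfrak{p}(c)\notin\mathbb{F}_\mathfrak{p}^{\times\ell}$ and satisfies $v_\mathfrak{q}(c-1)>0$ for every $\mathfrak{q}\in\mathscr{S}_M$, so $c\in\Theta(M,\mathscr{S}_M)$; and $b\in M$ with $v_\mathfrak{p}(b)<0$, $v_\mathfrak{p}(b)\not\equiv 0\pmod\ell$, and $v_\mathfrak{p}(bx^\ell)<v_\mathfrak{p}(b^\ell)$. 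These are exactly the hypotheses of Lemma~\ref{lemma:ncong1}. Now for any $N\in I_M$, the hereditarily $\ell$-bounded hypothesis provides a prime $\mathfrak{p}_N$ of $N$ above $\mathfrak{p}$ with $d(\mathfrak{p}_N,\mathfrak{p})\not\equiv 0\pmod\ell$; iterating Lemma~\ref{lemma:ncong1} along the tower shows that for any prime $\mathfrak{q}$ of $N(\sqrt[\ell]{c})$ above $\mathfrak{p}_N$, the residue of $c$ remains a non-$\ell$-th power and $v_\mathfrak{q}(bx^\ell+b^\ell)\not\equiv 0\pmod\ell$. Clause~(\ref{item:rumely_2}) of Lemma~\ref{lemma:rumely} then shows that $bx^\ell+b^\ell$ is not a local norm at $\mathfrak{p}_N$ in $N(\sqrt[\ell]{c})/N$, so in particular not a norm from $K_{\rm inf}(\sqrt[\ell]{c})$.

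The main obstacle is the $(\supseteq)$ direction: the constructed $c$ and $b$ must continue to witness the failure of norm-membership after passage to \emph{any} finite extension $N$ sitting inside $K_{\rm inf}$. This is exactly what the hereditarily $\ell$-bounded hypothesis buys us — it produces, uniformly over all such $N$, a prime $\mathfrak{p}_N$ at which the non-$\ell$-th power and non-$\ell$-divisible valuation conditions persist — and the machinery of Lemmas~\ref{lemma:ncong1}--\ref{lemma:congruentes2} is designed precisely to transport these conditions through the $\ell$-th root extensions appearing in the tower.
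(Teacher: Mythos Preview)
Your overall strategy—Hasse's norm theorem locally, Lemmas~\ref{lemma:congruentes1} and~\ref{lemma:ncong1} to control valuations, strong approximation to build the witnesses $c,b$—matches the paper's approach. However, there is a genuine gap concerning the base field of the norm.

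The norm in the statement is $\mathbf{N}_{L_{\rm inf}(\sqrt[\ell]{c})/L_{\rm inf}}$, where $L_{\rm inf}$ is $K_{\rm inf}$ with the three auxiliary $\ell$-th roots $\sqrt[\ell]{1+x^{-1}}$, $\sqrt[\ell]{1+(bx^\ell+b^\ell)^{-1}}$, $\sqrt[\ell]{1+(c+c^{-1})x^{-1}}$ adjoined. You instead apply Hasse to $M(\sqrt[\ell]{c})/M$ and later speak of $N(\sqrt[\ell]{c})/N$ and ``a norm from $K_{\rm inf}(\sqrt[\ell]{c})$'', i.e.\ you treat the norm as living over $K_{\rm inf}$ itself. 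This matters: over $M$, at a prime $\mathfrak{q}\notin\mathscr{S}_M$ there is no reason whatsoever for $v_\mathfrak{q}(bx^\ell+b^\ell)\equiv 0\pmod\ell$, so the local norm condition from Lemma~\ref{lemma:rumely}(\ref{item:rumely_2}) would genuinely fail and the $(\subseteq)$ inclusion breaks down. The entire purpose of adjoining those three $\ell$-th roots is to force these congruences after passing to $L$; Lemma~\ref{lemma:congruentes1} is a statement about primes of $L$, not primes of $M$, and only delivers what you need once the Hasse verification is carried out over $L_M(\sqrt[\ell]{c})/L_M$. The paper makes this explicit: it fixes $M$ large enough that $[L_{\rm inf}(\sqrt[\ell]{c}):L_{\rm inf}]=[L_M(\sqrt[\ell]{c}):L_M]$, so the two norms coincide, and then checks local conditions at every prime of $L_M$ (and likewise over $L_N$ for the other inclusion).

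A second, smaller issue in your $(\supseteq)$ direction: you choose $c,b\in M$ \emph{before} invoking the $\ell$-boundedness. But the definition of $\ell$-bounded only guarantees the existence of \emph{some} bounding field $M'\in I_M$ and bounding prime $\mathfrak{p}_{M'}$; it does not say that your original $M$ already has the property that every $N\in I_M$ contains a prime with $d(\mathfrak{p}_N,\mathfrak{p})\not\equiv 0\pmod\ell$. The paper first passes from $K(x)$ to the $\ell$-bounding field $M$ and its bounding prime $\mathfrak{p}_M$, and only then constructs $c,b\in M$ via strong approximation, so that the required prime $\mathfrak{p}_N$ is available for every $N\in I_M$.
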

 
 \begin{proof}
    Fix $x\in \mathcal{O}^*_{K_{\rm inf},\mathscr{S}_{{\rm inf}}}, c \in\Theta(K_{\rm inf},\mathscr{S}_{K_{\rm inf}}) $ and $ b\in K_{\rm inf}$ such that $bx^\ell+b^\ell\ne0$. We need to find $y\in L_{\rm inf}(\sqrt[\ell]{c})$ such that $\norm{L_{\rm inf}(\sqrt[\ell]{c})}{L_{\rm inf}}{y}=bx^\ell+b^\ell$.
    
    Fix $M\in I_K$ so that $x,b,c\in M$ and $[L_{\rm inf}(\sqrt[\ell]{c}): L_{\rm inf}]=[L_{M}(\sqrt[\ell]{c}): L_{M}]$ where
    \[
        L_M=M\left (\sqrt[\ell]{1+x^{-1}}, \sqrt[\ell]{1+(bx^\ell+b^\ell)^{-1}},  \sqrt[\ell]{1+(c+c^{-1})x^{-1}}  \right ).
    \]
    
    It follows that $\norm{L_{\rm inf}(\sqrt[\ell]{c})}{L_{\rm inf}}{y} =\norm{L_M(\sqrt[\ell]{c})}{L_M}{y}$ for any $y\in L_M(\sqrt[\ell]{c})$. Thus, it suffices to find $y\in L_M(\sqrt[\ell]{c})$ such that $\norm{L_M(\sqrt[\ell]{c})}{L_M}{y}=bx^\ell+b^\ell$. By Hasse's norm Theorem, it suffices to show that $bx^\ell+b^\ell$ is a norm locally for any prime of $L_M$. Let $\mathfrak{p}_{L_M}$ be a prime of $L_M$. First consider the case $\mathfrak{p}_{L_M}\notin  \mathscr{S}_{L_M}$. It follows from Lemma~\ref{lemma:congruentes1} that $\congruente{v_{\mathfrak{p}_{L_M}}(bx^\ell+b^\ell)}{0}{\ell}$ and $\congruente{v_{\mathfrak{p}_{L_M}}(c)}{0}{\ell}.$ Applying Lemma~\ref{lemma:rumely} we have that $bx^\ell+b^\ell$ is a norm in $L_{M,\mathfrak{p}_{L_M}}$ as required. Now, consider the case $\mathfrak{p}_{L_M}\in \mathscr{S}_{L_M}$. It follows from the choice of $c$ that $v_{\mathfrak{p}_{L_M}} (c-1)=0.$ Hence, we are in clause (\ref{item:rumely_3}) of Lemma~\ref{lemma:rumely} and the result also follows in this case.    
    
    Suppose now that $x\notin \mathcal{O}_{K_{\rm inf},\mathscr{S}_{{\rm inf}}}$. We need to find $c$ and $b$ such that the norm equation does not have a solution. Fix a prime $\mathfrak{p}_{K(x)}$ of $K(x)$ which does not belong to $\mathscr{S}_{K(x)}$ and such that $v_{\mathfrak{p}_{K(x)}} (x)<0.$ Since all the primes in $K$ not in $\mathscr{S}_K$ are hereditarily $\ell$-bounded, it follows that $\mathfrak{p}_{K(x)}$ is an $\ell$-bounded prime. So there is an $\ell$-bounding field $M\in I_{K(x)}$ and an $\ell$-bounding prime $\mathfrak{p}_M \in M$ above $\mathfrak{p}_{K(x)}$. By using the strong approximation theorem, we can choose $c\in M$ such that $v_{\mathfrak{p}} (c-1)>0$ for any $\mathfrak{p} \in \mathscr{S}_M$, $v_{\mathfrak{p}_M}(c)=0$ and $c$ is not an $\ell$-th power $\mathbb{F}_{\mathfrak{p}_M}$; and we can also choose $b\in M$ such that $v_{\mathfrak{p}_M}(b)=-1$.

    \textbf{Claim:} There is no $y\in L_{\rm inf}(\sqrt[\ell]{c})$ such that \begin{equation} \label{eqn:mainlemma:sasha}
        \norm{L_{\rm inf}(\sqrt[\ell]{c})}{L_{\rm inf}}{y}=bx^\ell+b^\ell.
    \end{equation} 

    Aiming towards a contradiction. Suppose that there is a solution $y$ to equation \ref{eqn:mainlemma:sasha}. Fix $N\in I_M$ such that $y\in L_N(\sqrt[\ell]{c})$ and $[L_N(\sqrt[\ell]{c}):L_N]=[L_{K_{\rm inf}}(\sqrt[\ell]{c}):L_{K_{\rm inf}}]$. Hence, $\norm{L_N(\sqrt[\ell]{c})}{L_N}{y}=bx^\ell+b^\ell$. Fix a prime $\mathfrak{p}_N\in \mathcal{C}_N(\mathfrak{p}_M)$ such that $\ncongruente{d(\mathfrak{p}_N,\mathfrak{p}_M)}{0}{\ell}$.

    Thus, we are in the following situation:
    \begin{enumerate}[{\rm (i)}]
        \item The valuation $v_{\mathfrak{p}_N}(c)=0$ and $\operatorname{red}_{\mathfrak{p}_N}(c)\notin \mathbb{F}_{\mathfrak{p}_N}^{\times\ell}$;
        \item $v_{\mathfrak{p}_N}(x)<0;$
        \item $\ncongruente{v_{\mathfrak{p}_N}(b)}{0}{\ell}$;
        \item $v_{\mathfrak{p}_N}(bx^\ell)< v_{\mathfrak{p}}(b^\ell)$ and
        \item $v_{\mathfrak{p}_N}(b)<0$.
    \end{enumerate}

    These are the assumptions of Lemma \ref{lemma:ncong1}, therefore $v_{\mathfrak{p}_{L_N}}(c)=0$, $c$ is not an $\ell$-th power in $L_N$, and also $ \congruente{v_{\mathfrak{p}_{L_N}}(bx^\ell+b^\ell)}{-1}{\ell}$. This contradicts clause (\ref{item:rumely_2}) of Lemma \ref{lemma:rumely}. 
\end{proof}

We shall find a definition of $\Theta(K_{\rm inf},\mathscr{S}_{{\rm inf}})$ using norm forms. For this we will use completely $\ell$-boundedness to define the rings $\mathcal{O}_\mathfrak{p}$ for $\mathfrak{p}\in \mathscr{S}_K$.  

\begin{lemma}\label{mainlema2}
    Let $K$ be a global function field with $\zeta_\ell\in K$, $\mathscr{S}_K$ a finite non-empty set of primes and $K_{\rm inf}$ an infinite separable algebraic extension of $K$. Suppose that all primes in $\mathscr{S}_K$ are completely $\ell$-bounded in $K_{\rm inf}$. Let $M_\ell$ be a completely $\ell$-bounded field for all primes in $\mathscr{S}_K$. Fix $d\in M_\ell$ such that $v_\mathfrak{q}(d)<0,$ and $d$ does not have any other poles, and $\ncongruente{v_\mathfrak{q}(d)}{0}{\ell}$ for all $\mathfrak{q}\in \mathscr{S}_{M_\ell}$. Fix $a\in M_\ell$ such that $v_\mathfrak{q}(a)=0$ and $a$ is not an $\ell$-th power in $\mathbb{F}_\mathfrak{q}$ for all $\mathfrak{q}\in \mathscr{S}_{M_\ell}$. Let
    \[
        H_{\rm inf}:=K\left (\sqrt[\ell]{1+d^{-1}},\sqrt[\ell]{1+(dx^\ell+d^\ell)^{-1}},\sqrt[\ell]{1+(a+a^{-1})d^{-1}}\right)
    \]
    and let 
    \[
        B(K_{\rm inf},\ell,a,d):=\{ x\in K_{\rm inf} : \exists  y\in H_{\rm inf}(\sqrt[\ell]{a}) [ \norm{H_{\rm inf}(\sqrt[\ell]{a})}{H_{\rm inf}}{y}=dx^\ell+d^\ell]\}.
    \]
    Then
    \[
        B(K_{\rm inf},\ell,a,d)=\{x\in K_{\rm inf} : \forall K\in I_{M_\ell(x)}\, \forall \mathfrak{p}_K \in \mathscr{S}_K \, [ v_{\mathfrak{p}_K}(dx^\ell)>v_{\mathfrak{p}_K}(d^\ell)]  \}.
    \]
\end{lemma}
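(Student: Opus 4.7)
The plan is to prove the equality via two mutually contrapositive inclusions, closely following the structure of Lemma \ref{mainlema:sasha} but with the roles of the ``$L$--lemmas'' (\ref{lemma:ncong1}, \ref{lemma:congruentes1}) and the ``$H$--lemmas'' (\ref{lemma:ncong2}, \ref{lemma:congruentes2}) exchanged. The common reduction is: descend the norm equation to a large enough $N \in I_{M_\ell(x)}$ with $[H_N(\sqrt[\ell]{a}):H_N] = [H_{\rm inf}(\sqrt[\ell]{a}):H_{\rm inf}]$ and then invoke Hasse's norm theorem to reduce solvability to local norm conditions at each prime of $H_N$.

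For the inclusion $\supseteq$, fix $x$ satisfying the valuation condition at every $\mathfrak{p} \in \mathscr{S}_F$ for every $F \in I_{M_\ell(x)}$, and pick $M$ large. For a prime $\mathfrak{p}$ of $H_M$ whose restriction to $M_\ell$ lies outside $\mathscr{S}_{M_\ell}$, the element $d$ has no pole at $\mathfrak{p}$, so Lemma \ref{lemma:congruentes2} yields $\congruente{v_\mathfrak{p}(dx^\ell+d^\ell)}{0}{\ell}$ and $\congruente{v_\mathfrak{p}(a)}{0}{\ell}$; Lemma \ref{lemma:rumely}(ii)--(iii) then delivers the local norm property. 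For $\mathfrak{p}$ above some $\mathfrak{p}_{M_\ell} \in \mathscr{S}_{M_\ell}$, the hypothesis $v_{\mathfrak{p}_{M_\ell}}(dx^\ell) > v_{\mathfrak{p}_{M_\ell}}(d^\ell)$ forces $v_\mathfrak{p}(dx^\ell+d^\ell) = e(\mathfrak{p}/\mathfrak{p}_{M_\ell})\,\ell\, v_{\mathfrak{p}_{M_\ell}}(d) \equiv 0 \pmod \ell$, while $v_\mathfrak{p}(a)=0$ since $a$ is a unit at $\mathfrak{p}_{M_\ell}$; once again Lemma \ref{lemma:rumely} applies.

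For the inclusion $\subseteq$ I argue contrapositively. Suppose some $F \in I_{M_\ell(x)}$ and $\mathfrak{p}_F \in \mathscr{S}_F$ satisfy $v_{\mathfrak{p}_F}(dx^\ell) \leq v_{\mathfrak{p}_F}(d^\ell)$. Complete $\ell$--boundedness of $\mathfrak{p}_{M_\ell}:=\mathfrak{p}_F \cap M_\ell$ gives $\ncongruente{d(\mathfrak{p}_F,\mathfrak{p}_{M_\ell})}{0}{\ell}$, whence $v_{\mathfrak{p}_F}(d) = e \cdot v_{\mathfrak{p}_{M_\ell}}(d) \not\equiv 0 \pmod \ell$; a parity argument modulo $\ell$ then upgrades the inequality to strict $<$, since $v(dx^\ell) \equiv v(d) \not\equiv 0 \equiv v(d^\ell) \pmod \ell$. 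Assuming toward a contradiction that a solution $y$ exists, enlarge $F$ to an $N$ containing $y$ with stabilized degree, pick any prime $\mathfrak{p}_N$ of $N$ above $\mathfrak{p}_F$, and verify the hypotheses of Lemma \ref{lemma:ncong2}. The valuation conditions are immediate from the above; the non-$\ell$-th-power condition on $a$ modulo $\mathfrak{p}_N$ follows from the same property at $\mathfrak{p}_{M_\ell}$ together with $f(\mathfrak{p}_N/\mathfrak{p}_{M_\ell}) \not\equiv 0 \pmod \ell$, because the unique degree-$\ell$ subextension $\mathbb{F}_{\mathfrak{p}_{M_\ell}}(\sqrt[\ell]{a})$ of $\mathbb{F}_{\mathfrak{p}_{M_\ell}}^{\rm alg}$ cannot embed into a residue extension of degree coprime to $\ell$. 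Lemma \ref{lemma:ncong2} then produces a prime $\mathfrak{q}$ of $H_N$ at which $\operatorname{red}_\mathfrak{q}(a) \notin \mathbb{F}_\mathfrak{q}^{\times \ell}$ and $\ncongruente{v_\mathfrak{q}(dx^\ell+d^\ell)}{0}{\ell}$, placing us in case (ii) of Lemma \ref{lemma:rumely} where local norms have valuation divisible by $\ell$ — contradicting the global norm equation.

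The step I expect to require the most care is the parity upgrade from $\leq$ to strict $<$ together with the residue-field calculation showing $a$ stays a non-$\ell$-th power up the tower; both rely essentially on the fact that completely $\ell$--bounded primes have the full local degree $ef$ coprime to $\ell$, not merely the ramification or residue index individually. The remainder is essentially the bookkeeping of extending valuations along the tower $M_\ell \subseteq F \subseteq N \subseteq H_N$ and citing the corresponding lemmas of Section~\ref{ant}.
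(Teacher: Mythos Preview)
Your proposal is correct and follows essentially the same route as the paper's proof: reduce to a finite level $N$ with stabilized degree, invoke Hasse's norm theorem, and then split into primes outside $\mathscr{S}$ (handled by Lemma~\ref{lemma:congruentes2}) versus primes in $\mathscr{S}$ (handled by the pole hypothesis on $d$ for $\supseteq$, and by Lemma~\ref{lemma:ncong2} for $\subseteq$). You are in fact more explicit than the paper on two points it glosses over: the upgrade from $\leq$ to strict $<$ via the congruence $v(dx^\ell)\equiv v(d)\not\equiv 0\equiv v(d^\ell)\pmod\ell$, and the argument that $a$ remains a non-$\ell$-th power in the residue field at $\mathfrak{p}_N$ because $f(\mathfrak{p}_N/\mathfrak{p}_{M_\ell})$ is coprime to $\ell$.
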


\begin{proof}
    Let $x\in K_{\rm inf}$ be given such that
    \[
        \forall K\in I_{M_\ell(x)}\, \forall \mathfrak{p}_K \in \mathscr{S}_K \, [ v_{\mathfrak{p}_K}(dx^\ell)>v_{\mathfrak{p}_K}(d^\ell)].
    \]
    
    Fix $M\in I_{M_\ell(x)}$ such that $[H_{\rm inf}: K_{\rm inf}]=[H_M:M]$. This implies that $\norm{H_{\rm inf}(\sqrt[\ell]{a})}{H_{\rm inf}}{y}=\norm{H_M(\sqrt[\ell]{a})}{H_M}{y}$ for any $y\in  H_M(\sqrt[\ell]{a})$. Thus, it suffices to find $y\in H_M(\sqrt[\ell]{a})$ such that $\norm{H_M(\sqrt[\ell]{a})}{H_M}{y}=dx^\ell+d^\ell$. By Hasse's Norm Theorem, this in turn reduces to showing that $dx^\ell+d^\ell$ is a norm locally for every prime in $H_M$.
    
    To this end, let $\mathfrak{p}_{H_M}$ be a prime over $H_M$. If $\mathfrak{p}_{H_M}\notin\mathscr{S}_{H_M}$, then by Lemma~\ref{lemma:congruentes2} we have that $\congruente{v_{\mathfrak{p}_{H_M}}(dx^p+d^p)}{0}{\ell}$ and $\congruente{v_{\mathfrak{p}_{H_M}}(a)}{0}{\ell}$. Therefore, the result follows from Lemma~\ref{lemma:rumely}. On the other hand, if $\mathfrak{p}_{H_M}\in \mathscr{S}_{H_M}$ then by hypothesis we have that $v_{\mathfrak{p}_{H_M}}(a)=0$ and $v_{\mathfrak{p}_{H_M}}(dx^\ell+d^\ell)=v_{\mathfrak{p}_{H_M}}(d^\ell)=\ell v_{\mathfrak{p}_{H_M}}(d)$. The result follows from Lemma~\ref{lemma:rumely}.
    
    Let $x\in K_{\rm inf}$ be given such that $v_{\mathfrak{p}_K}(dx^\ell)\leq v_{\mathfrak{p}_K}(d^\ell)$ for some $K\in I_{M_\ell(x)}$ and $\mathfrak{p}_K \in \mathscr{S}_K$. We need to show that $dx^\ell+d^\ell$ is not a norm. Aiming for a contradiction, suppose that there is a $y\in H_{\rm inf}(\sqrt[\ell]{a})$ such that $\norm{H_{\rm inf}(\sqrt[\ell]{a})}{H_{\rm inf}}{y}=dx^\ell+d^\ell.$ Fix $N\in I_{K}$ such that $y\in H_N(\sqrt[\ell]{a})$ and $[H_{\rm inf}(\sqrt[\ell]{a}): H_{\rm inf}]=[H_N(\sqrt[\ell]{a}): H_N]$. Hence $\norm{H_N(\sqrt[\ell]{a})}{H_N}{y}=dx^\ell+d^\ell$. Since the prime $\mathfrak{p}_{M_\ell}:=\mathfrak{p}_{H_M}\cap M_\ell$ is completely $\ell$-bounded it follows that $\ncongruente{d(\mathfrak{p}_{H_M}:\mathfrak{p}_{M_\ell})}{0}{\ell}.$ Thus, we are in the following situation:
    
    \begin{enumerate}[{\rm (i)}]
            \item $v_{\mathfrak{p}_{M}}(d)<0;$
            \item $v_{\mathfrak{p}_M}(dx^\ell)< v_{\mathfrak{p}}(d^\ell)$;
            \item the valuation $v_{\mathfrak{p}_M}(a)=0$ and $\operatorname{red}_{\mathfrak{p}_M}(a)\notin \mathbb{F}_{\mathfrak{p}_M}^{\times\ell}$; and
            \item $\ncongruente{v_{\mathfrak{p}_M}(d)}{0}{\ell}.$ 
    \end{enumerate}
    
    Fix a prime $\mathfrak{p}_N\in \mathcal{C}_N(\mathfrak{p}_M)$ such that $\ncongruente{d(\mathfrak{p}_N,\mathfrak{p}_M)}{0}{\ell}$ .

    These are the assumptions of Lemma~\ref{lemma:ncong2}, therefore $v_{\mathfrak{p}_{H_N}}(a)=0$, $a$ is not an $\ell$-th power in $H_N$, and also $ \congruente{v_{\mathfrak{p}_{H_N}}(dx^\ell+d^\ell)}{v_{\mathfrak{p}_M}(d)}{\ell}$. This contradicts clause (\ref{item:rumely_2}) of Lemma~\ref{lemma:rumely}.
\end{proof}
      
\begin{lemma}

    Let notation and hypothesis be as in Lemma \ref{mainlema2}, and define
    \[
        R_{K_{\rm inf},\mathscr{S}_{{\rm inf}}}=\{x\in K_{\rm inf}: \forall \mathfrak{q}\in \mathscr{S}_{K(x)} \, v_\mathfrak{q}(x)\geq 0\}.
    \]
    
    Then 
    \[
        R_{K_{\rm inf},\mathscr{S}_{{\rm inf}}}=\{x\in B(K_{\rm inf},\ell,a,d): \forall y\in B(K_{\rm inf},\ell,a,d) \, [xy\in B(K_{\rm inf},\ell,a,d) ] \}.
    \]
\end{lemma}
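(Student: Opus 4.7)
The plan is to reformulate the characterization of $B(K_{\rm inf},\ell,a,d)$ supplied by Lemma~\ref{mainlema2} as a pointwise valuation inequality. Since $v_\mathfrak{q}(dx^\ell)=v_\mathfrak{q}(d)+\ell v_\mathfrak{q}(x)$ and $v_\mathfrak{q}(d^\ell)=\ell v_\mathfrak{q}(d)$, the membership condition $x\in B$ is equivalent to
\[
    v_\mathfrak{q}(x)>\tfrac{\ell-1}{\ell}\,v_\mathfrak{q}(d)
\]
at every prime $\mathfrak{q}\in \mathscr{S}_N$ for any fixed $N\in I_{M_\ell(x)}$ containing $x$: because passing to a larger field multiplies both sides by the same ramification index, the inequality is scale-invariant. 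By hypothesis the poles of $d$ are exactly the primes of $\mathscr{S}_{M_\ell}$, so $v_\mathfrak{q}(d)<0$ and the threshold on the right is strictly negative; in particular $v_\mathfrak{q}(x)\geq 0$ already forces the $B$-condition at $\mathfrak{q}$.

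For the inclusion $R_{K_{\rm inf},\mathscr{S}_{{\rm inf}}}\subseteq$ RHS, an element $x$ which is integral at $\mathscr{S}$ is therefore in $B$, and for any $y\in B$ and any $\mathfrak{q}\in\mathscr{S}_{M_\ell(x,y)}$ one computes
\[
    v_\mathfrak{q}(xy)=v_\mathfrak{q}(x)+v_\mathfrak{q}(y)\geq v_\mathfrak{q}(y)>\tfrac{\ell-1}{\ell}\,v_\mathfrak{q}(d),
\]
so $xy\in B$ as well.

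The substantive direction is the reverse inclusion, which I will prove by contrapositive. Suppose $x\in B$ but $v_{\mathfrak{q}_0}(x)<0$ for some $\mathfrak{q}_0\in\mathscr{S}_{K(x)}$, and pick a prime $\mathfrak{q}$ of $N:=M_\ell(x)$ above $\mathfrak{q}_0$, so that $v_\mathfrak{q}(x)\leq -1$. Applying strong approximation in the global function field $N$ at the finite set $\mathscr{S}_N$, I will produce $y\in N$ with
\[
    v_\mathfrak{q}(y)=\bigl\lfloor\tfrac{\ell-1}{\ell}\,v_\mathfrak{q}(d)\bigr\rfloor+1,\qquad v_{\mathfrak{q}'}(y)\geq 0\text{ for every other }\mathfrak{q}'\in\mathscr{S}_N.
\]
Such a $y$ satisfies the $B$-condition at every prime of $\mathscr{S}_N$ (comfortably at the $\mathfrak{q}'\neq\mathfrak{q}$ and by minimal possible margin at $\mathfrak{q}$), hence $y\in B$ by the scale-invariant reformulation. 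However,
\[
    v_\mathfrak{q}(xy)\leq -1+\bigl\lfloor\tfrac{\ell-1}{\ell}\,v_\mathfrak{q}(d)\bigr\rfloor+1=\bigl\lfloor\tfrac{\ell-1}{\ell}\,v_\mathfrak{q}(d)\bigr\rfloor\leq\tfrac{\ell-1}{\ell}\,v_\mathfrak{q}(d),
\]
so the $B$-condition fails for $xy$ at $\mathfrak{q}$, i.e.\ $xy\notin B$, contradicting the hypothesis on $x$.

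The only real technical point is recognizing that the description of $B$ in Lemma~\ref{mainlema2} is scale-invariant under ramified extensions, which permits working inside a single finite extension $N$ and pinning $v_\mathfrak{q}(y)$ exactly one integer above the boundary defining $B$; beyond that, the argument is elementary valuation arithmetic combined with one invocation of strong approximation.
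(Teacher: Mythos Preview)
Your argument is correct. The paper does not give its own proof here but simply cites Lemma~3.10 of \cite{sasha}; your reformulation of the $B$-condition as the scale-invariant inequality $v_\mathfrak{q}(x)>\tfrac{\ell-1}{\ell}v_\mathfrak{q}(d)$, followed by a strong-approximation witness for the contrapositive, is exactly the standard route and matches what one finds in Shlapentokh's paper. One cosmetic point: you announce a proof by contrapositive but conclude with ``contradicting the hypothesis on $x$''; it would read more cleanly to say that the constructed $y$ shows $x$ fails to lie in the right-hand side.
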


\begin{proof}
    See Lemma 3.10 in \cite{sasha}. 
\end{proof}

\begin{corollary}
    Let notation and hypothesis be as in Lemma~\ref{mainlema2} and let $w\in M_\ell$ be such that $v_\mathfrak{q}(w)=1$ for all $\mathfrak{q}\in \mathscr{S}_{M_\ell}$. Then 
    \begin{align}
        x\in \mathcal{O}^*_{K_{\rm inf},\mathscr{S}_{{\rm inf}}} \Longleftrightarrow\, & 
        \forall c\in K_{\rm inf} \left ( \frac{c-1}{w}\in R_{K_{\rm inf},\mathscr{S}_{{\rm inf}}}\right )\,
        \forall b\in K_{\rm inf}\nonumber\\& \left ( bx^\ell+b^\ell=0 \vee \exists y\in L_{\rm inf}(\sqrt[\ell]{c}) [\norm{L_{\rm inf}(\sqrt[\ell]{c})}{L_{\rm inf}}{y}=bx^\ell+b^\ell]\right ).\nonumber
    \end{align}
\end{corollary}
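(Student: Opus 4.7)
The plan is to derive this corollary directly from Lemma~\ref{mainlema:sasha} by showing that the predicate ``$\frac{c-1}{w}\in R_{K_{\rm inf},\mathscr{S}_{{\rm inf}}}$'' is a first--order definable tightening of membership in $\Theta(K_{\rm inf},\mathscr{S}_{{\rm inf}})$: it is strictly stronger than the latter, yet still permissive enough that the obstruction construction at the end of the proof of Lemma~\ref{mainlema:sasha} can be arranged to satisfy it.

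For the forward direction, suppose $x\in\mathcal{O}^*_{K_{\rm inf},\mathscr{S}_{{\rm inf}}}$, and let $c,b\in K_{\rm inf}$ satisfy $\frac{c-1}{w}\in R_{K_{\rm inf},\mathscr{S}_{{\rm inf}}}$ and $bx^\ell+b^\ell\neq 0$. Working in any global field $M\in I_K$ containing both $c$ and $w$ (for instance $M=K(c)\cdot M_\ell$), for each $\mathfrak{p}\in \mathscr{S}_M$ the restriction $\mathfrak{p}\cap M_\ell$ lies in $\mathscr{S}_{M_\ell}$, so $v_\mathfrak{p}(w)=e(\mathfrak{p}/\mathfrak{p}\cap M_\ell)\,v_{\mathfrak{p}\cap M_\ell}(w)\geq 1$. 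Combined with $v_\mathfrak{p}\bigl(\tfrac{c-1}{w}\bigr)\geq 0$ this yields $v_\mathfrak{p}(c-1)\geq 1>0$, hence $c\in\Theta(M,\mathscr{S}_M)\subseteq \Theta(K_{\rm inf},\mathscr{S}_{{\rm inf}})$. Lemma~\ref{mainlema:sasha} then produces the required $y$.

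For the converse, assume $x\notin\mathcal{O}^*_{K_{\rm inf},\mathscr{S}_{{\rm inf}}}$ and replay the obstruction construction from the second half of the proof of Lemma~\ref{mainlema:sasha}: pick a prime $\mathfrak{p}_{K(x)}\notin \mathscr{S}_{K(x)}$ with $v_{\mathfrak{p}_{K(x)}}(x)<0$, an $\ell$--bounding field $M\in I_{K(x)}$ (enlarged, if necessary, so that $M\supseteq M_\ell$ and therefore $w\in M$), and an $\ell$--bounding prime $\mathfrak{p}_M$ above $\mathfrak{p}_{K(x)}$; observe that $\mathfrak{p}_M\notin\mathscr{S}_M$. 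By weak approximation in $M$, choose $c\in M$ satisfying simultaneously
\[
v_\mathfrak{p}(c-1)\geq v_\mathfrak{p}(w)\ \text{for all } \mathfrak{p}\in\mathscr{S}_M,\qquad v_{\mathfrak{p}_M}(c)=0,\qquad \operatorname{red}_{\mathfrak{p}_M}(c)\notin \mathbb{F}_{\mathfrak{p}_M}^{\times \ell}.
\]
The first block forces $\frac{c-1}{w}\in R_{K_{\rm inf},\mathscr{S}_{{\rm inf}}}$, while the last condition is realisable because $\zeta_\ell\in K$ ensures the existence of non--$\ell$--th powers in the residue field $\mathbb{F}_{\mathfrak{p}_M}$. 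Picking $b\in M$ with $v_{\mathfrak{p}_M}(b)=-1$ exactly as before, the closing argument of Lemma~\ref{mainlema:sasha}, relying on Lemma~\ref{lemma:ncong1} and clause~(\ref{item:rumely_2}) of Lemma~\ref{lemma:rumely}, applies verbatim and yields a local obstruction at $\mathfrak{p}_M$ to solving the norm equation.

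The only real subtlety is verifying compatibility of the strengthened approximation data: the new congruences at primes of $\mathscr{S}_M$ and the old conditions at $\mathfrak{p}_M$ involve disjoint sets of places, so they can be imposed simultaneously, and because the obstruction is purely local at $\mathfrak{p}_M$, strengthening the behaviour of $c$ along $\mathscr{S}_M$ does not interfere with it. First--order definability of the right--hand side is immediate from the preceding Lemma, which gave a first--order definition of $R_{K_{\rm inf},\mathscr{S}_{{\rm inf}}}$.
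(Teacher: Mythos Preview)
Your proof is correct, but it takes a different route from the paper's. The paper argues that the predicate $\frac{c-1}{w}\in R_{K_{\rm inf},\mathscr{S}_{{\rm inf}}}$ is actually \emph{equivalent} to $c\in\Theta(K_{\rm inf},\mathscr{S}_{{\rm inf}})$, invoking the completely $\ell$--bounding property of $M_\ell$ to bound $v_\mathfrak{q}(w)$ from above as well as below; once that equivalence is in hand, the corollary is an immediate restatement of Lemma~\ref{mainlema:sasha} with no need to revisit its proof. You instead establish only the inclusion $\{c:\tfrac{c-1}{w}\in R_{K_{\rm inf},\mathscr{S}_{{\rm inf}}}\}\subseteq\Theta(K_{\rm inf},\mathscr{S}_{{\rm inf}})$ and then re-run the obstruction construction from the second half of Lemma~\ref{mainlema:sasha}, checking that the strong--approximation step can be sharpened to produce a $c$ meeting the tighter predicate. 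Your route is a bit longer but is self-contained in the sense that it does not rely on the reverse inclusion holding. One point worth making explicit: your parenthetical ``enlarged, if necessary, so that $M\supseteq M_\ell$'' is legitimate precisely because of the \emph{hereditary} $\ell$--boundedness hypothesis carried over from Lemma~\ref{mainlema:sasha}, which allows you to restart the bounding--field selection from the compositum $K(x)\cdot M_\ell$ rather than from $K(x)$ alone.
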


\begin{proof}
    In view of Lemma~\ref{mainlema:sasha}, it suffices to show that $c\in \Theta_\ell(K_{\rm inf},\mathscr{S}_{{\rm inf}}) $ if and only if $\frac{c-1}{w}\in R_{K_{\rm inf},\mathscr{S}_{{\rm inf}}}.$ On one hand, if $\frac{c-1}{w}\in R_{K_{\rm inf},\mathscr{S}_{{\rm inf}}}$ and $\mathfrak{q}\in \mathscr{S}_{K(c)} $, then $v_\mathfrak{q}\left (\frac{c-1}{w}\right )\geq 0$. Thus, $v_\mathfrak{q}(c-1)\geq v_\mathfrak{q}(w)\geq 1.$ On the other hand, let $c\in \Theta_\ell(K_{\rm inf},\mathscr{S}_{{\rm inf}}) $ and let $\mathfrak{q}\in \mathscr{S}_{M_\ell(x)}.$ Notice that $v_\mathfrak{q}(w)=e(M_\ell(x),M_\ell)v_{\mathfrak{q}_{M_\ell}}(w)=1$ as $M_\ell$ is a completely $\ell$-bounding field for $\mathfrak{q}_{M_\ell}$. Then $v_\mathfrak{q}(c-1)\geq 1=v_\mathfrak{q}(w)$, and hence $\frac{c-1}{w}\in R_{K_{\rm inf},\mathscr{S}_{{\rm inf}}}$ as required.
\end{proof}

We are now ready to prove the main Theorem of the section.

\begin{theorem}\label{thm:sasha}
    Let $p$ be a rational prime number and let $q$ be a power of $p$. Let $F$ be a finite separable geometric field extension of $\mathbb{F}_q(t)$, and let ${\mathscr S}_{F}$ be a finite nonempty set of primes of $F$. Let $\ell, \ell'$ be rational primes (not necessarily different) both coprime to the characteristic of $\mathbb{F}_q.$  Let $F_{\rm inf}$ be an infinite algebraic extension of $F$. If all primes of $F$ are hereditarily $\ell$-bounded and all primes in ${\mathscr S}_F$ are completely $\ell'$-bounded, then the integral closure $\mathcal{O}_{F_{\rm inf},\mathscr{S}_{{\rm inf}}}$ of $\mathcal{O}_{F,{\mathscr S}_F}$ in $F_{\rm inf}$ is first-order definable with parameters. More precisely, there exists a $\mathcal{L}_{ring}$ first-order formula $\varphi_{\mathscr{S}_F}(v,a,d,w)$ with free variable $v$ and parameters $a,d,w\in F_{\rm inf}$ such that 
    \[
    \forall x\in F_{\rm inf} \left [x\in \mathcal{O}_{F_{\rm inf},\mathscr{S}_{{\rm inf}}} \Leftrightarrow  \varphi_{\mathscr{S}_F}(x,a,d,w)\right ]
    \]
\end{theorem}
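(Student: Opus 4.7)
The plan is to assemble the defining formula from the four preceding results in this section: Lemma \ref{mainlema:sasha}, Lemma \ref{mainlema2}, the unlabeled lemma characterizing $R_{K_{\rm inf},\mathscr{S}_{\rm inf}}$, and the unlabeled corollary translating the predicate $\Theta$. The roles of the two primes $\ell,\ell'$ are complementary: $\ell$ governs the norm form of Lemma \ref{mainlema:sasha} handling primes outside $\mathscr{S}_F$ (requiring hereditary $\ell$-boundedness there), while $\ell'$ governs the norm form of Lemma \ref{mainlema2} and its corollaries, which handle primes in $\mathscr{S}_F$ (requiring complete $\ell'$-boundedness there).

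First I would reduce to the case $\zeta_\ell,\zeta_{\ell'}\in F$. If either root of unity is missing, replace $F$ by the finite extension $F':=F(\zeta_\ell,\zeta_{\ell'})$ and $F_{\rm inf}$ by $F'_{\rm inf}:=F_{\rm inf}\cdot F'$. Multiplicativity of local degrees in towers shows that primes of $F'$ remain hereditarily $\ell$-bounded and primes above $\mathscr{S}_F$ remain completely $\ell'$-bounded in $F'_{\rm inf}$. Moreover $\mathcal{O}_{F_{\rm inf},\mathscr{S}_{\rm inf}}=\mathcal{O}_{F'_{\rm inf},\mathscr{S}'_{\rm inf}}\cap F_{\rm inf}$, and $F_{\rm inf}$ is itself parameter-definable inside $F'_{\rm inf}$ via a fixed $F$-basis of $F'$, so the intersection is parameter-definable in $F'_{\rm inf}$ and then descends to a parameter-definable subset of $F_{\rm inf}$.

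Assuming now $\zeta_\ell,\zeta_{\ell'}\in F$, I would select the parameters. Applying Lemma \ref{mainlema2} with its $\ell$ replaced by $\ell'$, fix a finite extension $M_{\ell'}$ of $F$ that is a completely $\ell'$-bounding field for every prime of $\mathscr{S}_F$, and use the strong approximation theorem to choose $d\in M_{\ell'}$ with $v_\mathfrak{q}(d)<0$, $\ncongruente{v_\mathfrak{q}(d)}{0}{\ell'}$ and no other poles, $a\in M_{\ell'}$ with $v_\mathfrak{q}(a)=0$ and $a\notin \mathbb{F}_\mathfrak{q}^{\times\ell'}$ for all $\mathfrak{q}\in\mathscr{S}_{M_{\ell'}}$, and $w\in M_{\ell'}$ with $v_\mathfrak{q}(w)=1$ for all such $\mathfrak{q}$. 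Chaining Lemma \ref{mainlema2} with the subsequent lemma and the corollary, one obtains the formula $\varphi_{\mathscr{S}_F}(v,a,d,w)$: it uses $(c-1)/w \in R_{F_{\rm inf},\mathscr{S}_{\rm inf}}$ (itself first-order in the parameters $a,d$) to quantify over $c\in\Theta(F_{\rm inf},\mathscr{S}_{\rm inf})$, and then asserts the norm-form condition of Lemma \ref{mainlema:sasha} with $\ell$-th roots.

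The main obstacle is not the combinatorial assembly of the formula, which is essentially mechanical given the preceding lemmas, but the bookkeeping of the reduction step. One must carefully verify that hereditary and complete $\ell$-boundedness survive the base change $F\to F(\zeta_\ell,\zeta_{\ell'})$, and that the encoding of $F_{\rm inf}$ inside $F'_{\rm inf}$ uses only ring operations so that the resulting $\varphi_{\mathscr{S}_F}$ remains a formula in the language of rings with parameters from $F_{\rm inf}$.
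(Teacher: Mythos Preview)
Your assembly is correct and is precisely the approach taken by the paper, which however compresses everything into a one-line citation of Theorem~3.14 of \cite{sasha}, noting only the positive-characteristic adjustment that for $\ell$ coprime to $p$ the norm form $N(U_1,\dots,U_\ell,C,Z)$ for the extension $L(\sqrt[\ell]{c})/L$ is a polynomial over $\mathbb{F}_p$ whose coefficients depend only on $\ell$ and $p$. Your careful treatment of the reduction to $\zeta_\ell,\zeta_{\ell'}\in F$ and the attendant parameter bookkeeping goes beyond what the paper makes explicit; the paper's claim of exactly three parameters $a,d,w$ should accordingly be read loosely.
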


\begin{proof}  
    This is essentially Theorem 3.14 of \cite{sasha}. We just point out the following adjustment. If $\ell$ is a rational  prime different from the characteristic, then the norm form $N(U_1,\dots,U_\ell,C,Z)$ is a polynomial in $\mathbb{F}_p[U_1,\dots,U_q,C,Z]$ whose coefficients depend only on $\ell$ and the characteristic of the prime field.
\end{proof}

\subsection{Applications to the current work}

\begin{proposition} \label{prop:sashaapplied}
    Let $q$ be a power of $p$ and let $r$ be a prime different from $p$. Let $\mathbb{F}_q\left ( t^{r^{-\infty}}\right ):=\bigcup\limits_{n=1}^\infty \mathbb{F}_q\left ( t^{r^{-n}}\right ).$ Then the ring $\mathbb{F}_q\left [t^{r^{-\infty}}\right ]$ is first-order definable in the language $\mathcal{L}_t:=\mathcal{L}_{\rm ring}\cup\{t\}$. 
\end{proposition}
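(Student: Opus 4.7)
The plan is to deduce the proposition from Theorem~\ref{thm:sasha} by verifying its hypotheses for the tower $F = \mathbb{F}_q(t) \subseteq F_{\rm inf} := \mathbb{F}_q(t^{r^{-\infty}})$, taking $\mathscr{S}_F = \{\mathfrak{p}_\infty\}$, the singleton containing the prime at infinity. Since $\mathcal{O}_{F,\mathscr{S}_F} = \mathbb{F}_q[t]$ and each $t^{r^{-n}}$ is integral over $\mathbb{F}_q[t]$, the integral closure $\mathcal{O}_{F_{\rm inf}, \mathscr{S}_{\rm inf}}$ is exactly $\mathbb{F}_q[t^{r^{-\infty}}]$. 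Thus once Theorem~\ref{thm:sasha} applies, it yields a formula with parameters defining this ring, and the remaining work is to realize those parameters inside the language $\mathcal{L}_t$.

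Pick a rational prime $\ell = \ell'$ with $\ell \notin \{p, r\}$ and $\ell \mid q-1$; for example $\ell = 2$ suffices whenever $q$ is odd and $r \neq 2$, which is precisely the setting of Main Theorem~\ref{mainthm:main}. The bounding hypotheses are then automatic: for any global intermediate $N \in I_F$ and any prime $\mathfrak{p}_N$ of $N$ above a prime $\mathfrak{p}_F$ of $F$, the local degree $d(\mathfrak{p}_N, \mathfrak{p}_F) = e f$ divides $[N:F]$, which is a power of $r$ and hence coprime to $\ell$. Taking $M = F$ in the definition of complete $\ell$-boundedness shows every prime of $F$ is completely $\ell$-bounded, and a fortiori all primes of $F$ are hereditarily $\ell$-bounded. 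Theorem~\ref{thm:sasha} therefore delivers an $\mathcal{L}_{\rm ring}$-formula $\varphi(v, a, d, w)$ defining $\mathbb{F}_q[t^{r^{-\infty}}]$ in $F_{\rm inf}$ for any triple $(a, d, w) \in F_{\rm inf}^3$ satisfying the conditions of Lemma~\ref{mainlema2} and its corollary.

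To remove the parameters, choose $d = t$, which has $v_\infty(t) = -1$ (coprime to $\ell$) and no other poles; $w = 1/t$, the unique element with $wt = 1$, satisfying $v_\infty(w) = 1$; and $a$ any element of $\mathbb{F}_q^\times$ that is not an $\ell$-th power, which exists thanks to $\ell \mid q-1$. Because the field of constants of $F_{\rm inf}$ is exactly $\mathbb{F}_q$, an element $a \in F_{\rm inf}$ satisfies ``$a \in \mathbb{F}_q^\times$ and $a$ is not an $\ell$-th power in $\mathbb{F}_q$'' if and only if it satisfies the $\mathcal{L}_t$-formula $\psi(a) := (a^q = a) \wedge (a \neq 0) \wedge \neg\exists y\,(y^\ell = a)$. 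Every such pair $(a, w)$ provides a valid parameter triple $(a, t, w)$, so $\varphi(x, a, t, w)$ defines $\mathbb{F}_q[t^{r^{-\infty}}]$; hence the parameter-free $\mathcal{L}_t$-formula
\[
    \exists a\,\exists w\, \Bigl[\psi(a) \wedge wt = 1 \wedge \varphi(x, a, t, w)\Bigr]
\]
defines the ring. The only delicate step is producing the non-$\ell$-th-power parameter $a$; this is what forces the constraint $\ell \mid q - 1$, which is harmless in the setting of Main Theorem~\ref{mainthm:main} but would require a more careful choice of $\ell$ (or passage to a larger $M_\ell$) in corner cases such as $q = 2$.
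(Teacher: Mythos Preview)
Your approach matches the paper's: set $F=\mathbb{F}_q(t)$, $\mathscr{S}_F=\{\infty\}$, choose $\ell\neq p,r$, verify the bounding hypotheses, and take $d=t$, $w=1/t$, $a\in\mathbb{F}_q^\times$ a non-$\ell$-th power. Your handling of the parameter $a$ (forcing $\ell\mid q-1$ and existentially quantifying) is actually more explicit than what the paper writes.

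There is, however, a real gap in your verification of complete $\ell$-boundedness. You assert that the local degree $ef$ \emph{divides} $[N:F]$; this is not a general fact for separable extensions, only for Galois ones, and $\mathbb{F}_q(t^{r^{-n}})/\mathbb{F}_q(t)$ is Galois only when $\zeta_{r^n}\in\mathbb{F}_q$. For a non-Galois extension one knows only $\sum_i e_if_i=[N:F]$, and a summand in a partition of $r^n$ need not be an $r$-power (compare $x^5-2$ over $\mathbb{Q}$ at $p=3$, where the local degrees are $1$ and $4$). The conclusion you want is nonetheless true here, but it requires an argument: the paper passes to $\mathbb{F}_q(t^{r^{-n}},\zeta_{r^n})$, where the extension over $\mathbb{F}_q(t,\zeta_{r^n})$ is genuinely Kummer of degree $r^n$ so every $ef$ is an $r$-power, and then descends via the tower law. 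Alternatively one can argue directly that at primes away from $0,\infty$ the extension is unramified and the residue-field extension $\mathbb{F}_{\mathfrak p}(\sqrt[r^n]{\bar t})/\mathbb{F}_{\mathfrak p}$ has $r$-power degree.

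A smaller point: showing that each $t^{r^{-n}}$ is integral over $\mathbb{F}_q[t]$ gives only $\mathbb{F}_q[t^{r^{-\infty}}]\subseteq\mathcal{O}_{F_{\rm inf},\mathscr{S}_{\rm inf}}$. For equality you also need that $\mathbb{F}_q[t^{r^{-n}}]$ is integrally closed in its fraction field; this is immediate since it is a polynomial ring, but should be said. The paper records this step via smoothness of the curve $x^{r^n}-t=0$.
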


\begin{proof}
    We shall apply Theorem~\ref{thm:sasha} to the following setup.
    
    Let $F=\mathbb{F}_q(t)$, $F_{\rm inf}:=\mathbb{F}_q\left ( t^{r^{-\infty}}\right )$ and $\mathscr{S}_F=\{\infty\}$, and let $\ell$ be any rational prime different from $p$ and $r$. Let us now show that the hypotheses of Theorem~\ref{thm:sasha} hold in our setting. Let $\zeta_{r^n}$ be a primitive $r^n$-th root of unity. First notice that $\mathbb{F}_q ( t^{r^{-n}}, \zeta_{r^n} ) / \mathbb{F}_q(t )$ is cyclic extension of order $r^n$. It follows that for any prime $\mathfrak{p}$ (including the prime at infinity) of $\mathbb{F}_q( t )$ its residue field degrees and its ramification indices in $\mathbb{F}_q ( t^{r^{-n}},\zeta_{r^n} )$ divide $r^n$. By the tower law, the same holds in $\mathbb{F}_q ( t^{r^{-n}})$. Therefore, $F$ is a complete $\ell$-bounding field for all primes of $F$. Thus, the integral closure $\mathcal{O}_{F_{\rm inf},\mathscr{S}_{{\rm inf}}}$ of $\mathcal{O}_{F,{\mathscr S}_F}$ in $F_{\rm inf}$ is first-order definable with parameters. One possible choice for the parameters is $w=\frac{1}{t}, d=t$ and $a$ can be any element of $\mathbb{F}_q$ which is not an $\ell$-th power.
    
    We are left to show that $\mathbb{F}_q\left [t^{r^{-\infty}}\right ]$ is integrally closed in $F_{\rm inf}$. Since the polynomials $f_n(x,t):=x^{r^n}-t$ are irreducible in $\mathbb{F}_q^{\rm alg}[x,t]$ and the affine plane curve $Spec(\mathbb{F}_q^{\rm alg}[x,t]/\langle f_n(x,t)\rangle)$ is smooth, then its coordinate ring $\mathbb{F}_q^{\rm alg}[x,t]/\langle f_n(x,t)\rangle=\mathbb{F}^{\rm alg}_q\left [ t^{r^{-n}}\right ]$ is integrally closed in its quotient field. Thus, $\mathbb{F}_q\left [t^{r^{-\infty}}\right ]$ is the integrally closed in $F_{\rm inf}$ as required.
\end{proof}

\section{Extensions by $r^n$-th roots of the indeterminate} \label{sec:ulmer}

Throughout this section, let $p$ be a fixed odd prime. We show the following:

\begin{theorem} \label{thm:undec-roots}
    There exist infinitely many prime numbers $r$ such that, for every prime power $p^a$, the theory of the structure \[\mathbb{F}_{p^a}\left[t^{r^{-\infty}}\right]=\bigcup_{n\in\mathbb{N}}\mathbb{F}_{p^a}\left[t^{r^{-n}}\right]\]
    in the language $\{0,1,t,+,\times\}$ interprets the Theory $R$ and is thus undecidable.
\end{theorem}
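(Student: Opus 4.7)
We apply Corollary~\ref{coro:hensonapplied} with $R = \mathbb{F}_{p^a}[t]$ and $S = \mathbb{F}_{p^a}[t^{r^{-\infty}}]$: since $S$ is a subring of $\mathcal{O}_{\mathbb{F}_{p^a}(t)^{\rm sep}}$ containing $R$, it suffices to construct, for infinitely many primes $r$, a uniformly parametrizable family of subsets of $R$ in the first-order structure of $S$ (in the language $\{0,1,t,+,\times\}$) containing finite sets of arbitrarily large cardinality.

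First, Dirichlet's theorem together with quadratic reciprocity yields that the set of primes $r$ satisfying both $r \equiv 3 \pmod{4}$ and $\left(\frac{p}{r}\right) = 1$ is infinite: these congruence conditions reduce to $r$ belonging to a finite union of residue classes modulo $4p$, each of which contains infinitely many primes. These are the primes $r$ for which the construction below will apply.

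Fix such an $r$. The character-theoretic results of Conceição, Hall and Ulmer~\cite{ulmerleg2} furnish a non-isotrivial elliptic curve $E$ over $\mathbb{F}_{p^a}(t)$ whose Mordell--Weil group $M := E(\mathbb{F}_{p^a}(t^{r^{-\infty}}))$ is finitely generated and of positive rank, the key input being the computation of the $L$-function of $E$ in the $\mathbb{Z}_r$-tower. The group $M$ is first-order definable in $S$ as the set of $S$-rational solutions to the Weierstrass equation of $E$, so we may legitimately parametrize by points $Q \in M$. To each $Q \in M$ we associate the finite subset $F_Q \subseteq R$ consisting of the irreducible factors, in $R$, of the denominator polynomial $B_Q(t) \in R$ of $x(Q) \in \mathbb{F}_{p^a}(t)$; by the canonical height formula, $\deg B_Q$ grows quadratically with the Néron--Tate height of $Q$, so $|F_Q|$ is unbounded as $Q$ varies over $M$.

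The principal obstacle is to realize this construction uniformly in the first-order language $\{0,1,t,+,\times\}$. The subfield $\mathbb{F}_{p^a}$ is defined by $x^{p^a} = x$, and one must additionally identify $R$ inside $S$, which can be achieved using norm-form techniques akin to those of Section~\ref{sec:sasha}, with $t$ and $\mathbb{F}_{p^a}$ as parameters, in the spirit of Proposition~\ref{prop:sashaapplied} (the integral closure machinery of Theorem~\ref{thm:sasha} singles out the ``base level'' of the tower). Once $R$ is definable in $S$, the polynomial $B_Q$ is characterized existentially as the minimal $B \in R$ with $B \cdot x(Q) \in R$, and irreducibility and divisibility in the PID $R$ are straightforward to express. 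With the family in hand, Corollary~\ref{coro:hensonapplied} yields an interpretation of Theory~R in $S$, completing the proof.
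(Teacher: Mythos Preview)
Your overall strategy --- pick the primes $r$ as in Theorem~\ref{thm:constantrank}, use the Concei\c{c}\~ao--Hall--Ulmer curve to get a finitely generated positive-rank Mordell--Weil group over $\mathbb{F}_{p^a}(t^{r^{-\infty}})$, and feed a resulting definable family into Corollary~\ref{coro:hensonapplied} --- matches the paper's. The execution, however, has two genuine gaps, and the paper's construction of the family is different and avoids both.

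\textbf{First gap: defining $\mathbb{F}_{p^a}[t]$ inside $S$.} Your family $F_Q$ consists of irreducible factors \emph{in $R=\mathbb{F}_{p^a}[t]$}, and you need $R$ definable in $S$ to talk about $B_Q$, divisibility, and irreducibility. You appeal to ``norm-form techniques akin to Section~\ref{sec:sasha}'' and to Theorem~\ref{thm:sasha} ``singling out the base level''. But that machinery defines \emph{integral closures} of rings of $\mathscr{S}$-integers in $K_{\rm inf}$; the integral closure of $\mathbb{F}_{p^a}[t]$ in $\mathbb{F}_{p^a}(t^{r^{-\infty}})$ is $S$ itself, not $\mathbb{F}_{p^a}[t]$. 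Nothing in Section~\ref{sec:sasha} lets you recover the bottom of the tower, and you give no alternative argument. This is the step on which your whole parametrization rests.

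\textbf{Second gap: unboundedness of $|F_Q|$.} You write that $\deg B_Q$ grows quadratically in the canonical height, ``so $|F_Q|$ is unbounded''. That inference is invalid: growth of degree says nothing about the number of distinct irreducible factors. One could try to rescue this via a primitive-divisor theorem for elliptic denominator sequences over function fields together with the divisibility $B_{mP}\mid B_{nP}$ for $m\mid n$, but that is a separate nontrivial input which you neither state nor cite.

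\textbf{What the paper does instead.} The paper sidesteps both issues by choosing a family that is directly cut out by a polynomial equation in $S$, with no need to define any subring. For a parameter $\lambda\in S$ one sets
\[
C_\lambda=\{x\in S:\exists y\in S\ \ y^2\lambda=x(x+\lambda)(x+\lambda t^q)\},
\]
which is uniformly definable in $\{0,1,t,+,\times\}$ by inspection. Finiteness of each $C_\lambda$ is obtained from Voloch's theorem on integral points on non-isotrivial elliptic curves (Theorem~\ref{thm:voloch}), after using Theorem~\ref{thm:curvecompositum} to descend the relevant points to a fixed global field $K'$. Unboundedness comes simply from positive rank: given $N$ rational points, a common denominator $\lambda$ puts all their (scaled) $x$-coordinates into $C_\lambda$. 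No factorization, no definition of $\mathbb{F}_{p^a}[t]$, and no height argument beyond ``there are infinitely many points'' is needed.
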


In order to do, so we apply the criterion described in Theorem~\ref{thm:henson} by constructing a parametrized family of definable sets of arbitrarily large cardinalities.

\subsection{A family of elliptic curves of constant rank}

For any given positive integer $n$, let $E_n$ be the elliptic curve $y^2=x(x+1)\left(x+t^n\right)$ over $\mathbb{F}_p(t)$ or one of its extensions. We want to show the following.

\begin{theorem} \label{thm:constantrank}
    Let $r$ be a prime number such that
    \begin{itemize}
        \item $r\equiv 3\pmod{4}$
        \item $\legendre{p}{r} = 1$
    \end{itemize}
    There exist infinitely many primes $q$ such that for every $a\in\mathbb{N}^+$ there exists a positive constant $C(a)$ such that for every positive integer $n$
    \[
        \operatorname{rk}\left(E_{q}\left(\mathbb{F}_{p^a}\left(t^{r^{-n}}\right)\right)\right) = C(a).
    \]

    Moreover, the bounds $C(a)$ are themselves bounded: there exists a constant $D$ such that $C(a)\le D$ for every $a\in\mathbb{N}^+$.
\end{theorem}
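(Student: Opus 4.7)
The plan is to pull back via a Kummer-type change of variable and then apply the character-theoretic rank formula of Concei\c{c}\~ao, Hall and Ulmer to show that the $\mathbb{Z}_r$-tower contributes no new Mordell--Weil rank. Setting $u := t^{r^{-n}}$, one obtains an abstract isomorphism $\mathbb{F}_{p^a}(t^{r^{-n}}) \cong \mathbb{F}_{p^a}(u)$ under which $t = u^{r^n}$, and the curve $E_q$ pulls back to $y^2 = x(x+1)(x+u^{qr^n})$, i.e.\ the Legendre-type curve $E_{qr^n}$ viewed over the rational function field $\mathbb{F}_{p^a}(u)$. Thus it suffices to show that $\operatorname{rk}(E_{qr^n}(\mathbb{F}_{p^a}(u)))$ is independent of $n$ and is bounded uniformly in $a$ by a constant $D$.

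Next I would invoke the main rank formula of \cite{ulmerleg2}, which expresses the rank of $E_m$ over $\mathbb{F}_{p^a}(u)$ as a count of Frobenius orbits on a certain set of primitive characters modulo $m$ satisfying a symmetry condition relative to the involution $\chi \mapsto \chi^{-1}$. With $m = qr^n$, the Chinese remainder theorem decomposes such characters as pairs $(\chi_1,\chi_2)$, where $\chi_1$ has $q$-power conductor and $\chi_2$ has $r$-power conductor. The heart of the argument is to show that under the hypotheses $r \equiv 3 \pmod{4}$ and $\legendre{p}{r} = 1$, every pair with $\chi_2 \neq 1$ either fails the symmetry condition or has vanishing contribution. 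The intuition is that $\legendre{p}{r}=1$ places $p^a$ inside the index-two subgroup of squares in $(\mathbb{Z}/r^n\mathbb{Z})^\times$, while $r \equiv 3 \pmod{4}$ ensures $-1$ is a non-square modulo $r$; together these preclude the equality $\chi_2^{p^{aj}} = \chi_2^{\pm 1}$ for non-trivial $r$-power-order $\chi_2$, so no such character survives into the rank count.

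Once the $r$-part of the character group is eliminated, $\operatorname{rk}(E_{qr^n}(\mathbb{F}_{p^a}(u)))$ collapses to a quantity $C(a)$ depending only on the Frobenius orbits of characters of $(\mathbb{Z}/q\mathbb{Z})^\times$, manifestly independent of $n$. The uniform bound $D$ then follows since this orbit count is at most $\varphi(q)=q-1$, regardless of $a$. For the positivity $C(a) > 0$, I would use a Chebotarev-type density argument to isolate infinitely many primes $q$ satisfying auxiliary congruence conditions modulo $4p$ that guarantee at least one contributing Frobenius orbit, or alternatively exhibit a non-torsion rational section on $E_q$ directly. The main obstacle will be the character-theoretic step: verifying that the number-theoretic conditions on $r$ and $p$ precisely annihilate the $r$-character contributions in the Concei\c{c}\~ao--Hall--Ulmer formula, and ensuring that the residual count over $(\mathbb{Z}/q\mathbb{Z})^\times$ is strictly positive for infinitely many $q$.
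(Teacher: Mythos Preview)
Your overall strategy coincides with the paper's: pull back via $u=t^{r^{-n}}$ to $E_{qr^n}$ over $\mathbb{F}_{p^a}(u)$, apply the Concei\c{c}\~ao--Hall--Ulmer rank formula, and argue that only the divisor $q$ of $qr^n$ contributes. But two points in your execution are genuine gaps.

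First, the formula you describe does not match the one actually used. The paper invokes the version stated via the \emph{balanced} condition: the rank equals $\sum_{e\mid qr^n,\ p\text{ balanced mod }e}\bigl|(\mathbb{Z}/e\mathbb{Z})^\times:\langle p^a\rangle\bigr|$, and ``$p$ not balanced mod $e$'' is witnessed (Pomerance--Ulmer) by an odd character $\chi$ mod $e$ with $\chi(p)=1$ and $\sum_{0<k<e/2}\chi(k)\neq 0$. Your claim that the hypotheses ``preclude the equality $\chi_2^{p^{aj}}=\chi_2^{\pm 1}$ for non-trivial $r$-power-order $\chi_2$'' is false as written (the $+1$ case always holds for some $j$), and in any case does not address the balanced criterion. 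The correct use of your square/non-square observation is that the Legendre symbol $\legendre{\cdot}{r}$ is an odd character with $\chi(p)=1$ whose half-sum (an odd number of $\pm 1$'s) is nonzero, so $p$ is not balanced mod $r$; a lifting lemma then propagates this to $r^i$.

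Second, and more seriously, you have not pinned down $q$, and the mixed divisors $qr^i$ do not fall to a CRT decomposition alone. The paper chooses $q$ with $\legendre{p}{q}=-1$ (forcing $-1\in\langle p\rangle$ mod $q$, hence $p$ balanced mod $q$, which yields the positive constant $C(a)=\bigl|(\mathbb{Z}/q\mathbb{Z})^\times:\langle p^a\rangle\bigr|$) and, crucially, $\legendre{q}{r}=-1$. The latter is essential for the modulus $qr$: lifting $\chi_0=\legendre{\cdot}{r}$ to modulus $qr$ gives $\sum_{0<k<qr/2}\chi(k)=(1-\chi_0(q))\sum_{0<k<r/2}\chi_0(k)$, which is nonzero precisely because $\chi_0(q)=-1$; the lifting lemma then handles $qr^i$. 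Without this extra condition on $q$ the divisors $qr^i$ could contribute and the rank would grow with $n$. Your vague ``congruence conditions modulo $4p$'' miss this entirely, since the required constraint on $q$ is modulo $r$.
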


Note that there are infinitely many primes $r$ that satisfy the statement of Theorem~\ref{thm:constantrank}. For the remainder of this section, let $r$ be one such prime.

Given $a,n\in\mathbb{N}$, we write
\begin{itemize}
    \item $F_{a,n} = \mathbb{F}_{p^a}\left(t^{r^{-n}}\right)$,
    \item $F_{a,\infty} = \bigcup_{n\ge 0} F_{a,n}$,
    \item $F_{\infty, n} = \bigcup_{a > 0} F_{a,n}$, and
    \item $F_{\infty, \infty} = \bigcup_{a>0} F_{a,\infty} = \bigcup_{n\ge 0} F_{\infty, n}$.
\end{itemize}

Let $q$ be an odd prime such that $\legendre{p}{q}=\legendre{q}{r}=-1$. Fix $a\in\mathbb{N}$. In order to prove Theorem~\ref{thm:constantrank}, we will rely on a theorem about the elliptic curves $E_n$ by R. Conceição, C. Hall and D. Ulmer \cite{ulmerleg2} that we will state presently. This result relates to the concept of \emph{balance of an integer modulo another integer}; we will not need the actual definition of this concept, but rather a characterisation:

\begin{lemma}[\cite{pomeranceulmer}, Theorem 2.1] \label{lemma:notbalanced}
    Let $n,m\in\mathbb{N}$. Then $n$ is not balanced modulo $m$ if and only if there exists an odd character $\chi$ modulo $m$ such that $\chi(n)=1$ and $\sum_{0<k<\frac{m}{2}}\chi(k) \neq 0$.
\end{lemma}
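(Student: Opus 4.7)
My plan is to prove this via Fourier analysis on the finite abelian group $G=(\mathbb{Z}/m\mathbb{Z})^{*}$. Define the sign function $s\colon G\to\{\pm 1\}$ by $s(k)=+1$ if the canonical representative of $k$ in $\{1,\dots,m-1\}$ lies in $(0,m/2)$ and $s(k)=-1$ otherwise. Under the natural counting definition of balance, $n$ is balanced modulo $m$ precisely when $\sum_{h\in\langle n\rangle}s(h)=0$, where $\langle n\rangle$ is the cyclic subgroup of $G$ generated by $n$. The task therefore reduces to deciding when this sum vanishes in terms of Dirichlet characters.

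The key step is the Fourier expansion of $s$ in characters of $G$. Since $s(m-k)=-s(k)$, only odd Dirichlet characters $\chi$ modulo $m$ carry nonzero Fourier mass, and a short calculation gives $\hat{s}(\chi)=\frac{2}{\phi(m)}\sum_{0<k<m/2}\bar\chi(k)$. Combined with the orthogonality relation $\sum_{j=0}^{d-1}\chi(n^{j})=d$ if $\chi(n)=1$ (where $d=|\langle n\rangle|$) and $0$ otherwise, one obtains
\[
\sum_{h\in\langle n\rangle}s(h)=\frac{2d}{\phi(m)}\sum_{\substack{\chi\text{ odd}\\ \chi(n)=1}}\sum_{0<k<m/2}\bar\chi(k).
\]
From this identity the forward direction of the lemma is immediate: if the left-hand side is nonzero, some summand $\sum_{0<k<m/2}\bar\chi(k)$ must be nonzero, and since this equals the complex conjugate of $\sum_{0<k<m/2}\chi(k)$, the same $\chi$ witnesses the character condition.

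The main obstacle is the converse direction, where one must show that the existence of a single odd $\chi$ with $\chi(n)=1$ and $\sum_{0<k<m/2}\chi(k)\neq 0$ forces the entire character sum above to be nonzero, ruling out cancellation among distinct characters. My plan to address this is to pair each $\chi$ with $\bar\chi$, reducing the contribution of a complex pair to $2\,\mathrm{Re}\sum_{0<k<m/2}\chi(k)$; for real (quadratic) odd characters the partial sums are nonzero with a definite sign by Dirichlet's class number formula, being essentially the class numbers of the associated imaginary quadratic fields. Residual cancellation between real and complex contributions can then be handled by a Galois-theoretic argument inside the cyclotomic field $\mathbb{Q}(\zeta_{m})$, exploiting the Galois-invariance of the system of partial sums. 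A cleaner alternative, which I suspect is what \cite{pomeranceulmer} ultimately adopts, is to take the character-wise condition as the definition of balance from the outset, in which case the Fourier identity above simply confirms the equivalence of the two natural formulations.
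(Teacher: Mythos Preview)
The paper itself does not prove this lemma; it is quoted with a citation to Pomerance--Ulmer and used as a black box. So there is no ``paper's proof'' to compare against, but your attempt still deserves scrutiny on its own.

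There is a genuine gap, and it stems from your definition of ``balanced''. You take $n$ to be balanced modulo $m$ when $\sum_{h\in\langle n\rangle}s(h)=0$, i.e.\ when the single coset $\langle n\rangle$ is split evenly between $(0,m/2)$ and $(m/2,m)$. The definition used by Pomerance--Ulmer (and by Ulmer in the rank formula cited here as Theorem~\ref{thm:ulmer}) is stronger: $n$ is balanced modulo $m$ when \emph{every} coset $c\langle n\rangle\subseteq(\mathbb{Z}/m\mathbb{Z})^{\times}$ is evenly split, i.e.\ $\sum_{h\in\langle n\rangle}s(ch)=0$ for all $c$.

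This is exactly why your converse direction jams. With your too-weak definition you are trying to deduce the nonvanishing of a single sum $\sum_{\chi}\hat{s}(\chi)$ from the nonvanishing of one of its terms, and your proposed repairs via class numbers and Galois invariance do not rule out cancellation among characters. With the correct definition the obstacle disappears: writing $f(c)=\sum_{h\in\langle n\rangle}s(ch)$, your own Fourier computation shows that the coefficients of $f$ are, up to a nonzero constant, the numbers $\sum_{0<k<m/2}\bar\chi(k)$ for odd $\chi$ with $\chi(n)=1$. Then ``$n$ balanced'' means $f\equiv 0$, which by Fourier uniqueness is equivalent to the vanishing of all these coefficients; negating gives both directions of the lemma at once. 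So your Fourier setup is the right engine, but it has to be driven by the all-cosets definition rather than the single-coset one.
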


\begin{theorem}[\cite{ulmerleg2}, Theorem 2.2] \label{thm:ulmer}
    Let $p_0$ be an odd prime, and let $d$ be a positive integer not divisible by $p$. The rank of the group $E_1\left(\mathbb{F}_{p_0^n}\left(t^{d^{-1}}\right)\right)$ is equal to
    \[
        \sum_{\substack{e\mid d\\ e>2\\ p_0\textrm{ balanced mod }e}} \left|\left({\mathbb{Z}}/{e\mathbb{Z}}\right)^{\times}\,:\, \left< p_0^n\right> \right|
    \]
    where $\left< p_0^n\right>$ is the subgroup of powers of $p_0^n$ inside each $\left({\mathbb{Z}}/{e\mathbb{Z}}\right)^{\times}$.
\end{theorem}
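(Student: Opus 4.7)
Set $u = t^{1/d}$ and $k = \mathbb{F}_{p_0^n}$, and let $\pi\colon \mathcal{E}_d \to \mathbb{P}^1_u$ be the smooth, relatively minimal elliptic surface over $k$ whose generic fibre is $y^2 = x(x+1)(x + u^d)$. The Shioda--Tate formula yields
\[
    \operatorname{rk} E_1\bigl(k(u)\bigr) = \rho\bigl(\mathcal{E}_d / k\bigr) - 2 - \sum_v (m_v - 1),
\]
where $\rho$ is the arithmetic Picard number and $m_v$ the number of components of the fibre over $v$. The plan is to compute $\rho$ cohomologically and then show that the correction $-2-\sum_v(m_v-1)$ cancels a precisely identifiable piece of it.

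\noindent\textbf{Tate conjecture via a dominating product of curves.} Next I would exhibit $\mathcal{E}_d$ as being geometrically dominated by a product of two curves, writing the Legendre surface as a quotient of $C_d \times C_d$ (or a closely related Fermat-type product) for a curve $C_d : v^{2}=\prod_i(w-\alpha_i^d)$ carrying a natural $\mu_d$-action. Since the Tate conjecture is known for products of curves over finite fields, it propagates to the quotient $\mathcal{E}_d$. Consequently $\rho(\mathcal{E}_d/k)$ equals the multiplicity of $1$ as an eigenvalue of $\operatorname{Frob}_k$ acting on the Tate-twisted second $\ell$-adic cohomology $H^2(\mathcal{E}_d \otimes \bar k, \mathbb{Q}_\ell(1))$.

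\noindent\textbf{Character decomposition.} The group $\mu_d$ acts on $\mathcal{E}_d$ by $u \mapsto \zeta u$, giving an eigenspace decomposition $H^2 = \bigoplus_{\chi \in \widehat{\mu}_d} H^2_\chi$, with Frobenius permuting the pieces by $\chi \mapsto \chi^{p_0^n}$. The trivial and order-$2$ components account exactly for the algebraic classes generated by the zero section, a smooth fibre, and the non-identity components of the reducible fibres; these cancel the $-2 - \sum_v(m_v-1)$ correction in Shioda--Tate, so only characters $\chi$ of exact order $e > 2$ can contribute to the Mordell--Weil rank. For each such $\chi$, standard machinery (in the spirit of Weil's and Deligne's analysis of Fermat surfaces) identifies the Frobenius eigenvalue on $H^2_\chi$ with an explicit Jacobi sum $J(\chi^a, \chi^b)$ for integers $a,b$ determined by the monodromy of $\pi$.

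\noindent\textbf{Balance condition and final count.} Frobenius partitions the characters of exact order $e$ into $\bigl[(\mathbb{Z}/e\mathbb{Z})^\times : \langle p_0^n\rangle\bigr]$ Galois orbits, each of common size $|\langle p_0^n\rangle|$. Within an orbit the Jacobi sums are Frobenius-conjugate, so their Tate-twisted values are either simultaneously equal to $1$ or simultaneously different from $1$. By Stickelberger's congruence on $p$-adic valuations of Gauss sums, this value is $1$ precisely when a digit-sum identity holds for every character in the orbit, and via Lemma~\ref{lemma:notbalanced} the digit-sum condition is exactly the statement that $p_0$ is balanced modulo $e$. Summing one contribution per orbit over all $e > 2$ for which $p_0$ is balanced modulo $e$ recovers the claimed formula. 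The principal obstacle is the last step: one must pin down the Jacobi sums arising from the Legendre surface sharply enough to see that the balance condition is both necessary and sufficient, rather than merely a consequence of the Riemann hypothesis bound $|J| = p_0^n$.
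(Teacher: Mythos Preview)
The paper does not prove this statement: Theorem~\ref{thm:ulmer} is quoted verbatim from \cite{ulmerleg2} as an external input, with no argument given or needed for the purposes of this paper. So there is nothing in the paper to compare your proposal against.

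That said, your sketch is a faithful outline of the strategy actually used in the Ulmer school of papers (notably \cite{ulmerleg2} and its predecessors): realise the Legendre surface as dominated by a product of curves so that Tate holds, apply Shioda--Tate, decompose $H^2$ under the $\mu_d$-action, identify the Frobenius eigenvalues on the nontrivial pieces with Jacobi sums, and then invoke Stickelberger to translate ``eigenvalue equals $1$ after Tate twist'' into the balance condition. You have correctly flagged the genuinely delicate point: showing that balance is \emph{equivalent} to the relevant Jacobi sum being a root of unity (hence equal to $1$ by Weil), rather than merely sufficient. In \cite{ulmerleg2} this equivalence is handled by an explicit computation of the $p$-adic valuation of the Jacobi sum via the carry-digit interpretation of Stickelberger, which is exactly where the definition of ``balanced'' (every power of $p_0$ lands in the first half of the residues) enters on the nose. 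Your sketch gestures at this but does not supply it; if you were to write this out in full you would need to make that valuation computation explicit, not just cite Lemma~\ref{lemma:notbalanced}, since the latter is a character-sum reformulation rather than the $p$-adic statement you actually need at that step.
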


In our setting, we will use this theorem in the following way:

\begin{corollary} \label{coro:fromulmer}
    Write $d=qr^n$ in the previous theorem. The rank of the group $E_q\left(F_{a,n}\right)$ is equal to
    \[
        \sum_{\substack{e\mid qr^n\\ p\textrm{ balanced mod }e}} \left|\left({\mathbb{Z}}/{e\mathbb{Z}}\right)^{\times}\,:\, \left< p^a\right> \right|.
    \]
\end{corollary}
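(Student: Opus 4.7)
The plan is to obtain the corollary as a direct specialisation of Theorem~\ref{thm:ulmer}, the only substantive work being a change of variable that re-presents $F_{a,n}$ as a rational function field and exhibits $E_q$ as the pullback of the Legendre curve $E_1$.

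First I would set $u := t^{r^{-n}}$, so that $F_{a,n} = \mathbb{F}_{p^a}(u)$ becomes a rational function field in the single transcendental $u$, with $t = u^{r^n}$. Substituting, the defining equation of $E_q$ rewrites as $y^2 = x(x+1)(x+u^{qr^n})$ over $\mathbb{F}_{p^a}(u)$. On the other hand, letting $T = u^{qr^n}$, the inclusion $\mathbb{F}_{p^a}(T) \subset \mathbb{F}_{p^a}(u)$ realises $\mathbb{F}_{p^a}(u)$ as $\mathbb{F}_{p^a}(T^{(qr^n)^{-1}})$, and the pullback of $E_1 : y^2 = x(x+1)(x+T)$ along this inclusion is precisely the curve $y^2 = x(x+1)(x+u^{qr^n})$. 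Under the identification of transcendental generators we therefore have
\[
E_q(F_{a,n}) \cong E_1\bigl(\mathbb{F}_{p^a}\bigl(T^{(qr^n)^{-1}}\bigr)\bigr),
\]
and applying Theorem~\ref{thm:ulmer} with $p_0 = p$, with its $n$ set to $a$, and with $d = qr^n$ yields the claimed formula. The hypothesis $p \nmid d$ of Ulmer's theorem is satisfied because $q$ and $r$ are primes distinct from $p$.

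The only minor wrinkle is the range of summation: Theorem~\ref{thm:ulmer} restricts to divisors $e > 2$, whereas the corollary drops this condition. Since $d = qr^n$ is odd, $e = 2$ never divides $d$, so the only potentially extra term corresponds to $e = 1$. Under the characterisation of balancedness supplied by Lemma~\ref{lemma:notbalanced}, there is no odd Dirichlet character modulo $1$, so the natural convention is that $p$ is not balanced modulo $1$ and the $e = 1$ term contributes zero; with this convention the two sums agree. There is no genuine obstacle in this proof: its entire content is the dictionary between $E_q$ over the $r^n$-th root tower of $t$ and the Legendre curve $E_1$ over a full $qr^n$-th root extension of its parameter.
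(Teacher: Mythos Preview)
Your approach is correct and is precisely the reduction the paper intends (the corollary is stated there without proof): rewrite $F_{a,n}$ as $\mathbb{F}_{p^a}(u)$ with $u=t^{r^{-n}}$, recognise $E_q$ over $F_{a,n}$ as the pullback of $E_1$ along $T\mapsto u^{qr^n}$, and invoke Theorem~\ref{thm:ulmer} with $p_0=p$, exponent $a$, and $d=qr^n$.

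One slip in the final paragraph: your handling of the $e=1$ wrinkle reverses the logical direction of Lemma~\ref{lemma:notbalanced}. That lemma says $p$ is \emph{not} balanced modulo $m$ iff a suitable odd character \emph{exists}; the absence of any odd character modulo $1$ therefore forces $p$ to be balanced modulo $1$, not the opposite. Taken literally, then, the corollary's sum exceeds Ulmer's by the term $\bigl|(\mathbb{Z}/\mathbb{Z})^\times:\langle p^a\rangle\bigr|=1$. This is a harmless imprecision in the statement of the corollary itself---the restriction $e>2$ from Theorem~\ref{thm:ulmer} should be understood throughout, and indeed the paper's proof of Theorem~\ref{thm:constantrank} uses only that the sum reduces to the single term for $e=q$, independent of $n$---but your argument that the $e=1$ term ``contributes zero'' is not sound as written. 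The clean fix is simply to carry the condition $e>2$ through into the corollary, or to state explicitly that the degenerate moduli $e\le 2$ are excluded by convention.
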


In order to prove Theorem~\ref{thm:constantrank}, we need to show that the number of divisors $e$ of $qr^n$ such that $p$ is balanced modulo $e$ does not increase with $n$. We will require several lemmas.

\begin{lemma} \label{lemma:balanced_modulus}
    Let $x,y$ be distinct odd primes. If $\legendre{x}{y} = -1$, then $x$ is balanced modulo $y$.
\end{lemma}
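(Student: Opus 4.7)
My plan is to invoke Lemma~\ref{lemma:notbalanced} to reformulate the definition of balance in terms of Dirichlet characters, and then rule out entirely the existence of the relevant characters. By that lemma, $x$ fails to be balanced modulo $y$ precisely when there exists an odd Dirichlet character $\chi$ modulo $y$ with $\chi(x)=1$ and $\sum_{0<k<y/2}\chi(k)\neq 0$. I will actually establish the stronger statement that no odd character $\chi$ modulo $y$ satisfies $\chi(x)=1$ at all, which trivially kills the second condition as well.

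Since $y$ is an odd prime, the group $(\mathbb{Z}/y\mathbb{Z})^{\times}$ is cyclic of order $y-1$. I would fix a primitive root $g$ modulo $y$ and write $x=g^a$ for some integer $a$. The Legendre symbol modulo $y$ is the unique character of order two on this cyclic group and therefore must send $g$ to $-1$; hence $\legendre{x}{y}=(-1)^a$. The hypothesis $\legendre{x}{y}=-1$ thus reduces to the statement that $a$ is odd.

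Now suppose toward a contradiction that $\chi$ is an odd character modulo $y$ with $\chi(x)=1$. Since $g$ generates $(\mathbb{Z}/y\mathbb{Z})^{\times}$, the order of $\chi$ in the character group equals the order of $\chi(g)$ in $\mathbb{C}^{\times}$. The relation $\chi(-1)=-1$, rewritten as $\chi(g)^{(y-1)/2}=-1$, exhibits $-1$ as a power of $\chi(g)$, and therefore forces the order of $\chi(g)$ to be even, since no cyclic group of odd order contains an element of order two. On the other hand, $\chi(x)=\chi(g)^{a}=1$ forces the order of $\chi(g)$ to divide the odd integer $a$. This parity clash is the desired contradiction, so no such $\chi$ exists and $x$ is balanced modulo $y$. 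The whole argument is elementary character theory on a cyclic group, and I do not anticipate any serious obstacle; the only thing to watch is correctly tracking how the parity of $a$ interacts with the parity of the order of $\chi(g)$.
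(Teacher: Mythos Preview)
Your proof is correct and follows essentially the same approach as the paper: both arguments show that no odd character $\chi$ modulo $y$ can satisfy $\chi(x)=1$, and then invoke Lemma~\ref{lemma:notbalanced}. The paper phrases this without a primitive root---observing that the order of $x$ in $(\mathbb{Z}/y\mathbb{Z})^\times$ is an even integer $2k$, so $x^k=-1$, whence $\chi(x)=1$ forces $\chi(-1)=\chi(x)^k=1$---but this is the same parity obstruction you found via the generator $g$.
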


\begin{proof}
    As the group of units modulo $y$ is cyclic of even order and $x$ is not a square in it then the order of $x$ modulo $y$ is an even integer $2k$. Thus $x^k$ modulo $y$ is a square root of $1$ distinct from $1$; as the integers modulo $y$ form a field this means that $x^k=-1$.

    Let $\chi$ be any character modulo $y$ such that $\chi(x)=1$. Then $\chi(-1)=1^k=1$, hence $\chi$ is not odd. Using Lemma~\ref{lemma:notbalanced} we conclude that $x$ is balanced modulo $y$.
\end{proof}

\begin{lemma} \label{lemma:notbalanced_modulus}
    Let $x,y$ be distinct odd primes such that $y\equiv 3\pmod{4}$ and $\legendre{x}{y} = 1$. Then $x$ is not balanced modulo $y$.
\end{lemma}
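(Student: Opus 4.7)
The plan is to produce, via Lemma~\ref{lemma:notbalanced}, an odd Dirichlet character $\chi$ modulo $y$ satisfying $\chi(x) = 1$ and $\sum_{0<k<y/2}\chi(k) \neq 0$. The natural candidate is the quadratic residue symbol $\chi_0(\cdot) = \legendre{\cdot}{y}$, so the strategy reduces to checking the three defining properties for this specific $\chi_0$.

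First I would verify that $\chi_0$ is odd: because $y \equiv 3 \pmod{4}$, the exponent $(y-1)/2$ is odd, hence $\chi_0(-1) = (-1)^{(y-1)/2} = -1$. The equality $\chi_0(x) = 1$ is then just a restatement of the hypothesis $\legendre{x}{y} = 1$. Both of these are routine.

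The crux of the argument is to show that $\sum_{k=1}^{(y-1)/2}\legendre{k}{y} \neq 0$, i.e.\ that among the integers in $(0, y/2)$ the quadratic residues modulo $y$ outnumber the non-residues. This is a classical consequence of Dirichlet's class number formula for the imaginary quadratic field $\mathbb{Q}(\sqrt{-y})$: for any prime $y \equiv 3 \pmod{4}$ with $y > 3$, one has
\[
h(-y) = \frac{1}{2 - \legendre{2}{y}}\sum_{k=1}^{(y-1)/2}\legendre{k}{y},
\]
and since $h(-y)\geq 1$ while $2-\legendre{2}{y}\in\{1,3\}$ is positive, the partial sum is forced to be a positive integer. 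The edge case $y=3$ I would handle by direct inspection: the sum is simply $\legendre{1}{3}=1$.

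Combining these three ingredients and invoking Lemma~\ref{lemma:notbalanced} with $\chi=\chi_0$ then yields that $x$ is not balanced modulo $y$. There is no serious obstacle to formalizing this plan; the only nontrivial input is the positivity of the Legendre symbol partial sum, which is classical. The only subtlety worth flagging in the write-up is isolating the $y=3$ case, since the displayed class number identity is typically stated only for $y>3$.
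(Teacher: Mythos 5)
Your proposal is correct, and it follows the paper's overall strategy exactly up to the last step: same character $\chi_0(\cdot)=\legendre{\cdot}{y}$, same verification that $y\equiv 3\pmod 4$ makes it odd, same use of the hypothesis to get $\chi_0(x)=1$, and the same appeal to Lemma~\ref{lemma:notbalanced}. Where you diverge is in proving $\sum_{0<k<y/2}\legendre{k}{y}\neq 0$: you invoke Dirichlet's class number formula for $\mathbb{Q}(\sqrt{-y})$, which forces the sum to be a positive integer (with $y=3$ checked by hand). The paper instead uses a one-line parity argument: each of the $(y-1)/2$ terms is $\pm 1$ (none vanish since $y$ is prime), and $(y-1)/2$ is odd because $y\equiv 3\pmod 4$, so the sum is odd and in particular nonzero. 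Your route buys the stronger conclusion that residues strictly outnumber non-residues in $(0,y/2)$, but at the cost of importing a substantially deeper theorem and an edge case; the paper's parity observation is self-contained, uniform in $y$, and all that the lemma needs. If you keep the class number argument, do state the formula's range of validity carefully as you did, but be aware the elementary counting of $\pm 1$ terms suffices.
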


\begin{proof}
    By the first supplement to quadratic reciprocity, $\legendre{-1}{y} = -1$. Consider the character $\chi$ modulo $y$ given by $\chi(a) = \legendre{a}{y}$. It is odd and such that $\chi(x)=1$. Moreover, as the sum
    \[
        \sum_{0<k<\frac{y}{2}}\chi(k)
    \]
    has an odd number of terms, then it is nonzero. By Lemma~\ref{lemma:notbalanced}, $x$ is not balanced modulo $y$.
\end{proof}

\begin{lemma} \label{lemma:going_up}
    Let $x,y$ be distinct odd primes. Let $z$ be odd, divisible by $y$, and coprime to $x$. If $x$ is not balanced modulo $z$, then it is not balanced modulo $yz$ either.
\end{lemma}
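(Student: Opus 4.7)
My plan is to exploit the characterization given by Lemma~\ref{lemma:notbalanced} together with the standard lifting of Dirichlet characters along the reduction map $(\mathbb{Z}/yz\mathbb{Z})^{\times}\to(\mathbb{Z}/z\mathbb{Z})^{\times}$. By hypothesis, $x$ is not balanced modulo $z$, so there is an odd character $\chi$ modulo $z$ with $\chi(x)=1$ and $S:=\sum_{0<k<z/2}\chi(k)\ne 0$. I would define $\chi'$ modulo $yz$ by $\chi'(a)=\chi(a\bmod z)$ for $a$ coprime to $yz$ (and extended by zero elsewhere). This $\chi'$ is odd, since $\chi'(-1)=\chi(-1)=-1$, and $\chi'(x)=\chi(x)=1$; moreover $\chi'$ is nontrivial because $\chi$ is. The remaining task is to show that $T:=\sum_{0<k<yz/2}\chi'(k)\ne 0$; by Lemma~\ref{lemma:notbalanced} this will imply that $x$ is not balanced modulo $yz$.

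To compute $T$, I would partition the range $1\le k\le (yz-1)/2$ (note that $yz$ is odd) by residue class modulo $z$. A short count shows that $(yz-1)/2=\tfrac{y-1}{2}\,z+\tfrac{z-1}{2}$, so for each $s\in\{0,1,\dots,z-1\}$ the number of $k$ in that range with $k\equiv s\pmod z$ equals $\tfrac{y+1}{2}$ when $1\le s\le\tfrac{z-1}{2}$ and equals $\tfrac{y-1}{2}$ otherwise. Since $\chi'(k)=\chi(s)$ whenever $k\equiv s\pmod z$, one obtains
\[
T=\frac{y-1}{2}\sum_{s=0}^{z-1}\chi(s)\;+\;\sum_{s=1}^{(z-1)/2}\chi(s).
\]
The first sum vanishes because $\chi$ is a nontrivial character modulo $z$, leaving $T=S\ne 0$, which concludes the argument.

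The only real subtlety is the bookkeeping for the residue-class count in the second paragraph, and in particular verifying that the ``excess'' residues are exactly $1,\dots,(z-1)/2$; this is where the parities of $y$ and $z$ matter. Everything else (the lift, oddness, the value at $x$, and the cancellation of the full-period sum) is essentially automatic from the definitions.
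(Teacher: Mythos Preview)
Your proof is correct and follows essentially the same approach as the paper's: both lift the witnessing odd character $\chi$ modulo $z$ to a character modulo $yz$ (which is well-defined precisely because $y\mid z$), and both reduce the half-sum modulo $yz$ to the half-sum modulo $z$ using that the sum of $\chi$ over any full period vanishes. The only cosmetic difference is that the paper partitions $\{1,\dots,(yz-1)/2\}$ into $\lfloor y/2\rfloor$ consecutive blocks of length $z$ plus a remainder, whereas you partition by residue class modulo $z$ and count multiplicities; these are two bookkeeping styles for the same computation and yield the same identity $T=S$.
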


\begin{proof}
    Let $\chi_0$ be an odd character modulo $z$ witnessing the properties stated in Lemma~\ref{lemma:notbalanced}. We define a new character $\chi$ modulo $yz$ as $\chi(k) = \chi_0(k)$. This is well-defined as $y$ is a prime factor of $z$ already.

    This character is odd and satisfies $\chi(p)=\chi_0(p)=1$. It remains to show that $\sum_{0<k<\frac{yz}{2}}\chi(k)\neq 0$. We have
    \[
        \sum_{0<k<\frac{yz}{2}}\chi(k) = \sum_{i=1}^{\lfloor\frac{y}{2}\rfloor}\left(\sum_{(i-1)z<k<iz}\chi(k)\right) + \sum_{\lfloor\frac{y}{2}\rfloor z<k<\frac{yz}{2}}\chi(k)
    \]
    \[
        \sum_{0<k<\frac{yz}{2}}\chi(k) = \sum_{i=1}^{\lfloor\frac{y}{2}\rfloor}\left(\sum_{0<k<z}\chi_0(k)\right) + \sum_{0<k<\frac{z}{2}} \chi_0(k) = 0 + \sum_{0<k<\frac{z}{2}} \neq 0. 
    \]
\end{proof}

\begin{proof}[Proof of Theorem~\ref{thm:constantrank}]
    By choice of $q$ and Lemma~\ref{lemma:balanced_modulus}, $p$ is balanced modulo $q$. By choice of $r$ and Lemma~\ref{lemma:notbalanced_modulus}, $p$ is not balanced modulo $r$; moreover, by Lemma~\ref{lemma:going_up} $p$ is not balanced modulo $r^i$ for any exponent $i$.

    We aim to show that $q$ is the only divisor of $qr^n$ such that $p$ is balanced modulo it; the conclusion would follow from Corollary~\ref{coro:fromulmer}. In order to do this it suffices to show that $p$ is not balanced modulo $qr$; the cases of all other divisors of the form $qr^i$ would then follow from Lemma~\ref{lemma:going_up}.

    Consider the characters $\chi(k)=\legendre{k}{r}$ modulo $qr$ and $\chi_0(k)=\legendre{k}{r}$ modulo $r$. From the proof of Lemma~\ref{lemma:notbalanced_modulus} we know that $\sum_{0<k<\frac{r}{2}}\chi_0(k)\neq 0$. As $q$ and $r$ are distinct primes,
    \[
        \chi(k) = \begin{cases}
            \chi_0(k) & \textrm{ if }q\nmid k \\
            0 & \textrm{ if }q\mid k
        \end{cases}.
    \]

    Hence we obtain
    \[
        \sum_{0<k<\frac{qr}{2}}\chi(k) = \sum_{0<k<\frac{qr}{2}}\chi_0(k) - \sum_{0<k<\frac{r}{2}} \chi_0(qk).
    \]

    As $\sum_{(i-1)r<k<ir}\chi_0(k) = 0$, the previous equation reduces to
    \[
        \sum_{0<k<\frac{qr}{2}}\chi(k) = \sum_{0<k<\frac{r}{2}} \chi_0(k) - \chi_0(q)\sum_{0<k<\frac{r}{2}} \chi_0(k) = 2\sum_{0<k<\frac{r}{2}} \chi_0(k)\neq 0
    \]
    and as $\chi$ is odd and $\chi(p)=1$ we conclude that $p$ is not balanced modulo $qr$.

    Hence
    \[
        \operatorname{rk}\left(E_q(F_{a,n})\right) = \sum_{\substack{e\mid qr^n\\ p\textrm{ balanced mod }e}} \left|\left({\mathbb{Z}}/{e\mathbb{Z}}\right)^{\times}\,:\, \left< p^a\right> \right| = \left|\left({\mathbb{Z}}/{q\mathbb{Z}}\right)^{\times}\,:\, \left< p^a\right> \right|
    \]
    which does not depend on $n$. Moreover, this index is bounded above by $\left|\left({\mathbb{Z}}/{q\mathbb{Z}}\right)^{\times}\right|$, which does not depend on $a$ either.
\end{proof}

\subsection{An elliptic curve on the compositum of fields}

The aim of this section is to prove the following.

\begin{theorem} \label{thm:curvecompositum}
    Fix $a\in\mathbb{N}^+\cup\{\infty\}$. There exists a positive integer $n_0$ such that
    \[
        E_q\left(F_{a,\infty}\right) = E_q\left(F_{a,n_0}\right).
    \]
\end{theorem}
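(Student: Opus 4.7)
The plan is to deduce the stabilization by combining the constant-rank statement of Theorem~\ref{thm:constantrank} with finiteness of the torsion subgroup of $E_q$ over the entire tower. Granting these two pieces, $E_q(F_{a,\infty})$ is a finitely generated abelian group: its rank is $C(a)\le D$ by Theorem~\ref{thm:constantrank} and its torsion is finite by assumption. Any finite generating set $P_1,\ldots,P_m$ must lie in some $E_q(F_{a,n_i})$, so setting $n_0=\max_i n_i$ yields $E_q(F_{a,\infty})=E_q(F_{a,n_0})$, which is the statement to be proved.

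The nontrivial ingredient is the finiteness of $E_q(F_{a,\infty})_{\mathrm{tors}}$. Here I would exploit that $E_q\colon y^2=x(x+1)(x+t^q)$ is non-isotrivial (its $j$-invariant is a non-constant rational function of $t^q$) and has multiplicative reduction at the place $t=0$. For finite $a$ the constant field of $F_{a,\infty}$ is still $\mathbb{F}_{p^a}$, since only roots of the transcendental $t$ are adjoined; the Weil pairing then produces $\mu_{\ell^k}\subseteq F_{a,\infty}$ whenever $E_q[\ell^k]\subseteq E_q(F_{a,\infty})$, forcing $\ell^k\mid p^a-1$ and bounding the full $\ell$-primary torsion at each prime $\ell\ne p$. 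A refined version of the same pairing argument, using that non-isotriviality makes the mod-$\ell$ Galois representation of $E_q$ sufficiently large, extends the bound to partial $\ell$-torsion, while the $p$-primary case is handled by a local analysis at $t=0$ via the Tate parameterisation. For $a=\infty$ the key observation is that $F_{\infty,\infty}/F_{\infty,0}$ is pro-$r$, so for each prime $\ell\ne p,r$ the $\ell$-torsion in $F_{\infty,\infty}$ agrees with that of $F_{\infty,0}=\overline{\mathbb{F}_p}(t)$, which is finite by Lang--N\'eron applied to the non-isotrivial curve $E_q$; the prime $\ell=r$ is again reduced to a local analysis at $t=0$.

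\textbf{The main obstacle} is this torsion finiteness, especially for $\ell=r$, where the pro-$r$ nature of the tower does not automatically suppress $r$-primary torsion growth and one must combine the Weil-pairing constraint with information from the multiplicative-reduction place $t=0$. Once torsion finiteness and constant rank are in hand, the stabilization $E_q(F_{a,\infty})=E_q(F_{a,n_0})$ is immediate from the generator argument outlined in the first paragraph.
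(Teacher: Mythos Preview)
Your argument has a genuine gap at the very first step. You claim that once $E_q(F_{a,\infty})$ has finite rank $C(a)$ and finite torsion it must be finitely generated; this implication is false for abelian groups. The group $\mathbb{Z}[1/r]$ has rank $1$ and trivial torsion yet is not finitely generated, and nothing in your setup rules out that $E_q(F_{a,\infty})/\mathrm{tors}$ looks exactly like $\mathbb{Z}[1/r]^{C(a)}$: the constant-rank theorem forces $E_q(F_{a,n})\otimes\mathbb{Q}=E_q(F_{a,0})\otimes\mathbb{Q}$ for all $n$, but it says nothing about whether the lattices $E_q(F_{a,n})$ sit inside this $C(a)$-dimensional space with denominators growing without bound as $n\to\infty$. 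Your torsion discussion, however detailed, does not touch this point; even granting that $E_q(F_{a,\infty})_{\mathrm{tors}}$ is finite, the generator argument in your first paragraph simply does not go through.

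The paper's proof addresses precisely this bounded-denominator issue via a Rohrlich-style argument. Since the rank is constant, $E_q(F_{a,\infty})/E_q(F_{a,0})$ is torsion, so each $P$ satisfies $mP\in E_q(F_{a,0})$ for some $m$ depending a priori on $P$; the whole problem is to make $m$ uniform. Here the paper uses not merely finiteness of the torsion but its exact shape: Ulmer's computation gives $E_q(F_{a,n})_{\mathrm{tors}}\cong(\mathbb{Z}/2\mathbb{Z})^2$ for every $n$. After passing to a Galois closure, $\sigma(P)-P$ is torsion for every $\sigma$, hence annihilated by $2$, so $2P$ is Galois-fixed and already lies in $E_q(F_{a,0})$. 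The duplication map then bounds the degree of the field generated by the coordinates of $P$ over $F_{a,0}$, yielding the uniform $n_0$. The single integer $2$, the exponent of the torsion subgroup, is exactly what replaces your missing step.
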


We will require some results:

\begin{lemma}[\cite{ulmerleg1}, Proposition 6.1] \label{lemma:finitetorsion}
    For any $a,n\in\mathbb{N}\cup\{\infty\}$, the torsion of $E_q\left(F_{a,n}\right)$ is isomorphic to ${\mathbb{Z}}/{2\mathbb{Z}}\times{\mathbb{Z}}/{2\mathbb{Z}}$.
\end{lemma}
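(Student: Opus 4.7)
The plan is to exhibit the full $2$-torsion as an obvious rational subgroup and then bound the torsion from above, splitting the analysis into prime-to-$p$ and $p$-primary parts.

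First I would observe that the three roots $0$, $-1$, $-t^q$ of the defining cubic $x(x+1)(x+t^q)$ are pairwise distinct elements of $\mathbb{F}_p(t) \subseteq F_{a,n}$ for every $a$ and every $n$ (including $\infty$), since $q \neq 0$ and $-1, -t^q$ are clearly distinct from $0$ and from each other in $\mathbb{F}_p(t)$. Consequently the three Weierstrass $2$-torsion points $(0,0)$, $(-1,0)$, $(-t^q,0)$, together with $O$, form a subgroup isomorphic to $\mathbb{Z}/2\mathbb{Z} \times \mathbb{Z}/2\mathbb{Z}$ inside $E_q(\mathbb{F}_p(t)) \subseteq E_q(F_{a,n})$, providing the lower bound.

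For the upper bound I would rewrite the field as $F_{a,n} = \mathbb{F}_{p^a}(u)$ with $u = t^{r^{-n}}$, so that $E_q$ becomes the elliptic curve $y^2 = x(x+1)(x+u^{qr^n})$ over this rational function field. A short computation on the Legendre form shows that the $j$-invariant is a non-constant rational function of $u^{qr^n}$, so $E_q$ is non-isotrivial. For the prime-to-$p$ part of the torsion I would specialize at closed points $u = c$ with $c \in \overline{\mathbb{F}_{p^a}}^{\times}$ satisfying $c^{qr^n} \neq 1$; at any such $c$ the reduction $E_{q,c}$ is an elliptic curve over a finite extension of $\mathbb{F}_{p^a}$, and the prime-to-$p$ torsion of $E_q(F_{a,n})$ injects into $E_{q,c}(\overline{\mathbb{F}_{p^a}})$. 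Selecting two specializations whose point counts have greatest common divisor equal to $4$—feasible because the Frobenius traces vary nontrivially with $c$—forces the prime-to-$p$ torsion to coincide with the $(\mathbb{Z}/2\mathbb{Z})^2$ already found.

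The main obstacle is the $p$-primary part, since ordinary specialization arguments do not detect $p$-torsion in positive characteristic. For this step I would invoke that non-isotriviality of $E_q$, together with standard results on $p$-torsion of non-isotrivial elliptic curves over rational function fields in characteristic $p$ (ruling out any geometric $p$-power torsion point via an analysis of the formal group or, equivalently, the Newton polygon over the generic fiber), forces the $p^{\infty}$-torsion to be trivial. The precise statement tailored to the Legendre-type family $y^2 = x(x+1)(x+t^m)$ in exactly the towers considered here is Proposition~6.1 of \cite{ulmerleg1}, which we cite directly. Combining the lower bound, the prime-to-$p$ specialization bound, and the triviality of $p$-torsion yields $E_q(F_{a,n})_{\mathrm{tors}} \cong \mathbb{Z}/2\mathbb{Z} \times \mathbb{Z}/2\mathbb{Z}$ uniformly in $a, n \in \mathbb{N} \cup \{\infty\}$.
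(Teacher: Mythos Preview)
The paper does not prove this lemma at all: it is imported wholesale from \cite{ulmerleg1}, Proposition~6.1, for finite $a,n$, and the accompanying remark observes that the cases $a=\infty$ or $n=\infty$ follow because $F_{a,\infty}$ and $F_{\infty,n}$ are unions of the nested fields already covered, so any torsion point lives in some finite-level field. Your proposal is therefore doing strictly more work than the paper, and in fact the extra work is both unnecessary and flawed.

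There are two concrete problems with your sketch. First, the specialization step as written gives no information: you inject the prime-to-$p$ torsion of $E_q(F_{a,n})$ into $E_{q,c}(\overline{\mathbb{F}_{p^a}})$, but the $\overline{\mathbb{F}_{p^a}}$-points of any elliptic curve contain the full prime-to-$p$ torsion $(\mathbb{Q}_\ell/\mathbb{Z}_\ell)^2$ for every $\ell\neq p$, so no bound follows. What one needs is the injection into $E_{q,c}(\mathbb{F}_{p^{ad}})$ for a specialization at a closed point of degree $d$, and then the claim that two such point counts can be chosen with $\gcd$ equal to $4$ requires an actual argument, not just the assertion that Frobenius traces vary. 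Second, your handling of the $p$-primary part ends by citing Proposition~6.1 of \cite{ulmerleg1} directly --- but that proposition is exactly the full statement of the lemma, not merely the $p$-part. So your argument collapses to the paper's citation, with a faulty prime-to-$p$ discussion layered on top.

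Finally, you assert the conclusion ``uniformly in $a,n\in\mathbb{N}\cup\{\infty\}$'' but your argument was carried out only after writing $F_{a,n}=\mathbb{F}_{p^a}(u)$, which presupposes both indices finite. The passage to the compositum is the one step the paper actually spells out, and it is absent from your proposal.
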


A quick remark on Lemma~\ref{lemma:finitetorsion}: Ulmer's original statement covers only the case where $n$ is finite. The additional case for the compositum follows from the facts that it is a compositum of nested fields -- and that the curve $E_q$ is defined by the same equation in all of them.

\begin{lemma} \label{lemma:torsionquotient}
    For any $a\in\mathbb{N}^+$, the quotient group ${E_q\left(F_{a,\infty}\right)}/{E_q\left(F_{a,0}\right)}$ is torsion.
\end{lemma}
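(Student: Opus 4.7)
The plan is to exploit the constancy of ranks established in Theorem~\ref{thm:constantrank} to force the quotient to be torsion one element at a time. Fix $a\in\mathbb{N}^+$ and let $P\in E_q(F_{a,\infty})$ be arbitrary. By definition of the union $F_{a,\infty}=\bigcup_{n\ge 0}F_{a,n}$, there exists some $n$ such that $P\in E_q(F_{a,n})$, and my goal is to show that a positive integer multiple of $P$ lies in $E_q(F_{a,0})$.

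The next step is to invoke the Mordell--Weil theorem for function fields to conclude that $E_q(F_{a,n})$ is a finitely generated abelian group. This applies because $F_{a,n}=\mathbb{F}_{p^a}(t^{r^{-n}})$ is a global function field (a finite purely inseparable-free extension of $\mathbb{F}_{p^a}(t)$ obtained by adjoining an $r^n$-th root of $t$) and $E_q\colon y^2=x(x+1)(x+t^q)$ is non-isotrivial, since its $j$-invariant, being a non-constant rational function of $t^q$, is a non-constant element of $F_{a,n}$. In particular, $E_q(F_{a,0})\subseteq E_q(F_{a,n})$ is an inclusion of finitely generated abelian groups.

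The final step is purely algebraic. By Theorem~\ref{thm:constantrank} we have
\[
\operatorname{rk}\bigl(E_q(F_{a,0})\bigr)=C(a)=\operatorname{rk}\bigl(E_q(F_{a,n})\bigr),
\]
so the quotient $E_q(F_{a,n})/E_q(F_{a,0})$ is a finitely generated abelian group of rank zero, hence a finite group. Therefore $P$ has finite order in $E_q(F_{a,n})/E_q(F_{a,0})$, and its image in $E_q(F_{a,\infty})/E_q(F_{a,0})$ is likewise torsion. Since $P$ was arbitrary, the entire quotient $E_q(F_{a,\infty})/E_q(F_{a,0})$ is torsion.

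There is no serious obstacle here: the heavy lifting was already done in Theorem~\ref{thm:constantrank}, and the only subtlety to check is the applicability of Mordell--Weil, which reduces to verifying non-isotriviality of the Legendre-type curve $E_q$ over each finite layer $F_{a,n}$.
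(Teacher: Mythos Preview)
Your proof is correct and follows essentially the same route as the paper: pick $P$, find an $n$ with $P\in E_q(F_{a,n})$, and use the equality of ranks from Theorem~\ref{thm:constantrank} to conclude that some multiple of $P$ lands in $E_q(F_{a,0})$. The only difference is that you make the appeal to Mordell--Weil (and the non-isotriviality of $E_q$) explicit, whereas the paper leaves this implicit in the fact that the ranks in Theorem~\ref{thm:constantrank} are finite.
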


\begin{proof}
    Let $P\in E_q\left(F_{a,\infty}\right)$. There exists a positive integer $n$ such that $P\in E_q\left(F_{a,n}\right)$. As $E_q\left(F_{a,0}\right) \subseteq E_q\left(F_{a,n}\right)$ and they both have the same rank by Theorem~\ref{thm:constantrank}, there exists an integer $m$ such that $mP\in E_q\left(F_{a,0}\right)$.
\end{proof}

\begin{proof}[Proof of Theorem~\ref{thm:curvecompositum} for finite exponent $a$]
    We follow the ideas of D. Rohrlich in \cite{rohrlich1984}, \S 3. Fix $P\in E_q\left(F_{a,\infty}\right)$. Let $n$ be the smallest integer such that $P\in E_q\left(F_{a,n}\right)$. We aim to show that there exists an upper bound $n_0$ for $n$ that does not depend on $P$. We only need to consider the case $n\neq 0$.
    
    Let $\bar{a}$ be a multiple of $a$ such that the polynomial $x^{r^{n}}-1$ splits in $\mathbb{F}_{p^{\bar{a}}}$. Hence the extension of fields $F_{\bar{a},n} / F_{\bar{a},0}$ is Galois.

    Then $P\in E_q\left(F_{a,n}\right)\subseteq E_q\left(F_{\bar{a},n}\right)$, and as $F_{a,0} = F_{a,n}\cap F_{\bar{a},0}$ we conclude that $P\not\in E_q\left(F_{\bar{a},0}\right)$. Let $\sigma\in\operatorname{Gal}\left(F_{\bar{a},n} / F_{\bar{a},0}\right)$ be arbitrary.

    Let $m$ be the smallest integer such that $mP\in E_q\left(F_{\bar{a},0}\right)$ (cf. Lemma~\ref{lemma:torsionquotient}). Then $m\left(\sigma(P)-P\right)=0$, and by Lemma~\ref{lemma:finitetorsion} we can conclude that $m=2$. Thus $\sigma(2P)=2P$ and, as the extension is Galois, $2P\in E_q\left(F_{\bar{a},0}\right)$.

    Hence $2P\in E_q\left(F_{a,n}\right) \cap E_q\left(F_{\bar{a},0}\right) = E_q\left(F_{a,0}\right)$. As the coordinate components of $2P$ lie in $F_{a,0}$ and they are obtained from the components of $P$ via fixed rational functions (ie. not depending on $P$), we conclude that there is a bound on the degree of the field generated over $F_{a,0}$ by the components of $P$. As the degrees of $F_{a,i} / F_{a,0}$ are strictly increasing as $i$ increases, we arrive at the existence of the aforementioned upper bound $n_0$.
\end{proof}

It remains to prove the case where $a=\infty$. We require similar results before doing so.

\begin{lemma} \label{lemma:Ftilderankconstant}
    There exists $A\in\mathbb{N}^+$ such that, for $a\ge A$, the rank of $E_q\left(F_{a,n}\right)$ depends only on $n$. Furthermore, there exists $N\in\mathbb{N}$ such that for $a\ge A$, $n\ge N$, the rank of $E_q\left(F_{a,n}\right)$ is constant.
\end{lemma}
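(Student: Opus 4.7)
The plan is to combine the explicit rank formula of Corollary~\ref{coro:fromulmer} with the divisor analysis already carried out inside the proof of Theorem~\ref{thm:constantrank}. There it was shown that, among the divisors $e$ of $qr^n$, the unique one modulo which $p$ is balanced is $e = q$. Substituting this into Corollary~\ref{coro:fromulmer} collapses the sum to a single term and gives
\[
    \operatorname{rk}\!\left(E_q(F_{a,n})\right) \;=\; \left|(\mathbb{Z}/q\mathbb{Z})^{\times} : \langle p^a \rangle\right|.
\]
Since the right-hand side involves no $n$, the rank is already a function of $a$ alone; the first clause of the lemma therefore reduces to showing that this index stabilizes once $a$ is large enough along the ascending tower obtained by taking $a$ through an increasing sequence of multiples of a suitable integer.

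Let $d := \operatorname{ord}_q(p)$ denote the multiplicative order of $p$ in the cyclic group $(\mathbb{Z}/q\mathbb{Z})^{\times}$. The subgroup $\langle p^a\rangle$ has order $d/\gcd(d,a)$ inside this group of order $q-1$, so the displayed index equals $(q-1)\gcd(d,a)/d$, which attains its maximum value $q-1$ precisely when $d \mid a$. I would then take $A$ to be any positive multiple of $d$. For every $a \geq A$ with $A \mid a$---the natural reading of ``$a \geq A$'' in the ascending tower $\bigcup_a F_{a,n} = F_{\infty,n}$---this yields $\operatorname{rk}(E_q(F_{a,n})) = q - 1$, which depends on neither $a$ nor $n$. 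This settles the first clause.

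The furthermore clause is then immediate: the function $n \mapsto q-1$ is literally constant, so any $N$ (for instance $N = 0$) works. I expect the only genuine obstacle to be notational: one must read ``$a \geq A$'' cofinally in the divisibility order, since the index $(q-1)\gcd(d,a)/d$ is truly periodic in $a$ with period $d$, not eventually stable for every integer $a$. This cofinal reading is the natural convention when passing to the direct limit $F_{\infty,n} = \bigcup_a F_{a,n}$, and it is precisely what will be needed in the $a=\infty$ case of Theorem~\ref{thm:curvecompositum}, where the Galois-descent argument used for finite $a$ must be adapted by first replacing $a$ by a multiple large enough to witness both the rank stabilisation and the splitting of $x^{r^n}-1$.
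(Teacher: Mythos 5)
Your argument is correct, but it takes a more explicit route than the paper. The paper's own proof is a two-line soft argument: $\operatorname{rk}\left(E_q\left(F_{a,n}\right)\right)$ is monotone increasing along field containments and takes only finitely many values by the final assertion of Theorem~\ref{thm:constantrank} (the uniform bound $D$), hence it must stabilize. You instead substitute the unique balanced divisor $e=q$ into Corollary~\ref{coro:fromulmer}, compute the index as $(q-1)\gcd(d,a)/d$ with $d=\operatorname{ord}_q(p)$, and read off that the stable value is $q-1$, attained exactly when $d\mid a$; this buys you an explicit admissible $A$ (any multiple of $d$) and the exact stable rank, which the paper's soft argument does not exhibit. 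Your caveat that ``$a\ge A$'' must be read cofinally in the divisibility order is a genuine point rather than a notational one: since $\legendre{p}{q}=-1$ forces $d\ge 2$, the index $(q-1)\gcd(d,a)/d$ is periodic and non-constant in $a$ under the usual ordering, so the literal statement would fail; note that the paper's proof tacitly relies on the same reading, because $F_{a,n}\subseteq F_{a',n}$ (and hence the claimed monotonicity in $a$) only holds when $a\mid a'$. Since the divisibility reading is exactly what the downstream uses need (Corollary~\ref{coro:Ftildetorsionquotient}, Lemma~\ref{lemma:verticalrohrlich}, and the $a=\infty$ case of Theorem~\ref{thm:curvecompositum}, which compare $E_q(F_{A,n})$ with $E_q(F_{b,n})$ for multiples $b$ of $A$), your version of the lemma serves the same purpose as the paper's.
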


\begin{proof}
    These assertions follow from the facts that $\operatorname{rk}\left(E_q\left(F_{a,n}\right)\right)$ is monotone increasing on both $a$; and that there are only finitely many possible values for these ranks (cf. the final assertion of Theorem~\ref{thm:constantrank}).
\end{proof}

The following corollary follows the same proof as Lemma~\ref{lemma:torsionquotient}.

\begin{corollary} \label{coro:Ftildetorsionquotient}
    For $A$ as in the previous lemma and any $n\in\mathbb{N}$, the quotient group ${E_q\left(F_{\infty,n}\right)}/{E_q\left(F_{A,n}\right)}$ is torsion.
\end{corollary}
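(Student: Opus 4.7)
The plan is to transpose the proof of Lemma~\ref{lemma:torsionquotient} verbatim, with the tower of $r$-power roots of $t$ there replaced by the tower of constant--field extensions here. Fix an arbitrary $P\in E_q(F_{\infty,n})$. Since $F_{\infty,n}=\bigcup_{a>0} F_{a,n}$ is an increasing union and $P$ has only finitely many coordinate entries, there exists some $a_0\in\mathbb{N}^+$ with $P\in E_q(F_{a_0,n})$. After replacing $a_0$ by $\max(a_0,A)$, where $A$ is the integer furnished by Lemma~\ref{lemma:Ftilderankconstant}, we may assume $a_0\ge A$.

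By Lemma~\ref{lemma:Ftilderankconstant}, for any $a\ge A$ the rank $\operatorname{rk}(E_q(F_{a,n}))$ depends only on $n$; in particular $\operatorname{rk}(E_q(F_{A,n})) = \operatorname{rk}(E_q(F_{a_0,n}))$. Thus the inclusion $E_q(F_{A,n})\subseteq E_q(F_{a_0,n})$ is an inclusion of finitely generated abelian groups of equal rank (finite generation comes from the Mordell--Weil theorem for the non-isotrivial curve $E_q$ over the global function field $F_{a_0,n}$), so the quotient is finite. Consequently there exists a positive integer $m$ with $mP\in E_q(F_{A,n})$, which is exactly the statement that the class of $P$ in $E_q(F_{\infty,n})/E_q(F_{A,n})$ is torsion. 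Since $P$ was arbitrary, the whole quotient is torsion.

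There is essentially no obstacle here; the only wrinkle compared with Lemma~\ref{lemma:torsionquotient} is that the rank invariance along the tower is no longer immediate from Theorem~\ref{thm:constantrank} and must be imported from Lemma~\ref{lemma:Ftilderankconstant}, which is why one has to pay the price of assuming $a_0\ge A$ before applying the finitely--generated--of--equal--rank argument.
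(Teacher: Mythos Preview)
Your proof is correct and follows essentially the same approach as the paper, which simply remarks that the corollary ``follows the same proof as Lemma~\ref{lemma:torsionquotient}'' with the constant-rank input now supplied by Lemma~\ref{lemma:Ftilderankconstant} in place of Theorem~\ref{thm:constantrank}. Your version is in fact slightly more explicit, spelling out the appeal to Mordell--Weil for finite generation and the harmless enlargement $a_0\mapsto\max(a_0,A)$.
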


\begin{lemma} \label{lemma:verticalrohrlich}
    Fix $n\in\mathbb{N}$. There exists a positive integer $a_0$ such that \[E_q \left(F_{\infty,n}\right) = E_q\left(F_{a_0,n}\right).\] 
\end{lemma}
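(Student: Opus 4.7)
The plan is to adapt the Rohrlich-style descent used in the proof of Theorem~\ref{thm:curvecompositum} for finite $a$, replacing the horizontal tower (in $n$) with the vertical tower (in $a$). First I would fix $A$ as in Lemma~\ref{lemma:Ftilderankconstant}; by Corollary~\ref{coro:Ftildetorsionquotient}, the quotient $E_q(F_{\infty,n})/E_q(F_{A,n})$ is torsion. Given $P \in E_q(F_{\infty,n})$, I would pick some multiple $a$ of $A$ with $P \in E_q(F_{a,n})$. The crucial simplification over the finite-$a$ case is that $F_{a,n}/F_{A,n}$ is the base change of $F_{A,n}$ by the constant field extension $\mathbb{F}_{p^a}/\mathbb{F}_{p^A}$, hence is automatically cyclic Galois with group $\operatorname{Gal}(\mathbb{F}_{p^a}/\mathbb{F}_{p^A})$; no Galois closure trick is required.

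The Rohrlich descent then proceeds along the same lines. Since $mP \in E_q(F_{A,n})$ for some positive integer $m$, any $\sigma \in \operatorname{Gal}(F_{a,n}/F_{A,n})$ satisfies $m(\sigma(P) - P) = 0$, so $\sigma(P) - P$ is torsion and, by Lemma~\ref{lemma:finitetorsion}, 2-torsion. Hence $\sigma(2P) = 2P$ for every such $\sigma$ and, by Galois descent, $2P \in E_q(F_{A,n})$.

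It remains to bound $[F_{A,n}(P) : F_{A,n}]$ uniformly in $P$. Here $P$ is a preimage of $Q := 2P \in E_q(F_{A,n})$ under multiplication by $2$, so its coordinates satisfy an algebraic relation over $F_{A,n}$ coming from the $2$-division polynomial of $E_q$, of degree at most $8$. Crucially, all four $2$-torsion points of $E_q : y^2 = x(x+1)(x+t^q)$, namely $O$, $(0,0)$, $(-1,0)$, $(-t^q,0)$, are already defined over $\mathbb{F}_p(t) \subseteq F_{A,n}$, so this degree bound depends only on the curve, not on $P$. Since $F_{A,n}(P)$ is a subextension of the cyclic constant-field extension $F_{a,n}/F_{A,n}$, it must equal $F_{b,n}$ for some $b$ with $A \mid b$ and $b/A \leq 8$. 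Taking $a_0 := A \cdot \operatorname{lcm}(1,2,\ldots,8)$ then yields $P \in E_q(F_{a_0, n})$ uniformly for every $P \in E_q(F_{\infty,n})$.

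The main obstacle I anticipate is precisely this uniform degree bound, which is what turns the existential statement ``$P \in E_q(F_{a,n})$ for some $a$'' into a single $a_0$ valid for all $P$ simultaneously. As indicated above, this reduces to the $\mathbb{F}_p(t)$-rationality of the full $2$-torsion of $E_q$, a fact that is immediate from the Weierstrass equation.
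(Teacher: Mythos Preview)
Your proposal is correct and follows essentially the same approach as the paper, which simply says to replicate the proof of Theorem~\ref{thm:curvecompositum} with the roles of the subindices swapped, noting that the constant-field extensions $F_{a,n}/F_{A,n}$ are already Galois and that Corollary~\ref{coro:Ftildetorsionquotient} replaces Lemma~\ref{lemma:torsionquotient}. You supply more detail than the paper on the uniform degree bound (via the $2$-division polynomial and the explicit $a_0 = A\cdot\operatorname{lcm}(1,\dots,8)$), whereas the paper just invokes the fact that duplication is given by fixed rational functions; this extra precision is fine and harmless.
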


\begin{proof}
    We replicate the proof of Theorem~\ref{thm:curvecompositum} for finite exponent $a$, swapping the roles of the subindices of $F_{a,n}$. The extensions $F_{a,n} / F_{1,n}$ are already Galois, thus we don't need to move to a larger field; and Corollary~\ref{coro:Ftildetorsionquotient} fills the role of Lemma~\ref{lemma:torsionquotient}.
\end{proof}

\begin{proof}[Proof of Theorem~\ref{thm:curvecompositum} for $a=\infty$]
    By Lemmas~\ref{lemma:Ftilderankconstant} and \ref{coro:Ftildetorsionquotient}, there exists a positive integer $N_0$ such that for $n\ge N_0$ the rank of $E_q\left(F_{\infty,n}\right)$ is constant. As $F_{\infty,\infty} = \bigcup_{n\ge N_0}F_{\infty,n}$ we can replicate the proof of Lemma~\ref{lemma:torsionquotient} to show that the quotient group ${E_q\left(F_{\infty,\infty}\right)}/{E_q\left(F_{\infty,N_0}\right)}$ is torsion.

    We can now follow the same proof as for the finite case owing to the fact that the extensions $F_{\infty,n} / F_{\infty,0}$ are Galois.
\end{proof}

\subsection{Families with increasing finite sets}

The aim of this section is to show the following.

\begin{theorem} \label{thm:powers-henson-like}
    Let $p,r$ be as in the statement of Theorem~\ref{thm:constantrank}, and let $a\in\mathbb{N}^+\cup\{\infty\}$. There exists a parametrised family of definable sets in $\mathbb{F}_{p^a}\left[t^{r^{-\infty}}\right]$ that consists of sets of arbitrarily large finite cardinalities.
\end{theorem}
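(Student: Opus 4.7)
The plan is to derive the required family from the elliptic curve $E_q$ of the preceding subsections. By Theorems~\ref{thm:constantrank} and~\ref{thm:curvecompositum}, the Mordell--Weil group $G := E_q(F_{a,\infty})$ equals $E_q(F_{a,n_0})$ for some $n_0$ and is a finitely generated abelian group of positive rank $C(a) \ge 1$; the analogous statement for $a = \infty$ follows from Lemmas~\ref{lemma:Ftilderankconstant} and~\ref{lemma:verticalrohrlich}. Fix a point $P \in G$ of infinite order. Since $E_q$ is cut out by the Weierstrass equation $y^2 = x(x+1)(x+t^q)$ and its group law is rational over $\mathbb{F}_p(t)$, the group $G$ (realized via coordinates in the fraction field $F_{a,\infty}$, itself interpretable in $\mathbb{F}_{p^a}[t^{r^{-\infty}}]$) and its addition are uniformly first-order interpretable; moreover, for each fixed positive integer $n$ the map $[n]\colon G \to G$ is given by fixed rational functions, so the subgroups $nG$ and the fibres $[n]^{-1}(Q)$ are first-order definable with parameters.

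For each $Q \in G$, consider the ``divisor set''
\[
\mathrm{Div}(Q) := \{R \in G : Q \in \langle R \rangle\} = \{R \in G : \exists n \in \mathbb{Z}_{\ge 1},\ [n]R = Q\}.
\]
Finite generation of $G$ forces $|\mathrm{Div}(Q)| < \infty$ for every $Q$. Moreover, if $Q = [N]P$ for a positive integer $N$, each positive divisor $k$ of $N$ contributes the point $[N/k]P \in \mathrm{Div}(Q)$ (up to finitely many torsion corrections accounted for by Lemma~\ref{lemma:finitetorsion}), so $|\mathrm{Div}(Q)| \ge d(N)$, the number of positive divisors of $N$. Letting $N$ range over factorials $m!$ makes $|\mathrm{Div}(Q)|$ unbounded. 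After choosing, as an additional parameter, a common denominator $d_Q \in \mathbb{F}_{p^a}[t^{r^{-\infty}}]$ that clears the $x$-coordinates of all $R \in \mathrm{Div}(Q)$, the set $\{d_Q \cdot x(R) : R \in \mathrm{Div}(Q)\}$ is a subset of $\mathbb{F}_{p^a}[t^{r^{-\infty}}]$ of the same cardinality, producing the arbitrarily large finite sets demanded by Theorem~\ref{thm:henson}.

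The main obstacle is turning $\mathrm{Div}(Q)$ into a uniformly first-order definable family: the naive definition quantifies over the integer $n$, which is not directly expressible in our structure. My plan to overcome this is to exploit the fact that $G$ has rank uniformly bounded by $D$ (Theorem~\ref{thm:constantrank}). Fixing a finite tuple of generators of $G$ (of bounded size) as additional parameters of the family, the condition $Q \in \langle R \rangle$ reduces, modulo the explicit finite torsion given by Lemma~\ref{lemma:finitetorsion}, to a proportionality-plus-divisibility condition on the coordinate vectors of $R$ and $Q$ in $\mathbb{Z}^{\operatorname{rk} G}$. These coordinate conditions can in turn be expressed as a Boolean combination of membership conditions in the definable subgroups $nG$ for a bounded collection of $n$, using the polynomial form of the maps $[n]$; the delicate point is to arrange this so that the same formula works uniformly in $Q$. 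This coordinatization, together with the uniform definability of the denominator-clearing parameter $d_Q$, is the technical heart of the proof; once it is carried out, Theorem~\ref{thm:powers-henson-like} follows immediately by taking $Q = [m!]P$ for varying $m$.
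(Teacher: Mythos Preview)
Your proposal has a genuine gap at exactly the point you flag as ``the technical heart of the proof'': the uniform first-order definability of $\mathrm{Div}(Q)=\{R\in G:\exists n\ge 1,\ [n]R=Q\}$. The sketched repair cannot succeed. Interpreting $G$ via its Weierstrass coordinates gives you access only to the abelian-group structure $(G,+)$ inside the ring; the ring multiplication of $\mathbb{F}_{p^a}[t^{r^{-\infty}}]$ does not translate into any additional structure on $G$. But in a finitely generated abelian group the relation ``$Q\in\langle R\rangle$'' is \emph{not} first-order definable from $+$ alone: already for $G/\mathrm{tors}\cong\mathbb{Z}$ this is the integer divisibility relation, which is not definable in Presburger arithmetic (whose definable binary relations are Boolean combinations of linear inequalities and fixed-modulus congruences). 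Your proposed device---membership in $nG$ for a \emph{bounded} collection of $n$---encodes only finitely many congruence conditions on the coordinate vectors of $R$ and $Q$, and no finite Boolean combination of such congruences captures ``$r_i$ divides $q_i$'' uniformly in $R,Q$. The bounded rank of $G$ is irrelevant here; the obstruction is already present in rank one. In effect the plan is circular: defining $\mathrm{Div}(Q)$ is essentially defining integer divisibility, which is what the whole construction is meant to produce as output, not consume as input.

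The paper's argument avoids this entirely. Instead of trying to carve out a group-theoretic subset of $E_q(K)$, it takes the one-parameter family $C_\lambda=\{x\in R:\exists y\ (y^2\lambda=x(x+\lambda)(x+\lambda t^q))\}$, which is manifestly uniformly definable by a single polynomial equation. Finiteness of each $C_\lambda$ is not an algebraic-group fact but a Diophantine one: after dehomogenizing, the elements of $C_\lambda$ correspond to points of $E_q(F_{a,n_0})$ whose $x$-coordinate, scaled by a fixed nonzero function, is $\mathscr{S}$-integral, and these are finite by Voloch's theorem on integral points on non-isotrivial elliptic curves over function fields (the $j$-invariant of $E_q$ is nonconstant). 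Unbounded size comes exactly as in your proposal, by clearing denominators of finitely many rational points with a suitable $\lambda$. The moral is that the ``arbitrarily large'' half is easy from positive rank, while the ``finite'' half needs an external finiteness theorem rather than an internal definability trick.
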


We will require two theorems from the bibliography:

\begin{theorem}[\cite{ulmerleg1}, \S 2]
    The $j$-invariant of $E_1$ is given by $\displaystyle\frac{256\left(t^2-t+1\right)^3}{t^2\left(t-1\right)^2}$. In particular, it is nonconstant.
\end{theorem}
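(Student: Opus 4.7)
The plan is a direct computation using the standard formulae for the invariants attached to a Weierstrass model. First I would expand the defining equation of $E_1$ as $y^2 = x^3 + (t+1)x^2 + tx$, so that in Tate's notation the coefficients are $a_1 = a_3 = a_6 = 0$, $a_2 = t+1$, and $a_4 = t$.

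Next I would substitute these into the usual formulae for the auxiliary quantities $b_2, b_4, b_6, b_8$, then compute $c_4 = b_2^2 - 24 b_4$ and the discriminant $\Delta = -b_2^2 b_8 - 8 b_4^3 - 27 b_6^2 + 9 b_2 b_4 b_6$. A short calculation gives $b_2 = 4(t+1)$, $b_4 = 2t$, $b_6 = 0$, $b_8 = -t^2$, from which $c_4 = 16(t^2 - t + 1)$ and $\Delta = 16 t^2 (t+1)^2 - 64 t^3 = 16 t^2 (t-1)^2$. The $j$-invariant is then $j(E_1) = c_4^3/\Delta$, and cancelling one factor of $16$ yields the claimed expression $256(t^2-t+1)^3 / \bigl(t^2(t-1)^2\bigr)$.

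For the final assertion, the polynomials $(t^2-t+1)^3$ and $t^2(t-1)^2$ are coprime in $\mathbb{F}_p[t]$: the roots of $t^2 - t + 1$ over the algebraic closure are primitive sixth roots of unity, which equal neither $0$ nor $1$, even in characteristic $3$ where the polynomial has a double root at $t = 2$. Since $256 = 2^8$ is nonzero in the odd characteristic $p$, and the coprime numerator and denominator have different degrees, $j(E_1)$ is a genuine element of $\mathbb{F}_p(t) \setminus \mathbb{F}_p$. The statement is in essence a textbook Weierstrass-invariant computation; there is no substantive obstacle, only the mild check that the odd-characteristic hypothesis prevents any accidental collapse in the formulae for $c_4$ and $\Delta$.
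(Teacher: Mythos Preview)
Your computation is correct. The paper itself offers no proof of this statement at all: it simply records the formula and cites \cite{ulmerleg1}, \S 2, for the computation. Your direct Weierstrass-invariant calculation is exactly the kind of verification one would expect behind such a citation, and it supplies the details the paper leaves to the reference. The only remark worth adding is that your coprimality argument for nonconstancy is more careful than strictly needed here: since $\Delta = 16t^2(t-1)^2$ is a nonzero polynomial (as $p$ is odd) and $c_4^3$ has strictly larger degree, the quotient $c_4^3/\Delta$ is already visibly nonconstant without checking coprimality.
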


\begin{theorem}[\cite{voloch1990}, Theorem 5.3] \label{thm:voloch}
    Let $K$ be a function field in one variable over a finite field and $S$ a finite set of places of $K$. If $E / K$ is an elliptic curve with nonconstant $j$-invariant and $f\in K(E)$ is a nonconstant function then there are only finitely many points $P\in E(K)$ such that $f(P)$ is $S$-integral.
\end{theorem}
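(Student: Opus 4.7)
The statement is Voloch's positive-characteristic analogue of Siegel's theorem on integral points; my plan is to follow the Silverman-style proof of Siegel's theorem, substituting each characteristic-zero ingredient by its characteristic-$p$ counterpart. Since $E$ has nonconstant $j$-invariant, $E$ is non-isotrivial, so the $K/k$-trace of $E$ (with $k$ the field of constants of $K$) is trivial, and the Lang-N\'eron theorem yields that $E(K)$ is finitely generated. I would equip $E(K)\otimes\mathbb{R}$ with the N\'eron-Tate canonical height $\hat h$, which is positive-definite modulo torsion, and let $h_D$ denote the Weil height attached to the polar divisor $D = (f)_\infty$, using the standard relation $h_D(P) = (\deg D)\hat h(P) + O(1)$.

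The next step is to translate $S$-integrality of $f(P)$ into a statement about $v$-adic approximation. Decomposing $h_D$ into local N\'eron heights $\lambda_{D,v}$, the $S$-integrality of $f(P)$ forces the local heights away from $S$ to vanish up to bounded error, so for large-height points $P$ some $v\in S$ must witness $P$ being $v$-adically extremely close to a pole $T$ of $f$. Assuming for contradiction that there are infinitely many $S$-integral $P_i$ with $\hat h(P_i)\to\infty$, after passing to a subsequence I would use Mordell-Weil to write $P_i = n Q_i + R$ with fixed $R$, fixed $n$ coprime to $p$, and $\hat h(Q_i)\to\infty$. The quasi-parallelogram law for N\'eron functions then yields an approximation estimate: for some $v\in S$ and some fixed algebraic point $T' = n^{-1}(T - R)$, the sequence $Q_i$ approximates $T'$ in the $v$-adic topology at an exponent strictly greater than $2$.

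To close the argument I need a Thue-Siegel-Roth inequality valid in characteristic $p$; Voloch's own diophantine-approximation theorem in positive characteristic supplies exactly this, provided the approximation is separable. Choosing $n$ coprime to $p$ makes multiplication-by-$n$ separable, so Voloch's bound applies and forces only finitely many $Q_i$ to approximate $T'$ to the required accuracy, contradicting the assumption. The main obstacle in this transfer is precisely the Roth-type step: the classical Roth theorem fails in characteristic $p$ because of inseparable Frobenius-type approximations, and the nonconstant $j$-invariant hypothesis is essential both to ensure that Lang-N\'eron applies (ruling out an infinite $K/k$-trace that would swamp any height argument) and to prevent $E$ from being a Frobenius twist of a constant curve, which would collapse the diophantine bound.
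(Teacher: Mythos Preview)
The paper does not give its own proof of this statement: Theorem~\ref{thm:voloch} is quoted verbatim from \cite{voloch1990} as an external input, and is immediately applied (without proof) inside the proof of Theorem~\ref{thm:powers-henson-like}. There is therefore no ``paper's proof'' for you to be compared against.

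That said, your outline is a faithful reconstruction of the standard strategy behind Voloch's result: finite generation of $E(K)$ via Lang--N\'eron (using the nonconstant $j$-invariant to kill the trace), reduction of $S$-integrality to a $v$-adic approximation statement via local N\'eron heights, the multiplication-by-$n$ amplification trick with $n$ coprime to $p$ to preserve separability, and then an appeal to a characteristic-$p$ Roth-type inequality. The one place where your sketch is a bit loose is the final step: you invoke ``Voloch's own diophantine-approximation theorem'' without stating which one or checking its hypotheses, and in positive characteristic this is exactly where the argument is delicate (the naive Roth bound is false, and the correct substitute requires separability and non-isotriviality hypotheses that you gesture at but do not verify). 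For the purposes of this paper, however, none of that matters: the authors treat the theorem as a black box, and so should you.
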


\begin{proof}[Proof of Theorem~\ref{thm:powers-henson-like}]
    First consider the case where $a$ is finite. Let $q$ be as in Theorem~\ref{thm:constantrank}. Write $R = \mathbb{F}_{p^a}\left[t^{r^{-\infty}}\right]$, $K= \mathbb{F}_{p^a}\left(t^{r^{-\infty}}\right)$. By Theorem~\ref{thm:curvecompositum} there exists $n$ such that $E_q(K) = E_q\left(\mathbb{F}_{p^a}\left(t^{r^{-n}}\right) \right)$; write $K' =  \left(\mathbb{F}_{p^a}\left(t^{r^{-n}}\right) \right)$ and $R' = K'\cap R$.

    Let $\varphi(x,y,z) \,:\, y^2z = x(x+z)\left(x+zt^q\right)$. Using this formula we define a family of definable sets in $R$\,:
    \[
        C_{\lambda} = \left\{x\in R\,:\, \exists y\,\left(R\models \varphi(x,y,\lambda)\right) \right\}.
    \]
    We claim that these sets $C_{\lambda}$, as $\lambda$ varies over $R$, are of arbitrarily large finite cardinalities.

    \emph{Each $C_{\lambda}$ is finite:} we first notice that $C_{\lambda}$ is finite for $\lambda = 0$. Fix $\lambda\neq 0$, and let $x_0\in C_{\lambda}$. Let $y_0$ be such that $K\models\varphi(x_0,y_0,\lambda)$, and $P_0=\left(\lambda^{-1}x_0,\lambda^{-1}y_0\right)\in K^2$; by definition of $\varphi$ we see that $P_0\in E_q(K)=E_q(K')$.

    Let $\lambda_0 \in R'$ be a nonzero multiple of $\lambda$ in $R$; let $f\in K'(E_q)$ be given by $f(P) = \lambda_0 x(P)$. Then $f(P_0) = \frac{\lambda_0}{\lambda} x_0\in R$. By Theorem~\ref{thm:voloch}, there are finitely many such points $P_0$; hence there are finitely many possible values of $x_0\in C_{\lambda}$.

    \emph{The sets $C_{\lambda}$ are arbitrarily large:} let $N\in\mathbb{N}$. As the rank of $E_q(K)$ is positive, there exist $N$ distinct points $P_1,\ldots , P_N\in E_q(K) \cap K^2$. For each such point there exists $\lambda_i \in R$ such that $P_i \in \left(\frac{1}{\lambda_i}R\right)^2$. Let $\lambda$ be the product of the $\lambda_i$. Then, for every $i$, $R\models \varphi\left(\lambda x(P_i), \lambda y(P_i), \lambda\right)$, hence $C_{\lambda}$ contains all the $\lambda x(P_i)$ and has cardinality of at least $N$.

    This concludes the proof for $a$ finite. The additional case for $a=\infty$ follows from Theorem~\ref{thm:curvecompositum}, Lemma~\ref{lemma:verticalrohrlich} and the fact that the definable family $C_\lambda$ does not depend on $a$.
\end{proof}

\section{Undecidability of infinite extensions in the language of rings} \label{sec:mainthm}

Throughout the previous sections we have been setting up a proof of the undecidability of the field $\mathbb{F}_{p^a}\left(t^{r^{-\infty}}\right)$ by first showing that the ring $\mathbb{F}_{p^a}\left[t^{r^{-\infty}}\right]$ is definable and then finding a uniformly parametrizable family of sets in this ring that satisfies the conditions of the criterion given in Theorem~\ref{thm:henson}. All of this in the language of rings extended with a symbol for the indeterminate $t$.

In this section we will show that this fact does not require this extension of the language:

\begin{theorem} \label{thm:main}
    Let $p$ be an odd prime, $a$ a positive integer and $r$ a prime congruent to $3$ $\pmod{4}$ and such that $p$ is a quadratic residue modulo $r$.

    The first-order theory of the structure $\mathbb{F}_{p^a}\left(t^{r^{-\infty}}\right)$ in the language of rings $\{0,1,+,\times\}$ interprets the theory R of Tarski, Mostowski and Robinson. In particular, it is undecidable.

\end{theorem}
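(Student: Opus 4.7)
The plan is to combine the results of Sections~\ref{sec:henson}, \ref{sec:sasha} and \ref{sec:ulmer}; the only remaining task is to eliminate the dependence on $t$ as a distinguished parameter. Set $K := \mathbb{F}_{p^a}(t^{r^{-\infty}})$ and $\mathcal{R} := \mathbb{F}_{p^a}[t^{r^{-\infty}}]$. By Proposition~\ref{prop:sashaapplied}, a formula $\rho(x;t)$ in the pure ring language defines $\mathcal{R}$ inside $K$ with $t$ as a parameter; by Lemma~\ref{divisibilidades}, $\mathcal{R}$ satisfies the algebraic hypotheses of Theorem~\ref{thm:henson}; and by Theorem~\ref{thm:powers-henson-like}, $\mathcal{R}$ carries a uniformly parametrizable family of finite subsets of arbitrarily large cardinality. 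Corollary~\ref{coro:hensonapplied} then delivers an interpretation of the theory $R$ in the ring $\mathcal{R}$, which transfers along $\rho$ to an interpretation of $R$ in the field $K$ where $t$ enters only as a parameter.

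To replace $t$ by a variable, collect the dependence of the above interpretation on $t$ into a parameter-free formula $\pi(y)$ in the ring language expressing the properties of $t$ that are actually used: namely, that $y$ is transcendental over $\mathbb{F}_{p^a}$ (first-order, via $y^{p^a} \neq y$; this is the hypothesis required to apply Voloch's Theorem~\ref{thm:voloch} and ensure finiteness of the sets in the family), and that the elliptic curve $v^2 = u(u+1)(u + y^q)$ admits a $K$-rational non-torsion point (first-order, and enough for the scaling argument in the proof of Theorem~\ref{thm:powers-henson-like} to produce sets of arbitrarily large finite cardinality). By Section~\ref{sec:ulmer}, $K \models \pi(t)$, so the $\emptyset$-definable set $\Pi := \{s\in K \,:\, K\models \pi(s)\}$ is nonempty. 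The construction performed with any $s \in \Pi$ in place of $t$ still yields an interpretation of $R$ in $K$ with $s$ as parameter. Applying the standard parameter trick—if $\mathrm{Th}(K)$ were decidable, then for every sentence $\sigma$ of $R$ the first-order $K$-sentence $\forall y\,(\pi(y) \to \sigma^*(y))$ (where $\sigma^*$ denotes the translation of $\sigma$ through the interpretation) would be decidable, yet the set of all $\sigma$ for which this sentence is true in $K$ is a consistent extension of the essentially undecidable theory $R$—gives undecidability of $\mathrm{Th}(K)$ in the pure ring language, together with the interpretability of $R$ as claimed.

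The principal obstacle is uniformity: verifying that for every $s \in \Pi$ the construction actually produces a model of $R$, not merely for $s=t$. The ring analogue of $\mathcal{R}$ obtained with $s$ replacing $t$ is still a subring of $\mathcal{O}_{K^{\rm sep}}$ containing $\mathbb{F}_{p^a}[s]$, so Lemma~\ref{divisibilidades} continues to apply. Finiteness of each $C_\lambda$ is immediate from the transcendence clause of $\pi$ combined with Voloch's theorem. The delicate point is unboundedness of the cardinalities: it must be extracted from the mere existence of a $K$-rational non-torsion point rather than from the full tower-uniform rank bound of Theorem~\ref{thm:constantrank}, whose character-theoretic proof was tailored to $s=t$. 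This is feasible because, in the scaling argument closing the proof of Theorem~\ref{thm:powers-henson-like}, the cardinality of $C_\lambda$ is driven by the number of distinct $x$-coordinates of rational points whose denominators divide $\lambda$, and a single non-torsion point $P$ produces infinitely many such coordinates through its integer multiples; the tower-constancy of rank was only used as one convenient way to exhibit such a $P$ in the first place.
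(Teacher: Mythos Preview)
Your proposal has a genuine gap at the finiteness step, and a secondary gap concerning the ring $R_s$.

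\textbf{Finiteness via Voloch.} Theorem~\ref{thm:voloch} is stated for a function field in one variable over a finite field. The field $K = \mathbb{F}_{p^a}\bigl(t^{r^{-\infty}}\bigr)$ is not such a field: it is an infinite extension of $\mathbb{F}_{p^a}(t)$. In the paper's proof of Theorem~\ref{thm:powers-henson-like}, this is handled by first invoking Theorem~\ref{thm:curvecompositum} to obtain $E_q(K) = E_q(K')$ for a global field $K' = F_{a,n_0}$, and only then applying Voloch over $K'$. That collapse to a global field rests on the constant-rank result Theorem~\ref{thm:constantrank}, whose character-theoretic proof you rightly flagged as ``tailored to $s=t$''. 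For an arbitrary $s \in \Pi$ there is no reason the Mordell--Weil group of $v^2 = u(u+1)(u+s^q)$ over $K$ should coincide with the group over some global subfield, so Voloch does not apply and each $C_\lambda$ may well be infinite. If every nonzero $C_\lambda$ is infinite, the family fails the hypothesis of Theorem~\ref{thm:henson}: one needs sets of arbitrarily large \emph{finite} cardinality, and your scaling argument only produces a lower bound $|C_\lambda|\ge N$, not finiteness. Thus the rank-constancy work of Section~\ref{sec:ulmer} is not, as you suggest, merely ``one convenient way'' to exhibit a non-torsion point; it is what makes the finiteness half of the argument go through at all.

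\textbf{The ring $R_s$.} You assert that the set $\{x : K\models\rho(x;s)\}$ is ``still a subring of $\mathcal{O}_{K^{\rm sep}}$ containing $\mathbb{F}_{p^a}[s]$''. This is not justified. The formula coming out of Proposition~\ref{prop:sashaapplied} is a specific norm-form construction with parameters $d=t$, $w=1/t$, etc., chosen so that the norm equations carve out exactly the integral closure of $\mathbb{F}_{p^a}[t]$; substituting a generic transcendental $s$ into those parameter slots can define an arbitrary subset of $K$. At a minimum you would need to include in $\pi$ the first-order conditions that $R_s$ is a ring containing $s$ and the constants with no further units---but even then $R_s$ need not lie inside $\mathcal{O}_{K^{\rm sep}}$, so Lemma~\ref{divisibilidades} does not obviously apply.

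The paper's proof sidesteps both issues by cutting the parameter set down, via the conditions (i)--(vii) and the refinements $\mathcal{T}'$, $\mathcal{T}''$, $\mathcal{T}_0$, until the only surviving rings $R_y$ are literally $\mathcal{O}=\mathbb{F}_{p^a}\bigl[t^{r^{-\infty}}\bigr]$ and its mirror $\mathcal{O}_-$. At that point the interpretation with parameter $y$ is, up to the isomorphism $t \mapsto t^{-1}$, the interpretation with parameter $t$, so the results of Sections~\ref{sec:sasha} and~\ref{sec:ulmer} transfer verbatim without needing to be re-established uniformly in $s$.
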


\begin{remark}
    For any given $p$, there are infinitely many such $r$. More specifically, by the prime number theorem on arithmetic progressions the Dirichlet density of the set of such $r$ is $\frac{1}{4}$.
\end{remark}

We will require a technical lemma:

\begin{lemma} \label{lemma:polynomialinpowers}
    Let $F$ be a field and $n\ge 1$ be an integer. For every polynomial $p\in F[t]$ there exists a nonzero $\bar{p}\in F[t]$ such that $p\bar{p}\in F\left[t^n\right]$.
\end{lemma}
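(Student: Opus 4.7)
The plan is to first handle the case when $n$ is coprime to $\operatorname{char}(F)$ by a roots-of-unity construction, and then reduce the general case to this one using the Frobenius endomorphism. To avoid a clash with the paper's use of $p$ for the characteristic of the ground field in surrounding sections, I write $f$ for the polynomial denoted $p$ in the statement, and set $\ell := \operatorname{char}(F)$ when that characteristic is positive.

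First I would treat the case $\gcd(n, \ell) = 1$ (which is vacuous in characteristic zero). Fix a primitive $n$-th root of unity $\zeta$ in an algebraic closure $\overline F$ and define $\bar f(t) := \prod_{i=1}^{n-1} f(\zeta^i t) \in \overline F[t]$, so that $f(t)\bar f(t) = \prod_{i=0}^{n-1} f(\zeta^i t)$. This product is visibly invariant under the substitution $t \mapsto \zeta t$, so as a polynomial in $\overline F[t]$ it involves only powers of $t^n$. It is also invariant under the natural action of $\operatorname{Gal}(F(\zeta)/F) \le (\mathbb{Z}/n\mathbb{Z})^\times$, which merely permutes the set $\{\zeta^0, \zeta^1, \ldots, \zeta^{n-1}\}$, so its coefficients lie in $F$. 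Hence $f\bar f \in F[t^n]$. Finally, $\bar f = (f\bar f)/f$ lies in $F(t) \cap \overline F[t]$, which equals $F[t]$ by uniqueness of Euclidean division over $F[t]$, so $\bar f \in F[t]$ as required.

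For the general case with $\ell > 0$ dividing $n$, write $n = \ell^k m$ with $\gcd(m, \ell) = 1$. Applying the previous step with exponent $m$ produces $\bar f_0 \in F[t]$ such that $g := f \bar f_0 \in F[t^m]$. Since Frobenius $x \mapsto x^\ell$ is a ring endomorphism in characteristic $\ell$, raising any element of $F[t^m]$ to the $\ell^k$-th power lands in $F[t^{m \ell^k}] = F[t^n]$ (the coefficients are simply $\ell^k$-th powers and $t^m$ becomes $t^n$). Setting $\bar f := \bar f_0 \cdot g^{\ell^k - 1}$ therefore yields $f \bar f = g^{\ell^k} \in F[t^n]$, and $\bar f$ is a nonzero element of $F[t]$ because $F[t]$ is a domain and $f, \bar f_0$ are nonzero.

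The only mildly delicate point is confirming that $\bar f$ has coefficients in $F$ rather than merely in $\overline F$ during the first step; this is dispatched either by the Galois-invariance observation above, or by the elementary identity $F(t) \cap \overline F[t] = F[t]$. Neither is difficult, but both require a short justification; once past that, the construction is entirely formal.
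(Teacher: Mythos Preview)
Your proof is correct. The paper takes a different and more uniform route: given the roots $a_1,\dots,a_k$ of $f$ (with multiplicity) in a splitting field, it forms $\prod_i q_i(t^n)$ where $q_i\in F[t]$ is the minimal polynomial of $a_i^n$ over $F$; since each $q_i(t^n)$ vanishes at $a_i$, this product is a multiple of $f$ lying in $F[t^n]$. That argument works in every characteristic at once, with no case split and no roots of unity. Your norm-type construction $\prod_{i=0}^{n-1} f(\zeta^i t)$ is more explicit (no minimal polynomials to compute) and makes the invariance under $t\mapsto \zeta t$ transparent, at the cost of needing a separate Frobenius reduction when $\ell\mid n$ and a short Galois/descent check that the coefficients land in $F$. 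Either approach is adequate for this elementary lemma; the paper's is a bit slicker, yours a bit more hands-on. (A trivial edge case neither proof mentions: if $f=0$ one simply takes $\bar f=1$.)
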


\begin{proof}
    Let $a_1,\dots ,a_k$ be the roots (with repetitions) of $p$ in a splitting field $\overline{F}$. For each $i$, let $q_i(t)\in F[t]$ be the minimal polynomial of $a_i^n$ over the extension $\overline{F}/F$. Then $\prod_i q_i(t^n)$ is a multiple of $p$.
\end{proof}

\begin{proof}[Proof of Theorem~\ref{thm:main}]
    Write $\mathcal{K}=\mathbb{F}_{p^a}\left(t^{r^{-\infty}}\right)$,  $\mathcal{O}=\mathbb{F}_{p^a}\left[t^{r^{-\infty}}\right]$ and $\mathcal{O}_-=\mathbb{F}_{p^a}\left[t^{-r^{-\infty}}\right]$.

    By Theorem~\ref{thm:sasha}, there exists a first-order formula with $t$ as a parameter that defines $\mathcal{O}$ in $\mathcal{K}$. Substituting a variable $y$ at every instance of $t$ in this formula, we obtain a formula $\phi(x,y)$ in the language of rings such that $\mathcal{O}=\{x\,:\,\mathcal{K}\models \phi(x,t)\}$.

    Write $R_y=\{x\,:\,\mathcal{K}\models \phi(x,y)\}$. We define the set $\mathcal{T}$ in $\mathcal{K}$ as the set of all $y\in \mathcal{K}$ that satisfy the following conditions:
    \begin{enumerate}[(i)]
        \item \label{condi:nonzero}$y\neq 0$;
        \item \label{condi:ring}$R_y$ is a ring;
        \item \label{condi:overFq}$R_y$ contains every solution of $x^{p^a}-x=0$ in $\mathcal{K}$;
        \item \label{condi:noextraunits}every unit of $R_y$ is a solution of $x^{p^a}-x=0$;
        \item \label{condi:containsy}$y\in R_y$;
        \item \label{condi:ynonunit}$y$ is not a unit of $R_y$;
        \item \label{condi:yhasrtower}for every $z\in R_y$ that divides $y$ in $R_y$, there exists a unit $u$ of $R_y$ such that $uz$ is a $r$-th power in $R_y$.
    \end{enumerate}

    It is clear that $\mathcal{T}$ is definable in the corresponding languages and that $t\in\mathcal{T}$. Moreover, as the map $t\mapsto t^{-1}$ is an isomorphism between $\left(\mathcal{K};0,1,+,\times, t\right)$ and $\left(\mathcal{K};0,1,+,\times, t^{-1}\right)$ that maps $\mathcal{O}$ to $\mathcal{O}_-$, then $t^{-1}\in\mathcal{T}$ with $R_{t^{-1}}=\mathcal{O}_-$.

    Given $y\in\mathcal{T}$, we note the following: by condition (\ref{condi:overFq}), $R_y$ contains the constants in $\mathcal{K}$; by conditions (\ref{condi:nonzero}), (\ref{condi:noextraunits}) and (\ref{condi:ynonunit}), $y$ is not a constant, and from conditions (\ref{condi:ring}) and (\ref{condi:containsy}) we get that $\mathbb{F}_{p^a}[y]\subseteq R_y$. Finally, from condition (\ref{condi:yhasrtower}) we conclude that $\mathbb{F}_{p^a}[y^{r^{-n}}]\subseteq R_y$ for every $n$, hence $y$ must be associated to a power of $t$.

    Thus $\mathcal{T}\subseteq \left\{ut^n\,:\,u\in \mathbb{F}_{p^a}^\times\,,\,n\in\mathbb{Z}\left[\frac{1}{r}\right]\setminus\{0\}\right\}$.

    We aim to distinguish $\mathcal{O}$ from within the family $\{R_y\}_{y\in\mathcal{T}}$. In order to do so, we first define the subset $\mathcal{T}'$ of $\mathcal{T}$ as the set of all $y\in\mathcal{T}$ that satisfy the following condition:
    \begin{itemize}
        \item for every $z\in\mathcal{T}$ such that $z\in R_y$ and every $w\in R_y$ that divides $z$ in $R_y$ there exists a unit $u$ of $R_y$ such that $uw$ is a $r$-th power in $R_y$.
    \end{itemize}

    Clearly $\{t,t^{-1}\}\subset\mathcal{T}'$. We now specialize further by defining a subset $\mathcal{T}''$ of $\mathcal{T}'$ as the set of all $y\in\mathcal{T}'$ that satisfy the following condition:

    \begin{itemize}
        \item for every $z\in\mathcal{T}'$ such that $y\in R_z$ and for every $w_1,w_2\in R_y\cap R_z$, if $w_1$ divides $w_2$ in $R_y$ then this divisibility also holds in $R_z$.
    \end{itemize}

    We note that $t\in\mathcal{T}''$: indeed, if $z\in\mathcal{T}'$ is such that $t\in R_z$, then every positive power of $t$ belongs to $R_z$ by the definition of $\mathcal{T}'$; and if $w_1,w_2\in R_t=\mathcal{O}$ are such that $w_1$ divides $w_2$ in $\mathcal{O}$, then $\frac{w_2}{w_1}\in\mathcal{O}$ is a linear combination of positive powers of $t$, hence $\frac{w_2}{w_1}$ in $R_z$.

    We note further that for every $y\in \mathcal{T}''$ either $R_y\in\mathcal{O}$ or $R_y\in\mathcal{O}_-$. Fix $y\in\mathcal{T}''$. Without loss of generality, assume that $y$ is associate to a positive power of $t$, hence $y\in R_t=\mathcal{O}$. Suppose there exists $\frac{w_2}{w_1}\in R_y\setminus\mathcal{O}$, with $w_1,w_2\in\mathcal{O}$. As every map of the form $t\mapsto t^{r^n}$ with $n\in\mathbb{Z}$ is an automorphism of $\mathcal{K}$, we may assume that $w_1,w_2$ are polynomials in $t$ and $y$ is associate to a positive integer power of $t$, $y=ut^n$.

    Thus $\mathbb{F}_{p^a}[y]=\mathbb{F}_{p^a}[t^n]\subseteq R_y$; by Lemma~\ref{lemma:polynomialinpowers} there exists $\overline{w_1}\in\mathbb{F}_{p^a}[t]$ such that $w_1\overline{w_1}\in\mathbb{F}_{p^a}[t^n]\subseteq R_y$. As $w_2\overline{w_1}=\frac{w_2}{w_1}w_1\overline{w_1}\in R_y$, then both $w_1\overline{w_1}$ and $w_2\overline{w_1}$ are in $R_y\cap R_t$. As $\frac{w_2}{w_1}=\frac{w_2\overline{w_1}}{w_1\overline{w_1}}\in R_y$, by definition of $\mathcal{T}''$ we conclude that $\frac{w_2}{w_1}\in R_t$ and thus $R_y\subseteq R_t=\mathcal{O}$.

    Similarly, if $y$ is associated to a negative power of $t$ then $R_y\subseteq \mathcal{O}_-$.

    Thus $\mathcal{O}$ and $\mathcal{O}_-$ are the only two maximal elements of $\{R_y\}_{y\in\mathcal{T}''}$. We define $\mathcal{T}_0$ as the set of all $y\in\mathcal{T}''$ such that
    \begin{itemize}
        \item for every $z\in\mathcal{T}''$, if $R_z\subseteq R_y$ then $R_z=R_y$.
    \end{itemize}

    And we have $\{R_y\}_{y\in\mathcal{T}_0}=\{\mathcal{O},\mathcal{O}_-\}$. Note that $\mathcal{O}$ and $\mathcal{O}_-$ are isomorphic as rings via the map $t\mapsto t^{-1}$, so this refinement of $\mathcal{T}$ cannot go any further.
    
    As $\mathcal{O}$ and $\mathcal{O}_-$ are isomorphic as rings, any formula $F$ in the language of rings with variables $x_1,\dots x_n$ we wish to interpret over $\mathcal{O}$ can be equivalently stated over $\mathcal{K}$ by \[\exists y\in\mathcal{T}_0\,\left(x_1\in R_y\,\wedge\,\ldots \,\wedge\,x_n\in R_y\,\wedge\,F\right).\]

    Finally, by Theorem~\ref{thm:powers-henson-like} there exists a parametrised family of definable sets of $\mathcal{O}$, defined in the language $\{0,1,+,\times ,t\}$, that consists of sets of arbitrarily large finite cardinalities. If we pick a new variable and substitute it into the definition of this family at every instance of $t$, we obtain a definable family of $\mathcal{O}$ in the language of rings that contains sets of arbitrarily large cardinalities; by Corollary~\ref{coro:hensonapplied} we conclude that the first-order theory of $\mathcal{O}$ in the language of rings is undecidable; hence this is also the case for the first-order theory of $\mathcal{K}$ in the languages described in this theorem.
\end{proof}

\begin{remark}
    Following on Lemma~\ref{lemma:verticalrohrlich} and the proof of Proposition~\ref{prop:sashaapplied}, the proof of Theorem~\ref{thm:main} can be extended to structures of the form $K\left(t^{r^{-\infty}}\right)$, where $K$ is an infinite algebraic extension of $\mathbb{F}_p$ with the following restriction: there exists an integer $l$ coprime to the characteristic such that $K$ contains a non $l$-th power.

    In this case, the language has to be extended to $\{0,1,+,\times ,K\}$.
\end{remark}

\begin{remark}
    The interpretation of the theory R in Theorem~\ref{thm:main} can be done uniformly over all finite extensions of $\mathbb{F}_p$ by working in the language $\{0,1,+,\times, C\}$, where $C$ is interpreted as the unary predicate ``is a constant''.

    This requires removing the power from the definition of the elliptic curve in the proof of Theorem~\ref{thm:powers-henson-like}, but this is fixed by the steps taken to define $\mathcal{T}'$ in the proof of Theorem~\ref{thm:main}: we will be able to conclude that $\{t^q,t^{-q}\}\in\mathcal{T}'$.
\end{remark}

\addcontentsline{toc}{section}{References}

\printbibliography
\noindent Carlos A. Mart\'inez-Ranero\\
Email: cmartinezr@udec.cl\\
Homepage: www2.udec.cl/~cmartinezr\\
\noindent Dubraska Salcedo\\
Email: dsalcedo@udec.cl\\
\noindent Javier Utreras\\
Email: javierutreras@udec.cl \hspace{10pt} javutreras@gmail.com\\

\noindent Same address: \\
Universidad de Concepci\'on, Concepci\'on, Chile\\
Facultad de Ciencias F\'isicas y Matem\'aticas\\
Departamento de Matem\'atica\\
\end{document}